\documentclass[11pt,reqno]{amsart}
\usepackage[numbers, square]{natbib}
\usepackage{mathptmx}
\usepackage{amsmath}
\usepackage{amscd}
\usepackage{amssymb}
\usepackage{amsthm}
\usepackage{enumerate}
\usepackage{xspace}
\usepackage[all,tips]{xy}
\usepackage[dvips]{graphicx}
\usepackage{verbatim}
\usepackage{syntonly}
\usepackage{hyperref}
\usepackage{amsmath, amsthm, graphics, amssymb,fullpage,color, epsfig,url}
\usepackage{indentfirst}
\usepackage{color}

\newtheorem{thm}{Theorem}
\newtheorem{lemma}{Lemma}
\newtheorem{defn}{Definition}
\newtheorem{rem}{Remark}
\newtheorem{cor}{Corollary}
\newtheorem{prop}{Proposition} 

\newcommand{\disp}{\displaystyle}
\DeclareMathOperator{\dist}{dist}
\DeclareMathOperator{\supp}{supp}
\DeclareMathOperator{\dv}{div}
\DeclareMathOperator{\Vol}{Vol}
%
%
\newcommand{\eps}{\varepsilon}
\newcommand{\vp}{\varphi}
%
%
\newcommand{\al}{\alpha}
\newcommand{\be}{\beta}
\newcommand{\ga}{\gamma}
\newcommand{\de}{\delta}
\newcommand{\Ga}{\Gamma}
\newcommand{\Si}{\Sigma}
\newcommand{\te}{\theta}
\newcommand{\La}{\Lambda}
\newcommand{\Om}{\Omega}
\newcommand{\si}{\sigma}
%
%
%
\newcommand{\iny}{\infty}
\newcommand{\del}{ \partial}
\newcommand{\su}{\subset}
\newcommand{\LP}{\Delta}
\newcommand{\gr}{\nabla}
%
%
\newcommand{\norm}[1]{\left\vert \left\vert #1\right\vert\right\vert}
\newcommand{\innp}[1]{\left< #1 \right>}
\newcommand{\abs}[1]{\left\vert#1\right\vert}
\newcommand{\set}[1]{\left\{#1\right\}}
\newcommand{\brac}[1]{\left[#1\right]}
\newcommand{\pr}[1]{\left( #1 \right) }
\newcommand{\der}[2]{\frac{\del #1}{\del #2} }
\newcommand{\dert}[2]{\frac{\del^2 #1}{\del #2 ^2} }
\newcommand{\derm}[3]{\frac{\del^2 #1}{\del #2 \del #3} }
\renewcommand{\d}[1]{\ensuremath{\operatorname{d}\!{#1}}}
%
%
%
\newcommand{\N}{\ensuremath{\mathbb{N}}}
\newcommand{\R}{\ensuremath{\mathbb{R}}}
\newcommand{\Z}{\ensuremath{\mathbb{Z}}}

%
\begin{document} 

\title{Parabolic theory as a high-dimensional limit of elliptic theory}

\author[Davey]{Blair Davey}
\address{Department of Mathematics, City College of New York CUNY, New York, NY 10031, USA}
\email{bdavey@ccny.cuny.edu}


\maketitle

\begin{abstract}
The aim of this article is to show how certain parabolic theorems follow from their elliptic counterparts.
This technique is demonstrated through new proofs of five important theorems in parabolic unique continuation and the regularity theory of parabolic equations and geometric flows.
Specifically, we give new proofs of an $L^2$ Carleman estimate for the heat operator, and the monotonicity formulas for the frequency function associated to the heat operator, the two-phase free boundary problem, the flow of harmonic maps, and the mean curvature flow.
The proofs rely only on the underlying elliptic theorems and limiting procedures belonging essentially to probability theory.
In particular, each parabolic theorem is proved by taking a high-dimensional limit of the related elliptic result. \\

\thanks{{\bf Keywords:} elliptic theory \and parabolic theory \and high-dimensional limit.} 

\thanks{{\bf Mathematics Subject Classification:} 35J15 \and 35K10}

\end{abstract}

\section{Introduction}
\label{Intro}

Experts have long realized the parallels between elliptic and parabolic theory of partial differential equations.
It is well-known that elliptic theory may be considered a static, or steady-state, version of parabolic theory.  
And in particular, if a parabolic estimate holds, then by eliminating the time parameter, one immediately arrives at the underlying elliptic statement.  
Producing a parabolic statement from an elliptic statement is not as straightforward.  
In this article, we demonstrate a method for producing parabolic theorems from their elliptic analogues.  
Specifically, we show that certain parabolic estimates may be obtained by taking high-dimensional limits of the corresponding elliptic result.

The idea to consider parabolic theory as a high-dimensional limit of elliptic theory was used by Perelman as a motivation for introducing what is now known as the {\em Perelman reduced volume}, \cite{P02} Section 6.
The methods of the proof, as well as the general philosophy that parabolic theory is a high-dimensional limit of elliptic theory, are discussed in the blog of Tao, \cite{TaoB}.
Our set-up will be simpler than that of the Ricci flow, and we will be able to use a form of classical probabilistic formulae, essentially going back to Wiener \cite{W23}, with a slight modification used by Sverak in \cite{Sv11}.

The method of obtaining parabolic theorems by taking high-dimensional limits is demonstrated through five new proofs.
The first is a proof of an $L^2$ Carleman estimate for the operator $\LP + \del_t$.
This Carleman estimate was proved by Escauriaza in \cite{E00} and Tataru in \cite{Ta99}, with further analysis by Koch and Tataru in \cite{KT01}.
The second new proof, originally proved by Poon in \cite{P96}, shows that the frequency function associated to the heat equation is monotonically non-decreasing.
These two theorems, motived by their elliptic counterparts, allowed the authors of \cite{E00}, \cite{P96} and \cite{Ta99} to use the established techniques for elliptic theory to prove that strong unique continuation also holds for solutions to the heat equation.
This was a major step forward in the theory of unique continuation for parabolic equations.
The third new proof is of a monotonicity formula for two-phase free boundary parabolic problems.
This formula was proved in \cite{Caf93} by Caffarelli, and extended by Caffarelli and Kenig in \cite{CK98} to prove regularity of solutions to parabolic equations and their singular perturbations.
The fourth new proof in this article is of a monotonicity formula for the flow of harmonic maps.
The original proof is due to Struwe, \cite{St88} (and many other proofs since).
And the fifth new proof is of a monotonicity formula for mean curvature flow, which was proved by Huisken in \cite{Hu90}.
These two theorems were crucial in the development of regularity theory for geometric flows.
The parabolic theorems mentioned here were discovered independently, but we show that they in fact follow from their elliptic counterparts in a common way.
The starting point of each new proof is a classical formula used in probability together with a related calculation from \cite{Sv11}.

The author hopes that the techniques presented in this article may find other applications.
In particular, if a certain elliptic result is known to hold in every dimension, then it may be possible to prove the corresponding parabolic result using the ideas presented here.

The article is organized as follows.  
In Section \ref{MT}, we develop the connection between the elliptic and parabolic theory by presenting Wiener's calculation from \cite{W23} and its variant presented by Sverak in \cite{Sv11}.  
Section \ref{PG} contains a collection of calculations and statements that will be referred to throughout the article as well as some more detailed remarks on the proof philosophy.
The $L^2$ parabolic Carleman estimate is proved in Section \ref{CE}.
The frequency function theorem for the heat operator is presented in Section \ref{FF}.
Section \ref{FBP} contains the monotonicity formulae for two-phase free boundary problems.
In Section \ref{HM}, harmonic maps are introduced and the monotonicity formula is stated and proved.
The results for minimal surfaces and mean curvature flow are given in Section \ref{MM}.

\section{Measure Theoretic Details}
\label{MT}

Within this section, we establish the two main tools of this article, Lemmas \ref{PFTS} and \ref{PFT}.  
In all subsequent sections, these lemmas allow us to pass from a known elliptic notion to the corresponding parabolic result.

We start with some classical ideas concerning random walks, going back to Wiener \cite{W23}.
An explanation of these standard ideas is also available in Sverak's notes \cite{Sv11}.
Consider $d$ particles, each one moving randomly in one spatial dimension.  
Let $x_1, x_2, \ldots, x_d$ denote the coordinates of these particles.
Rather than imposing a condition on the step size, we instead require the more universal condition that if each $x_i$ makes $n$ random steps, denoted $y_{i,1}, y_{i,2}, \ldots, y_{i,n}$,  then for some fixed $t > 0$
\begin{equation}
\abs{y}^2 = \sum_{i=1}^d \brac{y_{i,1}^2 + \ldots y_{i,n}^2} = 2 d t.
\label{yNorm}
\end{equation}
Assuming that each $x_i$ starts at the origin, after these $n$ steps, the new positions will be
\begin{equation}
x_i = y_{i,1} + y_{i,2} + \ldots + y_{i,n}.
\label{xiDef}
\end{equation}
To understand the probability law for the events $\pr{y_{1,1}, \ldots, y_{1,n}, \ldots, y_{d,1}, \ldots, y_{d,n}}$, assume that the vectors $\pr{y_{1,1}, \ldots, y_{1,n}, \ldots, y_{d,1}, \ldots, y_{d,n}}$ are distributed over the $n\cdot d-1$ dimensional sphere of radius $\sqrt{2 d t}$ uniformly with respect to the canonical surface measure.  
If the surface measure is normalized to have total measure equal to $1$, then this surface measure, $\mu_{n,d}^t$, is given by
$$\mu_{n,d}^t = \frac{1}{\abs{S^{n \cdot d-1}} \pr{2 d t}^{\frac{n \cdot d-1}{2}}} \si_{n \cdot d-1}^t,$$
where $ \si_{n \cdot d-1}^t$ denotes the canonical surface measure of the sphere described by equation \eqref{yNorm}. \\

Define a function
$$f_{n,d} : \R^{n \cdot d} \to \R^d$$
by
\begin{align}
& f_{n,d}\pr{y_{1,1}, \ldots, y_{1,n}, \ldots, y_{d,1}, \ldots, y_{d,n}} \nonumber \\
&= \pr{y_{1,1} + \ldots + y_{1,n}, \ldots, y_{d,1} + \ldots + y_{d,n}}.
\label{fndDef}
\end{align}
In other words, for each $i = 1, \ldots, d$, equation \eqref{xiDef} holds.

We need to compute the push-forward of $\mu_{n,d}^t$ by $f_{n,d}$, denoted by $\nu_{n,d}^t = f_{n,d \#} \mu_{n,d}^t$.  
For simplicity, we may replace $f_{n,d}$ above with
\begin{align*}
& \tilde f_{n,d}\pr{y_{1,1}, y_{1,2}, \ldots, y_{1,n}, \ldots, y_{d,1}, y_{d,2}, \ldots, y_{d,n}} \\
&= \pr{\sqrt{n}\; y_{1,1}, \sqrt{n} \; y_{2,1}, \ldots, \sqrt{n} \; y_{d,1}},
\end{align*}
since the two maps are related by an orthogonal transformation that leaves the measure unchanged.  Therefore, we write $x_i = \sqrt{n} \; y_{i,1}$ in what follows.  
The push-forward is computed in two steps.  
First, we push-forward the measure $\mu_{n,d}^t$ by the projection 
$$\pr{y_{1,1}, y_{1,2}, \ldots, y_{1,n}, \ldots, y_{d,1}, y_{d,2}, \ldots, y_{d,n}} \mapsto  \pr{y_{1,1}, y_{2,1}, \ldots, y_{d,1}},$$ 
then we dilate by a factor of $\sqrt{n}$.
A computation shows that the projection gives
$$ \frac{1 }{\abs{S^{n \cdot d-1}} \pr{2 d t}^{\frac{n \cdot d-1}{2}}}  \abs{S^{n \cdot d-1 - d}} \pr{2 d t}^{\frac{n \cdot d-1 - d}{2}}\pr{1 - \frac{y_{1,1}^2 + \ldots + y_{d,1}^2}{2 d t}}^{\frac{n \cdot d - d - 2}{2}} \d{y},$$
where $\d{y} = \d{y}_{1,1} \ldots \d{y}_{d,1}.$
Using $x_i = \sqrt{n} \; y_{i,1}$, we see that
\begin{align*}
\nu_{n,d}^t 
&=  \frac{\abs{S^{n \cdot d-1 - d}} }{\abs{S^{n \cdot d-1}} \pr{2 n d t}^{\frac{d}{2}}} \pr{1 - \frac{x_1^2 + \ldots + x_{d}^2}{2 n d t}}^{\frac{n \cdot d - d - 2}{2}} \d{x}_1 \ldots \d{x}_{d} \\
&=  \frac{\abs{S^{n \cdot d-1 - d}} }{\abs{S^{n \cdot d-1}} \pr{2 n d t}^{\frac{d}{2}}} \pr{1 - \frac{\abs{x}^2}{2 n d t}}^{\frac{n \cdot d - d - 2}{2}} \d{x},
\end{align*}
where $\d{x} = \d{x}_1 \d{x}_2 \ldots \d{x}_d$.

The following sets will be used repeatedly throughout the article and are related to one another through the function $f_{n,d}$. 

\begin{defn}
Let $S_t^n$ denote the sphere of radius $\sqrt{2 d t}$ in $\R^{n\cdot d}$,
\begin{equation}
{S}_t^n = \set{y \in \R^{n \cdot d} : \abs{y} = \sqrt{2 d t}}.
\label{StnDefn}
\end{equation} 
Let $B_{nt}$ denote the ball of radius $\sqrt{2 n d t}$ in $\R^{d}$,
\begin{equation}
{B}_{nt} = \set{x \in \R^{d} : \abs{x} \le \sqrt{2 n d t}}.
\label{BntDefn}
\end{equation} 
\end{defn}

\begin{rem} 
At this point, we notice that the expression for $\nu_{n,d}^t $ is not necessarily well-defined when the argument is negative, or when $2 n d t < \abs{x}^2$.
But notice that by \eqref{xiDef}, standard inequalities, and \eqref{yNorm},
\begin{align*}
\abs{x}^2 = \sum_{i=1}^d x_i^2 = \sum_{i=1}^d \pr{y_{i,1} + \ldots + y_{i,n}}^2 \le n \sum_{i=1}^d \pr{y_{i,1}^2 + \ldots + y_{i,n}^2} = n \cdot 2d t.
\end{align*}
Thus, the argument is always non-negative and the expression is well-defined for all $n \in \N$.
In fact, we have that $f_{n,d}\pr{S_t^n} = B_{nt}$ and $\nu_{n,d}^t$ is a measure supported on $B_{nt}$.
\end{rem}

There is a nice relationship between the integrability of a function with respect to the measures $\nu_{n,d}^t$ and a Gaussian measure.

\begin{lemma}
Define 
\begin{align}
& G_{t, n}\pr{x} = G_n\pr{x,t} := \frac{\abs{S^{n \cdot d-1 - d}} }{\abs{S^{n \cdot d-1}} \pr{2 n d t}^{\frac{d}{2}}} \pr{1 - \frac{\abs{x}^2}{2 n d t}}^{\frac{n \cdot d - d - 2}{2}} \chi_{B_{nt}},
\label{GnDefn} \\
&  G_t\pr{x} = G\pr{x,t} := \pr{\frac 1 {4 \pi t}}^{\frac d 2} \exp\pr{- \frac{\abs{x}^2}{4t}},
\label{GDefn}
\end{align}
where $\chi_{B_{nt}}$ is the indicator function of the set $B_{nt}$.
If $\vp : \R^d \to \R$ is integrable with respect to the Gaussian measure $G_t\pr{x} \d{x}$, then $\vp$ is also integrable with respect to $G_{t,n}\pr{x} \d{x}$ for every $n \in \N$. 
\label{intLemma}
\end{lemma}

\begin{proof}
Since 
\begin{align*}
\frac{G_{t,n}\pr{x}}{G_t\pr{x}}
&= \frac{\abs{S^{n \cdot d-1 - d}} }{\abs{S^{n \cdot d-1}} } \pr{\frac{ 4 \pi}{ 2 n d}}^{\frac{d}{2}} \pr{1 - \frac{2}{nd}\frac{\abs{x}^2}{4 t}}^{\frac{n \cdot d - d - 2}{2}}  \exp\pr{ \frac{\abs{x}^2}{4t}} \chi_{B_{nt}}
\end{align*}
and $B_{nt} = \set{ x \in \R^d : \frac{\abs{x}^2}{4t} \le \frac{nd}{2}}$, then this ratio is bounded and positive.
In fact, the maximum occurs whenever $\disp \frac{\abs{x}^2}{4t} = \frac d 2 +1$, so that $\disp \norm{\frac{G_{t,n}\pr{x}}{G_t\pr{x}}}_{L^\iny\pr{\R^d}} = \mathcal{C}_{n,d}$, where
\begin{align*}
\mathcal{C}_{n,d}
&:= \frac{\abs{S^{n \cdot d-1 - d}} }{\abs{S^{n \cdot d-1}} } \pr{\frac{ 4 \pi}{ 2 n d}}^{\frac{d}{2}} \brac{1 - \frac{2}{nd}\pr{\frac d 2 +1}}^{\frac{n \cdot d - d - 2}{2}}  \exp\pr{ \frac d 2 + 1}.
\end{align*}
Therefore,
\begin{align*}
\int_{\R^d} \vp\pr{x} G_{t,n}\pr{x} \d{x}
&
\le \mathcal{C}_{n,d} \int_{\R^d} \vp\pr{x} G_t\pr{x} \d{x}
< \iny,
\end{align*}
since $\vp$ is integrable with respect to $G_t\pr{x} \d{x}$.
\end{proof}

Using the definition of push-forward in combination with Lemma \ref{intLemma}, we arrive at the following classical result.

\begin{lemma}
Let $G_{t,n}\pr{x}$ and $G_t\pr{x}$ be as defined in \eqref{GnDefn} and \eqref{GDefn}, respectively.
If $\vp: \R^d \to \R$ is integrable with respect to $G_{t}\pr{x} \d{x}$, then for every $n \in \N$,
\begin{align*}
& \frac{1}{\abs{S_t^n}} \int_{S^n_t} \vp\pr{f_{n,d}\pr{y}} \si_{n \cdot d-1}^t
= \int_{\R^d} \vp\pr{x} G_{t,n}\pr{x} \d{x}.
\end{align*}
\label{PFTS}
\end{lemma}

Following Sverak in \cite{Sv11}, we now broaden this viewpoint so that $t$ is a parameter instead of a fixed constant.  
That is, we think of the measures $\nu_{n,d}^t$ 
as time slices of a space-time object that comes from projections of some global measure $\mu_{n,d}$ in the space $y\in \R^{n\cdot d}$ (not just the spheres) onto the space-time $\pr{x, t} \in \R^d \times \R_+$ (not just time slices).
To do this, define a function
$$F_{n,d} : \R^{n \cdot d} \to \R^d \times \R_+$$
by
\begin{align}
& F_{n,d}\pr{y_{1,1}, \ldots, y_{1,n}, \ldots, y_{d,1}, \ldots, y_{d,n}} \nonumber \\
&= \pr{y_{1,1} + \ldots + y_{1,n}, \ldots, y_{d,1} + \ldots + y_{d,n}, \frac{\abs{y}^2}{2d}}.
\label{FndDef}
\end{align}
In other words, $F_{n,d}$ is defined so that \eqref{yNorm} and \eqref{xiDef} both hold.

The global measure $\mu_{n,d}$ on $\R^{n \cdot d}$ has the property that 
$$F_{n,d \#} \pr{\mu_{n,d}} = \int_0^\iny f_{n,d \#} \mu_{n,d}^t \d{t} = \int_0^\iny \nu_{n,d}^t \d{t}.$$
We see that
$$\mu_{n,d} = \frac{1}{d \abs{S^{n \cdot d -1}} \abs{y}^{n \cdot d -2}} \d{y}.$$

This viewpoint gives us another pair of sets, these ones related through $F_{n,d}$.

\begin{defn}
Let $B_\tau^n$ denote the ball of radius $\sqrt{2 d \tau}$ in $\R^{n\cdot d}$,
\begin{equation}
{B}_\tau^n = \set{y \in \R^{n \cdot d} : \abs{y} \le \sqrt{2 d \tau}}.
\label{BtnDefn}
\end{equation} 
Let $K_{n\tau}$ denote the following space-time cone in $\R^{d} \times \R_+$,
\begin{equation}
K_{n\tau} = \set{\pr{x,t} \in \R^d \times \R_+: x \in B_{nt}, t \le \tau}.
\label{KntDefn}
\end{equation} 
\end{defn}

\begin{rem}
It follows from the previous remark that $F_{n,d}\pr{B_\tau^n} = K_{n\tau}.$
Consequently, $\mu_{n,d}$ is a measure on space-time cones.
\end{rem}

Not surprisingly, there is a version of the integrability relationships for this setting as well.

\begin{lemma}
Let $G_n\pr{x,t}$, $G\pr{x, t}$ be as given in \eqref{GnDefn} and \eqref{GDefn}, respectively.
If $\phi : \R^d \times \pr{0, T} \to \R$ is integrable with respect to $G\pr{x,t} \d{x}\,\d{t}$, then $\phi$ is also integrable with respect to $G_n\pr{x,t} \d{x} \, \d{t}$ for every $n$.
\label{intLemma2}
\end{lemma}

The proof of Lemma \ref{intLemma2} mirrors that of Lemma \ref{intLemma}, so we omit it.
By the definition of the pushforward, the computations from above, and the previous lemma, we reach the following result.

\begin{lemma}
\label{PFT}
Let $G_n\pr{x,t}$ be as given in \eqref{GnDefn}.
If $\phi: \R^d \times \pr{0, T} \to \R$ is integrable with respect to $G\pr{x, t} \d{x} \, \d{t}$, then for any $\tau \le T$,
\begin{align*}
& \frac{1}{d \abs{S^{n \cdot d -1}}} \int_{B_\tau^n} \phi\pr{F_{n,d}\pr{y}} \abs{y}^{2 - n \cdot d} \d{y} 
=  \int_0^\tau \int_{\R^d} \phi\pr{x,t} G_n\pr{x,t} \d{x} \, \d{t}.
\end{align*}
Moreover, if $T = \iny$, then
\begin{align*}
&\frac{1}{d \abs{S^{n \cdot d -1}}} \int_{\R^{n \cdot d}} \phi\pr{F_{n,d}\pr{y}} \abs{y}^{2 - n \cdot d } \d{y} 
= \int_0^\iny \int_{\R^d} \phi\pr{x,t}G_n\pr{x,t} \d{x} \, \d{t}.
\end{align*}
\end{lemma}

\section{Preliminaries}
\label{PG}

Here we collect the additional tools that will be used repeatedly throughout the article.
From now on, we use the following convention:
Each function $v = v\pr{y}$ can be thought of as a solution to some (possibly non-homogeneous) elliptic equation; while every $u = u\pr{x,t}$ can be thought of as a solution a (possibly non-homogeneous) parabolic equation.
We relate $u$ and $v$ to one another through $F_{n,d}$, that is, $v\pr{y} = u\pr{F_{n,d}\pr{y}} = u\pr{x,t}$.

First, we state a lemma that relates the derivatives of $u$ and $v$, whenever $u$ and $v$ satisfy the relation $v\pr{y} = u\pr{F_{n,d}\pr{y}} = u\pr{x,t}$.
As we see below, if $u$ satisfies a parabolic partial differential equation, then $v$ is a solution to a related (possibly non-homogeneous) elliptic equation.
Therefore, in combination with Lemma \ref{PFTS} or \ref{PFT}, these relations build the bridge between the elliptic theory and the parabolic theory.
This lemma will be referred to throughout the article.
The proof of each statement follows from an application of the chain rule.

\begin{lemma}
Let $u: \R^{d} \times \pr{0, T} \to \R$.  
If $v: \R^{n \cdot d} \to \R$ is such that $v\pr{y} = u\pr{F_{n,d}\pr{y}}$, then the following hold:
\begin{align}
&\der{v}{y_{i,j}}  = \der{u}{x_i} +  \frac{y_{i,j}}{d} \der{u}{t} 
\label{firstDer} \\
&\derm{v}{y_{i,j}}{y_{k,l}} =\derm{u}{x_i}{x_k} + \frac{y_{k,l}}{d} \derm{u}{x_i}{t} +\frac{ y_{i,j}}{d} \derm{u}{x_k}{t} + \frac{y_{i,j} y_{k,l}}{d^2} \dert{u}{t} \nonumber \\
&\qquad\qquad\; + \frac{\de_{ik} \de_{jl}}{d} \der{u}{t} 
\label{mixedDer} \\
& \LP v = n\pr{\LP u + \der{u}{t}} + \frac{2}{d} \pr{x, t} \cdot \gr_{\pr{x,t}}\pr{ \der{u}{t}}
\label{Laplace} \\
& y \cdot \gr v = x \cdot \gr u +2t \der{u}{t}
\label{vngr} \\
& \abs{\gr v}^2 = n \abs{\gr u}^2 + \frac{2}{d} \brac{ {\pr{x,t} \cdot \gr_{\pr{x,t}} u} }\der{u}{t}
\label{grSq}
\end{align}
\label{ChainR}
\end{lemma}

In \eqref{GnDefn}, we introduced the functions $G_n\pr{x,t} = G_{t,n}\pr{x}$ which serve as the weights in the measures that come up in Lemmas \ref{PFTS} and \ref{PFT}.
Considered as a sequence, these functions converge to the standard Gaussian function.

\begin{lemma}
Let $G_n\pr{x,t}$, $G\pr{x, t}$ be as given in \eqref{GnDefn} and \eqref{GDefn}, respectively. 
For every $\pr{x,t} \in \R^d \times \R_+$, $\disp \lim_{n \to \iny} G_n\pr{x,t} = G\pr{x,t}$.
\label{GnLimit}
\end{lemma}

\begin{proof}
By Stirling's formula
$$\lim_{n \to \iny}\pr{2 n d}^{-\frac{d}{2}} \frac{\abs{S^{n \cdot d-1 - d}}}{\abs{S^{n \cdot d -1}}}   = \pr{\frac{1}{4\pi}}^{d/2}$$
and by standard limit laws,
\begin{equation}
\lim_{n \to \iny} \pr{1 - \frac{\abs{x}^2}{2 n d t}}^{\frac{n \cdot d - d - 2}{2}} = \exp\pr{-\frac{\abs{x}^2}{4t}}.
\label{expLim}
\end{equation}
Since $\disp \lim_{n \to \iny} B_{nt} = \R^d$, then $\disp \lim_{n \to \iny} G_{n}\pr{x,t} = G\pr{x,t}$.
\end{proof}

Having established all of our main tools, we now describe the main technique that will be used below to prove each parabolic theorem from an elliptic counterpart.
Given a parabolic function $u = u\pr{x,t}$, we define $v_n = v_n\pr{y}$ so that $v_n\pr{y} = u\pr{F_{n,d}\pr{y}}$ for every $n \in \N$.
Using the relations presented in Lemma \ref{ChainR}, we show that each $v_n$ is elliptic in the sense that it solves a related time-indepedent equation.
Therefore, there is an elliptic theorem that applies to each $v_n$.  
By either Lemma \ref{PFTS} or Lemma \ref{PFT}, an integral involving $v_n$ over a sphere or a ball is equivalent to some integral involving $u$ over a time-slice or a space-time cylinder.
Once we establish this relationship for every $n \in \N$, we take a limit as $n \to \iny$ and employ Lemma \ref{GnLimit} to reach the conclusion of the parabolic theorem.

By examining \eqref{Laplace}, we see that there is not an exact connection between elliptic and parabolic equations through $F_{n,d}$ in the following sense: 
If $u$ solves a homogeneous parabolic equation, then $v_n$ solves a possibly non-homogeneous elliptic equation.
Therefore, to prove a parabolic theorem using a high-dimensional limit argument, we may require a non-homogeneous version of the related elliptic theorem.
In fact, to prove each of the parabolic monotonicity theorems presented in Sections \ref{FF}, \ref{FBP}, \ref{HM}, and \ref{MM}, we employ non-homogeneous elliptic theorems.
These new elliptic theorems resemble their homogeneous counterparts and are proved using the same techniques in the current article.

\section{Carleman Estimates}
\label{CE}

Within this section, we use an elliptic Carleman estimate to prove its parabolic analogue.  
The main tool used in this proof is Lemma \ref{PFT}.

The following elliptic Carleman estimate is the $L^2$ case of Theorem 1 from \cite{ABG81}.
The original theorem was used to establish unique continuation properties of functions that satisfy $\abs{\LP v} \le\abs{V} \abs{v}$, for $v \in H^{2,q}_{loc}\pr{\Om}$, $V \in L^w_{loc}\pr{\Om}$, where $w > \frac{N}{2}$, and $\Om \subset \R^N$ is open and connected.

\begin{thm}[\cite{ABG81}, Theorem 1]
For any $\ga \in \R$ and all $v \in H^{2,2}_c\pr{\R^N \setminus \set{0}}$, the following inequality holds
\begin{equation}
 \norm{\abs{y}^{-\ga +2} \LP v}_{L^2\pr{\R^N}} \ge c\pr{\ga, N} \norm{\abs{y}^{-\ga} v}_{L^2\pr{\R^N}},
\end{equation}
where
$$c\pr{\ga, N} = \inf_{\ell \in \Z_{\ge 0}} \abs{\pr{\frac{N}{2} + \ell + \ga - 2}\pr{\frac{N}{2} + \ell - \ga}}.$$
\label{ECE}
\end{thm}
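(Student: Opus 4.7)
The plan is to reduce the weighted inequality to a one-dimensional constant-coefficient problem on $L^2(\R)$ and extract the sharp constant via Plancherel. First, I would expand $v$ in spherical harmonics, $v(r,\omega) = \sum_{\ell \ge 0} v_\ell(r) Y_\ell(\omega)$, using the orthonormal basis $\set{Y_\ell}$ of eigenfunctions of $\Delta_{S^{N-1}}$ with eigenvalues $-\mu_\ell$, where $\mu_\ell = \ell(\ell + N - 2)$. Using the polar form $\LP = \del_r^2 + \frac{N-1}{r}\del_r + r^{-2} \Delta_{S^{N-1}}$ together with orthogonality, both sides of the inequality split as sums over $\ell$, so it suffices to prove, for each fixed $\ell$, that
\begin{equation*}
\int_0^{\iny} \abs{r^{-\ga+2}\brac{v_\ell'' + \tfrac{N-1}{r} v_\ell' - \tfrac{\mu_\ell}{r^2} v_\ell}}^2 r^{N-1}\,dr \ge c_\ell(\ga,N)^2 \int_0^{\iny} \abs{r^{-\ga} v_\ell}^2 r^{N-1}\,dr,
\end{equation*}
where $c_\ell(\ga,N) = \abs{(N/2 + \ell + \ga - 2)(N/2 + \ell - \ga)}$.

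Next I would pass to logarithmic coordinates by setting $s = \log r$ and $f(s) = v_\ell(e^s)$, which converts the radial operator into the constant-coefficient expression $f'' + (N-2) f' - \mu_\ell f$ and the measure into $e^{Ns}\,ds$. A further conjugation $g(s) = e^{(N/2 - \ga) s} f(s)$ absorbs the residual exponential weight so that both sides become unweighted $L^2(\R)$ norms, and a short chain-rule computation yields
\begin{equation*}
Pg := g'' + 2(\ga - 1) g' + C_\ell\, g, \qquad C_\ell := (\ga - 1)^2 - (N/2 + \ell - 1)^2,
\end{equation*}
reducing the per-mode inequality to $\norm{Pg}_{L^2(\R)} \ge \abs{C_\ell}\, \norm{g}_{L^2(\R)}$. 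The difference-of-squares factorization $(N/2 + \ell - 1)^2 - (\ga - 1)^2 = (N/2 + \ell + \ga - 2)(N/2 + \ell - \ga)$ identifies $\abs{C_\ell}$ with $c_\ell(\ga, N)$.

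Finally, I would take the Fourier transform in $s$; since $v$ is compactly supported away from the origin, $f$ and $g$ are compactly supported on $\R$, so Plancherel applies. The symbol of $P$ is $-\xi^2 + 2(\ga - 1) i \xi + C_\ell$, whose squared modulus is
\begin{equation*}
h(\xi) = (\xi^2 - C_\ell)^2 + 4(\ga - 1)^2 \xi^2.
\end{equation*}
Writing $u = \xi^2 \ge 0$, this becomes a convex quadratic in $u$ whose unconstrained minimizer sits at $u^\star = C_\ell - 2(\ga - 1)^2 = -(\ga - 1)^2 - (N/2 + \ell - 1)^2 \le 0$. Hence $h$ is monotone nondecreasing on $[0, \iny)$ and attains its minimum at $u = 0$ with value exactly $C_\ell^2$. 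Summing over $\ell$ and taking the infimum gives the theorem.

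The main obstacle I anticipate is purely organizational: the two successive conjugations must be tracked carefully so that the zeroth-order coefficient lands precisely on $(\ga - 1)^2 - (N/2 + \ell - 1)^2$, since a dropped factor at any step would degrade the sharp constant. The genuinely substantive input is the sign observation $u^\star \le 0$, which is what forces the minimum of the symbol to occur at $\xi = 0$ rather than at an interior critical point; without this the constant one obtains from the symbol analysis would be strictly smaller than $\abs{C_\ell}$.
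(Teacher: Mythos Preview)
The paper does not supply a proof of this theorem: it is quoted from \cite{ABG81} and used as a black box in the derivation of the parabolic Carleman estimate (Theorem~\ref{PCE}). So there is no ``paper's own proof'' to compare against.

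That said, your argument is correct and is essentially the classical one. The spherical-harmonic decomposition reduces the problem to a family of radial ODEs; the logarithmic substitution $s=\log r$ turns the Euler-type operator into a constant-coefficient one; the conjugation by $e^{(N/2-\gamma)s}$ absorbs the weight; and the Fourier/Plancherel step reads off the sharp constant from the symbol. Your identification of the zeroth-order coefficient
\[
C_\ell=(\gamma-1)^2-(N/2+\ell-1)^2
\]
and the factorization $|C_\ell|=\bigl|(N/2+\ell+\gamma-2)(N/2+\ell-\gamma)\bigr|$ are both right, and the key sign observation $u^\star=C_\ell-2(\gamma-1)^2=-(\gamma-1)^2-(N/2+\ell-1)^2\le 0$ is exactly what pins the minimum of $|\text{symbol}|^2$ at $\xi=0$. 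The compact support of $v$ away from the origin guarantees that $g$ is compactly supported on $\R$, so Plancherel is legitimate. One small point worth making explicit in a write-up: the spherical-harmonic eigenspaces have multiplicity, so one should sum over an orthonormal basis of each eigenspace rather than a single $Y_\ell$; this is cosmetic and does not affect the argument.
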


\begin{rem}
In order for this theorem to be meaningful, we must ensure that $c\pr{\ga, N} > 0$.  
\end{rem}

The following parabolic Carleman estimate is the $L^2$ version of Theorem 1 from \cite{E00}.
The original theorem was used to prove strong unique continuation of solutions to the heat equation.  

\begin{thm}[\cite{E00}, Theorem 1]
Let $d \ge 1$.  Let $\al \in \R$ be such that $\be = 2\al - \frac{d}{2} - 1 > 0$ is not an integer.  Then there is a constant $C$ depending only on $d$ and $\eps = \dist\pr{\be, \Z_{\ge 0}}$ such that the inequality
\begin{align*}
\int_0^\iny \int _{\R^d} t^{-2\al} e^{-\frac{\abs{x}^2}{4t}} \abs{u}^2 \d{x} \, \d{t} 
\le C\pr{d, \eps} \int_0^\iny \int _{\R^d} t^{-2\al + 2} e^{-\frac{\abs{x}^2}{4t}} \abs{\LP u + \del_t u}^2 \d{x} \, \d{t},
\end{align*}
holds for every $u \in C^\iny_0\pr{\R^{d+1}_+ \setminus \set{\pr{0,0}}}$.
\label{PCE}
\end{thm}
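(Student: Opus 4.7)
The plan is to apply Theorem \ref{ECE} in dimension $N = n \cdot d$ to the lift $v_n\pr{y} = u\pr{F_{n,d}\pr{y}}$, convert the resulting inequality to a parabolic one via Lemma \ref{PFT}, and then let $n \to \iny$, invoking Lemma \ref{GnLimit}. Since $u \in C^\iny_0\pr{\R^{d+1}_+ \setminus \set{\pr{0,0}}}$ and $t = \abs{y}^2/\pr{2d}$ on the image of $F_{n,d}$, each $v_n$ is smooth with compact support in an annulus of $\R^{n \cdot d} \setminus \set{0}$, so $v_n \in H^{2,2}_c\pr{\R^{n \cdot d} \setminus \set{0}}$ and Theorem \ref{ECE} applies.

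The crucial choice of parameter is $\ga_n = \tfrac{\pr{n-1}d}{2} + 2\al - 1$, selected so that both weights $\abs{y}^{-2\ga_n}$ and $\abs{y}^{-2\ga_n + 4}$ depend on $y$ only through $t$. Inserting the factor $\abs{y}^{2 - n \cdot d}$ and invoking Lemma \ref{PFT}, one computes $\tfrac{n \cdot d - 2 - 2\ga_n}{2} = \tfrac{d}{2} - 2\al$ and thereby obtains
\begin{align*}
\int_{\R^{n \cdot d}} \abs{y}^{-2\ga_n}\abs{v_n}^2 \d{y} &= d\abs{S^{n \cdot d - 1}} \pr{2d}^{d/2 - 2\al} \int_0^\iny \int_{\R^d} t^{d/2 - 2\al} \abs{u}^2 G_n\pr{x,t} \d{x}\,\d{t},
\end{align*}
and the analogous LHS identity produces an integral of $t^{d/2 + 2 - 2\al}\abs{\LP v_n}^2 G_n\pr{x,t}$. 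Since $G_n \to \pr{4\pi t}^{-d/2}e^{-\abs{x}^2/\pr{4t}}$ by Lemma \ref{GnLimit}, the Gaussian's $t^{-d/2}$ absorbs the surplus exponent, producing exactly the weights $t^{-2\al}e^{-\abs{x}^2/\pr{4t}}$ and $t^{-2\al + 2}e^{-\abs{x}^2/\pr{4t}}$ of Theorem \ref{PCE}.

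To extract $\abs{\LP u + \del_t u}^2$ from $\abs{\LP v_n}^2$, I would use \eqref{Laplace} to write $\LP v_n = n\pr{\LP u + \del_t u} + \tfrac{2}{d}\brac{x \cdot \gr_x\pr{\del_t u} + t\,\del_{tt} u}$, so that $\abs{\LP v_n}^2/n^2 = \abs{\LP u + \del_t u}^2 + O(1/n)$ uniformly on $\supp u$. Dominated convergence, using the uniform $L^\iny$-bound on $G_n/G$ from Lemma \ref{intLemma}, transfers this pointwise convergence to the weighted integrals. Simultaneously, the elliptic constant
\begin{equation*}
c\pr{\ga_n, n \cdot d} = \inf_{\ell \in \Z_{\ge 0}} \abs{\pr{n \cdot d + \ell + 2\al - d/2 - 3}\pr{\ell - \be}}
\end{equation*}
with $\be = 2\al - d/2 - 1$ satisfies $c\pr{\ga_n, n \cdot d}/n \to d\eps$, since its first factor is positive of order $n \cdot d$ while $\inf_{\ell \ge 0}\abs{\ell - \be} = \dist\pr{\be, \Z_{\ge 0}} = \eps > 0$ by hypothesis. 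Dividing the elliptic Carleman inequality by $n^2$ and passing to the limit yields Theorem \ref{PCE} with a constant of order $\eps^{-2}$.

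The main obstacle I expect is keeping the two independent $n^2$-scalings synchronized --- one from $\abs{\LP v_n}^2 \sim n^2\abs{\LP u + \del_t u}^2$ and the other from $c\pr{\ga_n, n \cdot d}^2 \sim n^2 d^2 \eps^2$ --- while simultaneously verifying that both the $O(1/n)$ cross terms in the Laplacian expansion and the shift by $t^{d/2}$ inherent to the Gaussian normalization vanish or cancel cleanly, all within a single dominated-convergence argument.
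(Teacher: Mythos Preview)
Your proposal is correct and follows essentially the same route as the paper: the same lift $v_n = u \circ F_{n,d}$, the same choice $\ga_n = \tfrac{(n-1)d}{2} + 2\al - 1$ (equivalently $2\al = \ga_n - \tfrac{nd - d - 2}{2}$), the same use of Lemma~\ref{PFT} and \eqref{Laplace}, and the same asymptotic $c(\ga_n, nd) \sim nd\,\eps$ before dividing by $n^2$ and passing to the limit via Lemma~\ref{GnLimit}. The ``obstacle'' you flag is exactly what the paper handles by splitting $\abs{\LP v_n}^2 \le 2n^2\abs{\LP u + \del_t u}^2 + 2\abs{\tfrac{2}{d}(x,t)\cdot\gr_{(x,t)}\del_t u}^2$ and observing that the second term carries a prefactor of order $n^{-2}$ after division, so it vanishes in the limit.
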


We now show that Theorem \ref{PCE} follows from the elliptic result, Theorem \ref{ECE}, Lemma \ref{PFT}, and the results of Section \ref{PG}.

\begin{proof}
Let $u \in C^\iny_0\pr{\R^{d+1}_+ \setminus \set{\pr{0,0}}}$.
For every $n \in \N$, let $v_{n} : \R^{n \cdot d} \to \R$ satisfy
$$v_{n}\pr{y} = u\pr{F_{n,d}\pr{y}}.$$
Since $u \in C^\iny_0$, then $v_n$ and $\LP v_n$ satisfy the hypotheses of Lemma \ref{PFT}, then for $\ga_n$ to be defined below
\begin{align}
\int_{\R^{n \cdot d}} &\abs{v_{n}\pr{y}}^2 \abs{y}^{-2 \ga_n} \d{y}
= \int_{\R^{n \cdot d}} \abs{ u\pr{F_{n,d}\pr{y}}}^2 \abs{y}^{n\cdot d - 2 -2 \ga_n} \abs{y}^{2 - n \cdot d} \d{y}
\nonumber \\
&= d \abs{S^{n \cdot d-1}} \int_0^\iny \int_{\R^d} \abs{u\pr{x, t}}^2 \pr{2dt}^{\frac{n \cdot d}{2} -1 -\ga_n} G_n\pr{x,t} \d{x} \, \d{t}
\label{uEst}
\end{align}
and
\begin{align}
& \frac{1}{n^2 d \abs{S^{n \cdot d-1}}  } \int_{\R^{n \cdot d}} \abs{\LP v_n\pr{y}}^2 \abs{y}^{4-2 \ga_n} \d{y} \label{uLPEst} \\
&= \int_0^\iny \int_{\R^d} \abs{{\LP u + \del_{t} u} + \frac{2\brac{\pr{x,t} \cdot \gr_{\pr{x,t}}\del_t u}}{n \cdot d}}^2 \pr{2dt}^{\frac{n \cdot d}{2}+1- \ga_n} G_n\pr{x,t} \d{x} \, \d{t},\nonumber
\end{align}
where we have used \eqref{Laplace} from Lemma \ref{ChainR}.
By Theorem \ref{ECE} 
\begin{align}
\int_{\R^{n \cdot d}} \abs{\LP v_n\pr{y}}^2 \abs{y}^{4-2 \ga_n} \d{y} 
&\ge C\pr{\ga_n, n d}^2 \int_{\R^{n \cdot d}} \abs{v_n\pr{y}}^2 \abs{y}^{-2 \ga_n} \d{y} .
\label{T1App}
\end{align}
Combining \eqref{uEst}, \eqref{uLPEst}, and \eqref{T1App} and simplifying, we see that
\begin{align*}
& \frac{C\pr{\ga_n, n d}^2}{4 n^2 d^2} \int_0^\iny \int_{\R^d} \abs{u\pr{x, t}}^2 t^{\frac{n \cdot d}{2} -1 -\ga_n} G_n\pr{x,t} \d{x} \, \d{t} \\
&\le  \int_0^\iny \int_{\R^d} \abs{{\LP u + \del_{t} u} + \frac{2}{n \cdot d}\brac{\pr{x,t} \cdot \gr_{\pr{x,t}}\del_t u}}^2 t^{\frac{n \cdot d}{2}+1- \ga_n} G_n\pr{x,t} \d{x} \, \d{t} .
\end{align*}
Setting $2\al = \ga_n - \frac{n\cdot d - d - 2}{2}$ and simplifying gives
\begin{align*}
& \frac{C\pr{\ga_n, n d}^2}{8 n^2 d^2 } \int_0^\iny \int_{\R^d} \abs{u\pr{x, t}}^2 t^{ \frac{d }{2} - 2\al} G_n\pr{x,t} \d{x} \, \d{t} \\
&\le  \int_0^\iny \int_{\R^d} \abs{{\LP u + \del_{t} u} }^2 t^{\frac{d }{2} - 2\al+2} G_n\pr{x,t} \d{x} \, \d{t} \\
&+ \frac{4}{n^2 d^2} \int_0^\iny \int_{\R^d} \abs{\pr{x,t} \cdot \gr_{\pr{x,t}}\del_t u }^2 t^{\frac{d }{2} - 2\al+2} G_n\pr{x,t} \d{x} \, \d{t} .
\end{align*}
Since
\begin{align*}
c\pr{\ga_n, nd} 
&= \inf_{\ell \in \Z_{\ge 0}} \abs{\brac{ nd -2 + \ell + \pr{2\al - \frac{ d }{2} - 1}}\brac{ \ell - \pr{2\al - \frac{ d }{2} - 1}}} \\
&= \inf_{\ell \in \Z_{\ge 0}} \abs{\pr{ nd -2 + \ell +\be}\pr{ \ell -\be}}
\ge \pr{nd - 2 + \be} \eps,
\end{align*}
then it follows that
\begin{align*}
& \int_0^\iny \int_{\R^d} \abs{u\pr{x, t}}^2 t^{ \frac{d }{2} - 2\al} G_n\pr{x,t} \d{x} \, \d{t} \\
&\le \frac{8 n^2 d^2 }{\pr{nd - 2+\be}^2 \eps^2} \int_0^\iny \int_{\R^d} \abs{{\LP u + \del_{t} u} }^2 t^{\frac{d }{2} - 2\al+2} G_n\pr{x,t} \d{x} \, \d{t} \\
&+ \frac{32 }{\pr{nd - 2+\be}^2 \eps^2}\int_0^\iny \int_{\R^d} \abs{\pr{x,t} \cdot \gr_{\pr{x,t}}\del_t u }^2 t^{\frac{d }{2} - 2\al+2} G_n\pr{x,t} \d{x} \, \d{t} .
\end{align*}
We now take the limit as $n \to \iny$. 
By an application of Lemma \ref{GnLimit}, we see that
\begin{align*}
& \int_0^\iny \int_{\R^{d}} \abs{u\pr{x, t}}^2 t^{-2\al} e^{-\frac{\abs{x}^2}{4t}} \d{x}\, \d{t}  \\
&\le C\pr{d, \eps} \int_0^\iny \int_{\R^{d}} \abs{\LP u + \del_{t} u}^2 t^{-2\al + 2} e^{-\frac{\abs{x}^2}{4t}} \d{x} \, \d{t} ,
\end{align*}
as required.
\end{proof}

\section{Frequency Functions}
\label{FF}

In this section, we explore the non-trivial connection between frequency functions for solutions to elliptic and parabolic equations.  
In particular, we use a monotonicity result for solutions to the Poisson equation in conjunction with Lemmas \ref{PFTS} and \ref{PFT} to prove the corresponding monotonicity result for solutions to the heat equation.

In \cite{GL86} and \cite{GL87}, Garofalo and Lin studied the properties of frequency functions and used their results to prove a strong unique continuation theorem for solutions to elliptic partial differential equations.  
To do this, they generalized the following result due to Almgren from \cite{Al79} for frequency functions associated to harmonic functions.

\begin{thm}
For $v : \R^N \to \R$, define
\begin{align*}
& H\pr{r; v} = \int_{\del B_r} \abs{v\pr{y}}^2 \d{S}\pr{y} \\ 
& D\pr{r; v} = \int_{B_r} \abs{\gr v\pr{y}}^2 \d{y} \\
& L\pr{r; v} = \frac{r D\pr{r; v}}{ H\pr{r; v}}.
\end{align*}
If $\LP v = 0$ in $\R^N$, then $L\pr{r; v}$ is monotonically non-decreasing in $r$.
\label{eFF}
\end{thm}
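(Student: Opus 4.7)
The plan is to compute $L'(r)$ directly by taking a logarithmic derivative, $L'/L = 1/r + D'/D - H'/H$, and then show that after combining everything, what remains is nonnegative by the Cauchy--Schwarz inequality. The key inputs are three identities tying $H$, $D$, and their derivatives to boundary integrals on $\del B_r$, together with harmonicity.

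First I would handle $H$. Writing $H(r) = r^{N-1} \int_{S^{N-1}} |v(r\omega)|^2 \d{\omega}$ in polar coordinates and differentiating gives
\begin{equation*}
H'(r) = \frac{N-1}{r} H(r) + 2 \int_{\del B_r} v \, \del_\nu v \, \d{S}.
\end{equation*}
Integrating by parts and using $\LP v = 0$, one has $\int_{\del B_r} v \, \del_\nu v \,\d{S} = \int_{B_r} |\gr v|^2 \d{y} = D(r)$, so $H'(r) = \frac{N-1}{r} H(r) + 2 D(r)$. Next, for $D$, I would combine the trivial identity $D'(r) = \int_{\del B_r} |\gr v|^2 \d{S}$ with the Rellich--Pohozaev identity obtained by multiplying $\LP v = 0$ by $y \cdot \gr v$ and integrating by parts on $B_r$. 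That identity reads
\begin{equation*}
r \int_{\del B_r} |\gr v|^2 \d{S} = 2 r \int_{\del B_r} |\del_\nu v|^2 \d{S} + (N-2) D(r),
\end{equation*}
and hence $D'(r) = 2 \int_{\del B_r} |\del_\nu v|^2 \d{S} + \frac{N-2}{r} D(r)$.

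Assembling the log derivative, the dimension-dependent terms cancel cleanly:
\begin{equation*}
r \frac{L'(r)}{L(r)} = 1 + r \frac{D'(r)}{D(r)} - r \frac{H'(r)}{H(r)} = \frac{2r \int_{\del B_r} |\del_\nu v|^2 \d{S}}{D(r)} - \frac{2r D(r)}{H(r)} = \frac{2r}{D(r) H(r)} \brac{H(r) \int_{\del B_r} |\del_\nu v|^2 \d{S} - D(r)^2}.
\end{equation*}
Finally, applying Cauchy--Schwarz to the identity $D(r) = \int_{\del B_r} v \, \del_\nu v \, \d{S}$ from Step~1 yields $D(r)^2 \le H(r) \int_{\del B_r} |\del_\nu v|^2 \d{S}$, so the bracket is nonnegative and $L'(r) \ge 0$.

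I do not expect a genuine obstacle here; the only delicate point is bookkeeping the dimension exponents so that the $\frac{N-1}{r}$ and $\frac{N-2}{r}$ contributions from $H'/H$ and $D'/D$ cancel against the $1/r$ from $\log r$. One should also note that the argument is local: $\LP v = 0$ is only used inside $B_r$, so the theorem applies verbatim to any harmonic $v$ defined on a ball containing $B_r$, which is what will be needed when this is applied (via Lemmas \ref{PFTS} and \ref{PFT}) to $v_n(y) = u(F_{n,d}(y))$ in the subsequent parabolic frequency-function argument.
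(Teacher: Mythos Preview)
Your proposal is correct and matches the paper's approach essentially line for line. The paper does not prove Theorem~\ref{eFF} separately but proves the non-homogeneous Corollary~\ref{eFF2}, whose proof with $h=0$ is exactly what you wrote: the formula for $H'(r)$, the identity $D(r)=\tfrac{1}{r}\int_{\del B_r} v\,(y\cdot\gr v)\,\d S$, the Rellich--Pohozaev computation of $D'(r)$, and Cauchy--Schwarz on $\del B_r$; the only cosmetic difference is that you organize the cancellation via the logarithmic derivative $L'/L$ while the paper computes $L'$ directly.
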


In what follows, we require a non-homogeneous version of Theorem \ref{eFF} to prove the parabolic analogue.

\begin{cor}
For $v : \R^N \to \R$, define $H$, $D$, and $L$ as in the statement of Theorem \ref{eFF}.
If $\LP v = h$ in $\R^N$, where $h$ is bounded and measurable, then 
\begin{align*}
L^\prime\pr{r; v} 
&\ge 2 \frac{\pr{ \int_{\del B_r} v \, {y \cdot \gr v} \d{S}\pr{y} } \pr{ \int_{B_r} h \, v \, \d{y}} }{ \pr{\int_{\del B_r} \abs{v\pr{y}}^2 \d{S}\pr{y}}^2}
- 2 \frac{ \int_{B_r} h \pr{ y \cdot \gr v } \d{y} }{\int_{\del B_r} \abs{v\pr{y}}^2 \d{S}\pr{y} }.
\end{align*}
\label{eFF2}
\end{cor}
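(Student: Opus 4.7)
The plan is to mimic the classical Almgren/Garofalo--Lin monotonicity computation, carefully tracking the extra boundary and interior terms generated by the non-homogeneity $\LP v = h$. First I will compute $H'(r)$ by differentiating under the surface integral, writing $H(r) = r^{N-1}\int_{S^{N-1}}|v(r\omega)|^2 d\omega$, and then using the divergence theorem on $\int_{\partial B_r} v\partial_\nu v \, dS$ together with $\LP v = h$ to obtain
\[
H'(r) = \frac{N-1}{r}H(r) + 2D(r) + 2\int_{B_r} vh\,dy.
\]
Next I will record $D'(r) = \int_{\partial B_r}|\gr v|^2 dS$, and apply the Rellich--Pohozaev identity obtained from the identity
\[
\mathrm{div}\bigl[2(y\cdot\gr v)\gr v - |\gr v|^2 y\bigr] = (2-N)|\gr v|^2 + 2(y\cdot\gr v)\LP v
\]
integrated over $B_r$; on $\partial B_r$ we have $y\cdot\nu = r$ and $y\cdot\gr v = r\partial_\nu v$, which yields
\[
rD'(r) = 2r\int_{\partial B_r}(\partial_\nu v)^2 dS + (N-2)D(r) - 2\int_{B_r}(y\cdot\gr v)\,h\,dy.
\]

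Then I form the logarithmic derivative $L'(r)/L(r) = 1/r + D'/D - H'/H$, multiply through by $L(r) = rD(r)/H(r)$, and substitute the two expressions above. The dimensional coefficients collapse: the contribution $\tfrac{D}{H}[1 + (N-2) - (N-1)]$ is identically zero, leaving
\[
L'(r) = \frac{2r\int_{\partial B_r}(\partial_\nu v)^2 dS}{H(r)} - \frac{2rD(r)^2}{H(r)^2} - \frac{2\int_{B_r}(y\cdot\gr v)h\,dy}{H(r)} - \frac{2rD(r)\int_{B_r}vh\,dy}{H(r)^2}.
\]

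The main inequality comes from Cauchy--Schwarz on $\partial B_r$, giving
\[
\left(\int_{\partial B_r} v\,\partial_\nu v\,dS\right)^2 \le H(r)\int_{\partial B_r}(\partial_\nu v)^2 dS,
\]
combined with $\int_{\partial B_r} v\,\partial_\nu v\,dS = D(r) + \int_{B_r}vh\,dy$, obtained from the divergence theorem. Substituting the lower bound on $\int_{\partial B_r}(\partial_\nu v)^2 dS$, expanding the square $(D + \int vh)^2$, and cancelling the $2rD^2/H^2$ term produces exactly the asserted inequality once one notes $\int_{\partial B_r} v(y\cdot\gr v)\,dS = r\int_{\partial B_r}v\,\partial_\nu v\,dS = r(D(r) + \int_{B_r}vh\,dy)$.

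The main obstacle is purely bookkeeping: keeping the Pohozaev remainder $-2\int_{B_r}(y\cdot\gr v)h\,dy$ and the divergence-theorem remainder $2\int_{B_r}vh\,dy$ paired with the correct powers of $r$ and $H$, and verifying that every term on the right-hand side of the Cauchy--Schwarz expansion lines up with the claimed bound. No nontrivial analytic input beyond Cauchy--Schwarz and the Pohozaev identity is required; the boundedness and integrability of $h$ ensure that all boundary and volume integrals are finite and differentiation under the integral sign is justified.
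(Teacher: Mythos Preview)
Your proposal is correct and follows essentially the same route as the paper: compute $H'$, derive the Rellich--Pohozaev identity for $D'$, form $L'$, and apply Cauchy--Schwarz on $\partial B_r$. The only difference is organizational---the paper keeps $\int_{\partial B_r} v\,(y\cdot\nabla v)\,dS$ as the primary quantity and shows the Cauchy--Schwarz combination is nonnegative directly, whereas you substitute the divergence identity $D+\int_{B_r}vh = \tfrac1r\int_{\partial B_r}v(y\cdot\nabla v)$ one step earlier and then expand $(D+\int vh)^2$; the algebra and the analytic content are identical.
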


The proof of this result uses the classical techniques, but we include it here for completeness.
For brevity, we at times drop the $v$ in the notation for $H, D$, and $L$ when it is understood that these functions are associated to $v$.

\begin{proof}
A computation shows that
\begin{align*}
H^\prime\pr{r} &= \frac{N-1}{r} H\pr{r} + \frac{2}{r} \int_{\del B_r} v \, y \cdot \gr v \, \d{S}\pr{y}.
\end{align*}
Notice that
\begin{align}
D\pr{r} &= \int_{B_r} \abs{\gr v}^2 \d{y}
= \int_{B_r} \brac{ \frac 1 2 \LP\pr{v^2} - v \LP v } \d{y} \nonumber \\
&= \frac{1}{r} \int_{\del B_r} v \, y \cdot \gr v \, \d{S}\pr{y} 
-\int_{B_r} h \, v \, \d{y},
\label{IExp}
\end{align}
where we used that $\LP v = h$ and integration by parts.
Now we compute the derivative of $D\pr{r}$.
\begin{align*}
D^\prime\pr{r} &= \int_{\del B_r}  \abs{\gr v}^2 \d{S}\pr{y}
= \frac{1}{r} \int_{\del B_r} \innp{y\abs{\gr v}^2, \frac y r} \d{S}\pr{y}.
\end{align*}
For each $i = 1, 2, \ldots, N$, an integration by parts shows that
\begin{align*}
\int_{\del B_r} &y_i \abs{\gr v}^2 \cdot \frac {y_i} r \d{S}\pr{y}
= \int_{B_r} \del_i\pr{y_i \abs{\gr v}^2} \, \d{y} \\
&= \int_{B_r} \abs{\gr v}^2 \, \d{y}
+ 2\sum_{j = 1}^N \int_{B_r} y_i \der{v}{y_j} \derm{v}{y_i}{y_j} \, \d{y} \\
&= \int_{B_r} \abs{\gr v}^2  \d{y}
- 2\int_{B_r} \der{v}{y_i} \der{v}{y_{i}} \, \d{y}
- 2\sum_{j = 1}^N \int_{B_r} \dert{v}{y_j} \, \der{v}{y_i} \, y_i \, \d{y} \\
&+ 2r \sum_{j = 1}^N \int_{\del B_r}  \der{v}{y_i} \frac{y_i }{r} \der{v}{y_j} \frac{y_j}{r} \d{S}\pr{y}.
\end{align*}
Since $\LP v = h$, then
\begin{align*}
D^\prime\pr{r} 
&= \int_{\del B_r} \abs{\gr v}^2 \d{S}\pr{y} \\
&= \frac{N-2}{r}\int_{B_r} \abs{\gr v}^2  \d{y}
- \frac 2 r \int_{B_r} h \pr{y \cdot \gr v} \d{y}
+ \frac 2{r^2} \int_{\del B_r} \pr{y \cdot \gr v}^2 \d{S}\pr{y} \\
&= \frac{N-2}{r} D\pr{r}
+ \frac{2}{r^2} \int_{\del B_r} \pr{y \cdot \gr v}^2 \d{S}\pr{y}
- \frac 2 r \int_{B_r} h \pr{ y \cdot \gr v} \d{y}.
\end{align*}
Combining our computations,
\begin{align*}
L^\prime\pr{r}
&= \frac{2}{r} \set{\frac{ {\int_{\del B_r} \pr{y \cdot \gr v}^2 \d{S}\pr{y}} }{{\int_{\del B_r} \abs{v\pr{y}}^2 \d{S}\pr{y}}} 
-  \brac{\frac{ \int_{\del B_r} v \, y \cdot \gr v \, \d{S}\pr{y} }{{\int_{\del B_r} \abs{v\pr{y}}^2 \d{S}\pr{y}}}}^2} \\
&+ 2 \frac{ \pr{\int_{\del B_r} v \, y \cdot \gr v \, \d{S}\pr{y}}\pr{\int_{B_r} h \, v \, \d{y}}}{\pr{\int_{\del B_r} \abs{v\pr{y}}^2 \d{S}\pr{y}}^2}
- 2 \frac{\int_{B_r} h \pr{ y \cdot \gr v} \d{y}}{\int_{\del B_r} \abs{v\pr{y}}^2 \d{S}\pr{y}}
\end{align*}
By Cauchy-Schwarz,
$$ \pr{ \int_{\del B_r} v \, y \cdot \gr v \, \d{S}\pr{y} }^2 \le \pr{ \int_{\del B_r} \pr{y \cdot \gr v}^2 \,\d{S}\pr{y}} \pr{ \int_{\del B_r} \abs{v}^2 \, \d{S}\pr{y}},$$
so the first term is non-negative and the conclusion of the corollary follows.
\end{proof}

We use this non-homogeneous elliptic result to reprove the parabolic version from \cite{P96}, restated using the notation from \cite{E00}.
This result was a crucial tool in the proof of strong unique continuation of the heat equation.

\begin{thm}[\cite{P96}]
Let $u: \R^d \times \pr{0,T} \to \R$ and let $G_t\pr{x}$ be as in \eqref{GDefn}.
Define
\begin{align*}
& \mathcal{H}\pr{t; u} = \int_{\R^d} \abs{u\pr{x,t}}^2 G_t\pr{x} \d{x} \\ 
& \mathcal{D}\pr{t; u} = \int_{\R^d} \abs{\gr u\pr{x,t}}^2 G_t\pr{x} \d{x} \\
& \mathcal{L}\pr{t; u} = \frac{t \mathcal{D}\pr{t; r}}{ \mathcal{H}\pr{t; r}}.
\end{align*}
If $\disp \LP u + \der{ u}{t} = 0$ in $\R^d \times \pr{0, T}$, then $\mathcal{L}\pr{t; u}$ is monotonically non-decreasing in $t$.
\label{pFF}
\end{thm}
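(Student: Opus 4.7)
Following the template from Section~\ref{CE}, the plan is to fix a solution $u$ of $\LP u + \del_t u = 0$ on $\R^d \times \pr{0,T}$, lift it via $v_n\pr{y} := u\pr{F_{n,d}\pr{y}}$ for each $n \in \N$, apply an elliptic frequency monotonicity statement to $v_n$, and recover Theorem~\ref{pFF} as $n \to \iny$. The twist is that $v_n$ is not harmonic: substituting $\LP u + \del_t u = 0$ into formula~\eqref{Laplace} gives
\begin{equation*}
\LP v_n = h_n, \qquad h_n\pr{y} := \frac{2}{d}\brac{x \cdot \gr_x \del_t u + t\,\del_t^2 u}\pr{F_{n,d}\pr{y}},
\end{equation*}
so I would invoke the non-homogeneous Corollary~\ref{eFF2} rather than Theorem~\ref{eFF}.

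For each $\tau > 0$, set $r\pr{\tau} := \sqrt{2d\tau}$, so that the paper's ball $B_\tau^n$ is the elliptic ball of radius $r\pr{\tau}$ used by Corollary~\ref{eFF2}. Applying that corollary and integrating its differential inequality in the radius from $r\pr{\tau_1}$ to $r\pr{\tau_2}$ produces
\begin{equation*}
L\pr{r\pr{\tau_2};v_n} - L\pr{r\pr{\tau_1};v_n} \geq -\int_{r\pr{\tau_1}}^{r\pr{\tau_2}} E_n\pr{r}\,\d{r},
\end{equation*}
where $E_n\pr{r}$ collects the two quotients on the right of Corollary~\ref{eFF2}, each featuring a ball integral of $h_n$ against either $v_n$ or $y\cdot\gr v_n$. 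To rewrite both sides in terms of $u$, I would use \eqref{vngr} to replace $y\cdot\gr v_n$ by $x\cdot\gr u + 2t\,\del_t u$, Lemma~\ref{PFTS} to convert surface integrals on $\del B_\tau^n$ into Gaussian integrals against $G_{\tau,n}\pr{x}\,\d{x}$, and Lemma~\ref{PFT} to convert volume integrals over $B_\tau^n$ into space-time integrals against $G_n\pr{x,t}\,\d{x}\,\d{t}$. The prefactors $\abs{S^{n\cdot d - 1}}\pr{2d\tau}^{\pr{n\cdot d - 1}/2}$ cancel between numerators and denominators in each quotient, and the identity $r\,D\pr{r;v_n} = \int_{\del B_\tau^n} v_n\pr{y\cdot\gr v_n}\,\d{S} - r\int_{B_\tau^n} v_n h_n\,\d{y}$ splits $L\pr{r\pr{\tau};v_n}$ into the main piece
\begin{equation*}
\frac{\int_{\R^d} u\pr{x\cdot\gr u + 2\tau\,\del_\tau u}\, G_{\tau,n}\,\d{x}}{\int_{\R^d} u^2\, G_{\tau,n}\,\d{x}}
\end{equation*}
plus a correction of the same shape as the summands of $E_n\pr{r}$.

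The final step is to pass $n \to \iny$. Every ball-integral correction, once rewritten through Lemma~\ref{PFT}, carries a sharply peaked weight $\pr{t/\tau}^{\pr{n\cdot d - 2}/2}$ on $\pr{0,\tau}\times\R^d$, and
\begin{equation*}
\int_0^\tau \pr{t/\tau}^{\pr{n\cdot d - 2}/2}\,\d{t} = \frac{2\tau}{n\cdot d} \longrightarrow 0,
\end{equation*}
so combining this decay with the uniform bound from Lemma~\ref{intLemma} and the pointwise convergence $G_n \to G$ from Lemma~\ref{GnLimit} kills every correction in the limit. The main piece converges to $\pr{\int u\pr{x\cdot\gr u + 2\tau\,\del_\tau u} G_\tau\,\d{x}}/\pr{\int u^2 G_\tau\,\d{x}}$, which a one-line integration by parts against $\gr G_\tau = -\frac{x}{2\tau}G_\tau$, together with the heat equation, identifies with $2\,\mathcal{L}\pr{\tau;u}$. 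The integrated inequality therefore reduces to $2\mathcal{L}\pr{\tau_2;u} \geq 2\mathcal{L}\pr{\tau_1;u}$, which is the claim. The step I expect to be most technical is this vanishing-of-corrections argument: one must uniformly in $n$ dominate the polynomial moments of $u$, $\gr u$, $\del_t u$, and $\del_t^2 u$ against the Gaussian so that dominated convergence applies termwise, and check that the $r\pr{\tau}$ prefactor multiplying the correction inside $L\pr{r\pr{\tau};v_n}$ does not undo the $O\pr{1/n}$ gain from the weight.
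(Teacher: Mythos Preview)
Your proposal is correct and follows essentially the same route as the paper: lift $u$ to $v_n = u \circ F_{n,d}$, apply the non-homogeneous Corollary~\ref{eFF2} since $\LP v_n = h_n$ via \eqref{Laplace}, convert surface and ball integrals using Lemmas~\ref{PFTS} and~\ref{PFT}, and kill all $h_n$-corrections via the weight $\pr{t/\tau}^{\pr{n\cdot d-2}/2}$ before identifying the surviving term with $2\mathcal{L}\pr{\tau;u}$ through $\gr G_\tau = -\frac{x}{2\tau}G_\tau$. The only stylistic difference is that you integrate the differential inequality in $r$ before sending $n\to\iny$, whereas the paper takes the limit of $\del_t L\pr{\sqrt{2dt};v_n}$ pointwise and then invokes \eqref{LtoL}; your version sidesteps any worry about interchanging limits and derivatives, and your $L^1$-in-$t$ estimate $\int_0^\tau\pr{t/\tau}^{\pr{n\cdot d-2}/2}\,\d{t} = 2\tau/\pr{nd}$ is a clean alternative to the paper's pointwise dominated-convergence argument.
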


The non-homogeneous version of Almgren's frequency function, along with the tools developed in the early part of this article, will be used to prove Theorem \ref{pFF}.

\begin{proof}
Let $u: \R^{d} \times \pr{0,T} \to \R$ be a solution to $\disp \LP u + \del_t u = 0$ in $\R^d \times \pr{0,T}$.
For every $n \in \N$, let $v_n : B_T^n \su \R^{n \cdot d} \to \R$ satisfy
$$v_n\pr{y} = u\pr{F_{n,d}\pr{y}}.$$
Then by \eqref{Laplace} from Lemma \ref{ChainR},
\begin{align*}
\LP v_n 
&= n\pr{\LP u + \del_t {u} } + \frac{2}{d} \pr{x, t} \cdot \gr_{\pr{x,t}}\pr{ \del_t {u} } 
= \frac{2}{d} \pr{x, t} \cdot \gr_{\pr{x,t}}\pr{ \del_t {u} } 
=:  J\pr{x,t}.
\end{align*}
For every $n$, define $h_{n} : B_T^n \to \R$ so that 
$$h_n\pr{y} = J\pr{F_{n,d}\pr{y}}$$
and then
$$\LP v_n = h_n.$$
Thus, we may apply Corollary \ref{eFF2} to $v_n$ on any ball of radius $\sqrt{2 d t}$ for $t < T$.

First we compute the frequency function associated to $v_n$ on the ball of radius $\sqrt{2 d t}$.
By Lemma \ref{PFTS},
\begin{align*}
H\pr{\sqrt{2 d t}, v_n} 
&= \int_{S_t^n} \abs{v_n\pr{y}}^2 \si_{n \cdot d -1}^t \\
&= \pr{2 d t}^{\frac{n \cdot d -1}{2}} \abs{S^{n \cdot d -1}} \int_{\R^d} \abs{u\pr{x,t}}^2 G_{t,n}\pr{x} \d{x}.
\end{align*}
Using the expression \eqref{IExp} along with Lemma \ref{PFTS} and Lemma \ref{PFT},
\begin{align*}
&D\pr{\sqrt{2 d t}, v_n}
= \pr{2 d t}^{ -\frac{1}{2}} \int_{S_t^n} v_n\pr{y} \pr{y \cdot \gr v_n\pr{y}} \si_{n \cdot d -1}^t \\
&- \int_{B_t^n} h_n\pr{y} v_n\pr{y} \abs{y}^{n \cdot d -2} \abs{y}^{2- n \cdot d } \d{y} \\
&= \pr{2 d t}^{ \frac{n \cdot d -2}{2}} \abs{S^{n \cdot d -1}} \int_{\R^d} u\pr{x,t} \pr{x \cdot \gr u +2t \der{u}{t}} G_{t, n}\pr{x} \d{x} \\
& - 2 \abs{S^{n \cdot d -1}} \int_0^t \int_{\R^d} \brac{\pr{x, \tau} \cdot \gr_{\pr{x,\tau}}\pr{ \del_\tau {u} }} u\pr{x,\tau} \pr{2 d \tau }^{\frac{n \cdot d -2}{2}} G_{\tau,n}\pr{x} \d{x} \d{\tau} .
\end{align*}
where we have applied \eqref{vngr} to the second term in the first integral.
Therefore,
\begin{align*}
 L\pr{\sqrt{2 d t}, v_n} 
&= \frac{\sqrt{2 d t} D\pr{\sqrt{2 d t}, v_n}}{H\pr{\sqrt{2 d t}, v_n} } 
= \frac{ \int_{\R^d} u\pr{x,t} \pr{x \cdot \gr u +2t \der{u}{t}} G_{t,n}\pr{x} \d{x}  }{ \int_{\R^d} \abs{u\pr{x,t}}^2 G_{t,n}\pr{x} \d{x}} \\
&- \frac{ 2 \int_0^t \int_{\R^d} \brac{\pr{x, \tau} \cdot \gr_{\pr{x,\tau}}\pr{ \del_\tau {u} }} u\pr{x,\tau} \pr{\frac{ \tau}{t}}^{\frac{n \cdot d -2}{2}} G_{\tau,n}\pr{x}  \d{x} \, \d{\tau}. }{ \int_{\R^d} \abs{u\pr{x,t}}^2 G_{t,n}\pr{x}  \d{x}}.
\end{align*}
Thus,
\begin{align*}
\lim_{n \to \iny} L\pr{\sqrt{2 d t}, v_n} 
&= \frac{ \int_{\R^d} u\pr{x,t} \pr{x \cdot \gr u +2t \der{u}{t}} G_t\pr{x} \d{x} }{ \int_{\R^d} \abs{u\pr{x,t}}^2 G_t\pr{x} \d{x}},
\end{align*}
where we have used Lemma \ref{GnLimit} and that $\disp \lim_{n \to \iny} \pr{\frac{ \tau}{t}}^{\frac{n \cdot d -2}{2}} = 0$ for every $\tau \in \pr{0, t}$ along with the dominated convergence theorem.

Since $\disp \gr G_t\pr{x} = - \frac{x}{2t} G_t\pr{x}$ and $\der{u}{t} = - \LP u$, then
\begin{align*}
\int_{\R^d} u\pr{x,t} \pr{x \cdot \gr u +2t \der{u}{t}} G_t\pr{x} \d{x}
&= - 2 t \int_{\R^d} u\pr{x,t} \gr \cdot \pr{\gr u \, G_t\pr{x}} \d{x} \\
&= 2 t \int_{\R^d} \abs{\gr u\pr{x,t}}^2 \, G_t\pr{x}  \d{x}.
\end{align*}
Therefore,
\begin{align}
\lim_{n \to \iny} L\pr{\sqrt{2 d t}, v_n} 
&= \frac{ 2 t \int_{\R^d} \abs{\gr u\pr{x,t}}^2 \, G_t\pr{x}  \d{x} }{ \int_{\R^d} \abs{u\pr{x,t}}^2 G_t\pr{x} \d{x}}
= 2 \mathcal{L}\pr{t; u}.
\label{LtoL}
\end{align}
By Corollary \ref{eFF2},
\begin{align*}
\der{L\pr{\sqrt{2 d t}; v_n}}{t} 
&\ge \sqrt{\frac{2d}{t}} \frac{\pr{ \int_{S_t^n} v_n\pr{y} \,{ y \cdot \gr v_n\pr{y}} \si_{n \cdot d -1}^t} \pr{ \int_{B_t^n} h_n\pr{y} \, v_n\pr{y} \d{y} }}{ \pr{ \int_{S_t^n} \abs{v_n\pr{y}}^2 \si_{n \cdot d -1}^t}^2 } \\
&- \sqrt{\frac{2d}{t}} \frac{ \int_{B_t^n} h_n\pr{y} { y \cdot \gr v_n\pr{y} } \d{y} }{ \int_{S_t^n} \abs{v_n\pr{y}}^2 \si_{n \cdot d -1}^t}.
\end{align*}
By Lemma \ref{PFT} and \eqref{vngr}
\begin{align*}
& \frac 1 {2 \abs{S^{n \cdot d -1}} } \int_{B_t^n} h_n\pr{y} \pr{ y \cdot \gr v_n\pr{y} }\abs{y}^{n \cdot d -2} \abs{y}^{2 - n \cdot d} \d{y} \\
&=  \int_0^t \int_{\R^d} \brac{ \pr{x, \tau} \cdot \gr_{\pr{x,\tau}}\pr{ \del_\tau {u} }} \pr{x \cdot \gr u +2\tau \del_\tau{u} } \pr{2 d \tau}^{\frac{n \cdot d -2}{2}} G_{n}\pr{x,\tau} \d{x} \d{\tau}.
\end{align*}
Therefore,
\begin{align*}
&\der{L\pr{\sqrt{2 d t}; v_n}}{t} \\
&\ge - \frac{2}{t} \frac{ \int_0^t \int_{\R^d} \brac{ \pr{x, \tau} \cdot \gr_{\pr{x,\tau}}\pr{ \del_\tau {u} }} \pr{x \cdot \gr u +2\tau \del_\tau{u} } \pr{\frac \tau t}^{\frac{n \cdot d -2}{2}} G_{n}\pr{x,\tau} \d{x} \d{\tau} }{ \int_{\R^d} \abs{u\pr{x,t}}^2 G_{t,n}\pr{x} \d{x} } \\
&+ \frac{2}{t} \frac{\pr{  \int_{\R^d} u\pr{x,t} \pr{x \cdot \gr u +2t \der{u}{t}} G_{t, n}\pr{x} \d{x} } }{ \pr{ \int_{\R^d} \abs{u\pr{x,t}}^2 G_{t,n}\pr{x} \d{x} }^2 } \times \\
&\times \pr{ \int_0^t \int_{\R^d} \brac{\pr{x, \tau} \cdot \gr_{\pr{x,\tau}}\pr{ \del_\tau {u} }} u\pr{x,\tau} \pr{\frac \tau t}^{\frac{n \cdot d -2}{2}} G_{\tau,n}\pr{x} \d{x} \d{\tau} }.
\end{align*}
By the same reasoning as above, we conclude that $\disp \lim_{n \to \iny}\der{L\pr{\sqrt{2 d t}; v_n}}{t} \ge 0$ .
It follows from \eqref{LtoL} that $\mathcal{L}$ is monotonically non-decreasing in $t$, as required.
\end{proof}

\section{Free Boundary Problems}
\label{FBP}

In \cite{ACF84}, the authors study two-phase free boundary elliptic problems.
The monotonicity formula presented below is a key tool in their work.
This formula is used to establish Lipschitz continuity of minimizers, to identify blow-up limits, and to prove differentiability of the free boundary when $N=2$.

\begin{thm}[\cite{ACF84}, Lemma 5.1]
Let $v_1, v_2$ be two non-negative functions that belong to $C^0\pr{B_R}\cap H^{1,2}\pr{B_R}$, where $B_R$ is the ball of radius $R$ in $\R^N$.
Assume that $\LP v_1 \ge 0$, $\LP v_2 \ge 0$, $v_1 v_2 \equiv 0$ and $v_1\pr{0} = v_2\pr{0} = 0$.
Then for all $r < R$,
\begin{equation}
\phi\pr{r; v} = \frac{1}{r^4} \pr{ \int_{B_r}  \abs{\gr v_1\pr{y}}^2 \abs{y}^{2-N} \d{y} } \pr{ \int_{B_r} \abs{\gr v_2\pr{y}}^2 \abs{y}^{2-N} \d{y}}
\label{2PPhiDef}
\end{equation}
is monotonically non-decreasing in $r$.
\label{HFBP}
\end{thm}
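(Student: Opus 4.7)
The plan is to follow the classical Alt--Caffarelli--Friedman argument: reduce to a logarithmic derivative, bound each factor by a spherical eigenvalue, and conclude with the Friedland--Hayman rearrangement inequality. Set
\[
I_i\pr{r} = \int_{B_r} \abs{\gr v_i\pr{y}}^2 \abs{y}^{2-N} \d{y}, \qquad i = 1, 2,
\]
so that $\phi\pr{r;v} = r^{-4} I_1\pr{r} I_2\pr{r}$. Since $v_i\pr{0}=0$ and $v_i$ is non-negative and subharmonic, the ACF Lipschitz estimate makes each $I_i\pr{r}$ finite, and the co-area formula gives $I_i'\pr{r} = r^{2-N} \int_{\del B_r} \abs{\gr v_i}^2 \d{S}$. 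Taking the logarithmic derivative,
\[
\frac{r \phi'\pr{r}}{\phi\pr{r}} = \frac{r I_1'\pr{r}}{I_1\pr{r}} + \frac{r I_2'\pr{r}}{I_2\pr{r}} - 4,
\]
so it suffices to produce positive exponents $\ga_i\pr{r}$ satisfying the two companion inequalities $r I_i'\pr{r}/I_i\pr{r} \ge 2\ga_i\pr{r}$ and $\ga_1\pr{r} + \ga_2\pr{r} \ge 2$.

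For the first inequality, I would fix $r$ and introduce the spherical set $\Omega_i\pr{r} = \set{\theta \in S^{N-1} : v_i\pr{r\theta} > 0}$, let $\la_i\pr{r}$ denote the first Dirichlet eigenvalue of the spherical Laplacian on $\Omega_i\pr{r}$, and define $\ga_i\pr{r}$ as the positive solution of $\ga\pr{\ga + N - 2} = \la_i\pr{r}$. This $\ga_i\pr{r}$ is the homogeneity of the unique positive harmonic function on the open cone over $\Omega_i\pr{r}$ whose angular trace is the first eigenfunction. Decomposing $\abs{\gr v_i}^2 = \pr{\del_r v_i}^2 + r^{-2} \abs{\gr_\theta v_i}^2$ on $\del B_r$, using the Rayleigh-quotient characterization of $\la_i\pr{r}$ to bound the tangential piece, and invoking $\LP v_i \ge 0$ together with integration by parts to handle the radial piece, one arrives at the standard characteristic-exponent estimate $r I_i'\pr{r}/I_i\pr{r} \ge 2\ga_i\pr{r}$.

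For the second inequality, observe that $v_1 v_2 \equiv 0$ forces $\Omega_1\pr{r}$ and $\Omega_2\pr{r}$ to be disjoint open subsets of $S^{N-1}$. The Friedland--Hayman inequality then gives $\ga_1\pr{r} + \ga_2\pr{r} \ge 2$, with equality exactly when each $\Omega_i\pr{r}$ is a hemisphere. Combining,
\[
\frac{r \phi'\pr{r}}{\phi\pr{r}} \ge 2\pr{\ga_1\pr{r} + \ga_2\pr{r}} - 4 \ge 0,
\]
which is the asserted monotonicity.

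The main obstacle is the Friedland--Hayman inequality itself: it is a non-trivial spherical symmetrization result, proved by spherical cap rearrangement together with the corresponding monotonicity of $\ga$ under such rearrangement, and I would quote it as a black box. Beyond that, the remaining work is a routine but delicate computation in polar coordinates; the subharmonicity hypothesis enters only to ensure that the Rayleigh-quotient estimate on the tangential gradient survives the extra non-positive contribution arising from $\LP v_i \ge 0$.
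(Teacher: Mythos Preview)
Your proposal is correct and follows essentially the same route as the paper: the paper does not prove Theorem~\ref{HFBP} directly but embeds the argument in the proof of Corollary~\ref{HFBPCor}, where it likewise reduces to a logarithmic-derivative inequality, bounds each factor via the spherical Rayleigh quotient and the characteristic exponent $\ga_i$ with $\ga_i(\ga_i+N-2)=1/\al_i$, and closes with Friedland--Hayman. The only cosmetic difference is that the paper invokes Friedland--Hayman through the explicit convex function $\psi$ of \eqref{psiFuncDef} and a convexity step, whereas you cite the inequality $\ga_1+\ga_2\ge 2$ as a black box.
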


In the proof of the parabolic version of this theorem, given below, we employ the following non-homogeneous version of this result.

\begin{cor}
Let $v_1, v_2$ be two non-negative functions that belong to $C^0\pr{B_R}\cap H^{1,2}\pr{B_R}$.
Assume that $\LP v_1 \ge h_1$, $\LP v_2 \ge h_2$, $v_1 v_2 \equiv 0$ and $v_1\pr{0} = v_2\pr{0} = 0$.
Assume further that for every $r < R$, $\Ga_{1, r} := \supp v_1 \cap \del B_r$ and $\Ga_{2, r} := \supp v_2 \cap \del B_r$ have non-zero measure.
Then for all $r < R$, if we define $\phi\pr{r; v}$ as in \eqref{2PPhiDef}, then
\begin{align*}
\phi^\prime\pr{r; v}
&\ge \frac{2 }{r^4} \psi\pr{s_{1,r}}  \pr{ \int_{B_r}  v_1 h_1 \abs{y}^{2-N} \d{y} } \pr{ \int_{B_r} \abs{\gr v_2 }^2\abs{y}^{2-N} \d{y}}  \\
&+ \frac{2 }{r^4} \psi\pr{s_{2,r}}  \pr{ \int_{B_r} \abs{\gr v_1}^2 \abs{y}^{2-N} \d{y} } \pr{ \int_{B_r}  v_2  h_2 \abs{y}^{2-N}  \d{y} },
\end{align*}
where $\psi$ is given below in \eqref{psiFuncDef} and $\disp s_{i,r} := \frac{\abs{\Ga_{i, r}}}{r^{N-1} \abs{S^{N-1}}}$ for $i = 1,2$.
\label{HFBPCor}
\end{cor}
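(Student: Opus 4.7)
The plan is to mirror the original Alt--Caffarelli--Friedman proof of Theorem \ref{HFBP}, tracking every place where the subharmonicity $\Delta v_i \geq 0$ was invoked and replacing it by the weaker hypothesis $\Delta v_i \geq h_i$. The correction terms that arise should reproduce the two stated $v_i h_i$ contributions. Setting $I_i(r) = \int_{B_r} |\nabla v_i|^2 |y|^{2-N} dy$, we have $\phi(r) = r^{-4} I_1(r) I_2(r)$ and
\[
\frac{r \phi'(r)}{\phi(r)} = -4 + \frac{r I_1'(r)}{I_1(r)} + \frac{r I_2'(r)}{I_2(r)},
\]
so the whole task reduces to obtaining a suitable lower bound for each $r I_i'(r)/I_i(r)$.

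First I would derive the weighted integration-by-parts identity from $\nabla \cdot (|y|^{2-N} v_i \nabla v_i)$, exploiting the fact that the radial field $|y|^{-N} y$ is divergence-free away from the origin to manage the weight. After justifying the limit at $y = 0$ using $v_i(0) = 0$, this yields
\[
r^{2-N} \int_{\partial B_r} v_i \partial_r v_i \, dS = I_i(r) + \int_{B_r} v_i \Delta v_i |y|^{2-N} dy + \frac{2-N}{2} r^{1-N} \int_{\partial B_r} v_i^2 \, dS.
\]
Since $v_i \geq 0$ and $\Delta v_i \geq h_i$, the middle term is bounded below by $\int_{B_r} v_i h_i |y|^{2-N} dy$; thus every appeal to subharmonicity in the classical argument now acquires an extra additive piece of exactly this form.

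Next I would combine this identity with the polar-coordinates expression $I_i'(r) = r^{2-N} \int_{\partial B_r} |\nabla v_i|^2 \, dS$, apply Cauchy--Schwarz on $\partial B_r$ to pair $v_i$ with $\partial_r v_i$, and invoke the first Dirichlet eigenvalue inequality on the normalized spherical support $\Gamma_{i,r}/r \subset S^{N-1}$ to bound the tangential part of $|\nabla v_i|^2$ below by $\lambda(s_{i,r}) \int v_i^2$. In the classical ACF chain this produces $r I_i'(r)/I_i(r) \geq 2\psi(s_{i,r})$ with $\psi$ the characteristic-exponent function built from $\lambda$ as in \eqref{psiFuncDef}. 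Propagating the $h_i$-corrected identity from the previous step through the same chain upgrades this to an inequality of the form $r I_i'(r)/I_i(r) \geq 2\psi(s_{i,r}) + 2 r \psi(s_{i,r}) \int_{B_r} v_i h_i |y|^{2-N} dy / I_i(r)$.

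Finally, I would feed the two bounds back into the formula for $r \phi'/\phi$. The leading homogeneous contribution $-4 + 2\psi(s_{1,r}) + 2\psi(s_{2,r})$ is nonnegative by the Friedland--Hayman inequality $\psi(s_{1,r}) + \psi(s_{2,r}) \geq 2$, which applies because the disjointness of the supports forces $s_{1,r} + s_{2,r} \leq 1$. Multiplying back by $\phi/r = r^{-5} I_1 I_2$, the remaining cross terms become exactly the claimed lower bound $2 r^{-4} \psi(s_{i,r}) \bigl(\int_{B_r} v_i h_i |y|^{2-N}\bigr) \bigl(\int_{B_r} |\nabla v_j|^2 |y|^{2-N}\bigr)$ for $(i,j) = (1,2)$ and $(2,1)$. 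The principal obstacle I anticipate is the bookkeeping in Step 3: determining precisely which copy of $I_i$ in the original ACF argument should be replaced by its boundary-integral expression so that the factors of $\psi(s_{i,r})$ and the constant $2$ emerge in exactly the stated form, and carefully justifying the weighted boundary integrations near $y = 0$ under only the hypothesis $v_i(0) = 0$.
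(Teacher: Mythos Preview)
Your proposal is correct and follows essentially the same approach as the paper: both track the ACF argument, derive the weighted integration-by-parts inequality (the paper's display \eqref{grv+Est}) with the extra $\int_{B_r} v_i h_i |y|^{2-N}$ term coming from $\Delta v_i \ge h_i$, combine Cauchy--Schwarz with the spherical Rayleigh quotient to bound the boundary integrals by the characteristic exponent $\gamma_i$, and then invoke Friedland--Hayman to kill the homogeneous part $-4 + 2\gamma_1 + 2\gamma_2 \ge 0$. The only cosmetic difference is that the paper normalizes to $r=1$ via the rescaling $v_{i,r}(y) = r^{-1} v_i(ry)$, $h_{i,r}(y) = r h_i(ry)$ and then transports back, whereas you organize the computation through the logarithmic derivative $r\phi'/\phi$; the paper also inserts a mollification step $v_{i,m} = \rho_{1/m} * v_i$ to justify the integration by parts before passing to the limit, which addresses exactly the regularity concern you flag at the end.
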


The proof of this corollary follows that of the proof of Theorem \ref{HFBP}, but we include it here for completeness.

\begin{proof}
Let $v_{1,m} = \rho_{1/m} * v_1$, where $\rho_\eps$ denotes the standard mollifier.
Set $h_{1, m} = \rho_{1/ m} * h_1$.
Then $\LP v_{1, m} \ge h_{1, m}$ and therefore $\LP \pr{v_{1, m}^2} \ge 2 \abs{\gr v_{1, m}}^2 + 2 v_{1, m} h_{1, m}$.
Hence, for $0 < \eps << r$,
\begin{align*}
& 2 \int_{B_r \setminus B_\eps} \abs{\gr v_{1,m}}^2 \abs{y}^{2-N}  \d{y} \\
&\le \int_{B_r \setminus B_\eps} \LP\pr{ v_{1,m}^2} \abs{y}^{2-N}  \d{y}
- 2 \int_{B_r \setminus B_\eps}  v_{1,m} h_{1,m} \abs{y}^{2-N}  \d{y} \\
&=  2 r^{1- N}  \int_{\del B_r } v_{1,m} \pr{y \cdot \gr v_{1,m}} \d{S}\pr{y}
+  \pr{N - 2} r^{1-N}  \int_{ \del B_r } \abs{ v_{1,m}}^2 \d{S}\pr{y} \\
&- 2 \int_{B_r \setminus B_\eps}  v_{1,m} h_{1,m} \abs{y}^{2-N}  \d{y}
- I_\eps ,
\end{align*}
where
$$I_\eps = 2 \eps^{1 -N } \int_{\del B_\eps } v_{1,m} \pr{y \cdot \gr v_{1,m}} \d{S}\pr{y}
+ \pr{N - 2} \eps^{ 1 - N} \int_{ \del B_\eps } \abs{ v_{1,m}}^2  \d{S}\pr{y} .$$
Since $\gr v_{1,m}$ is bounded, then $I_\eps \to \pr{N-2} \abs{S^{N-1}} v_{1, m}\pr{0}^2$ as $\eps \to 0$.
Therefore, integrating from $r_0$ to $r_0 + \de$, dividing through by $\de$ and taking the limit as $m \to \iny$, we see that
\begin{align*}
& \frac{2}{\de} \int_{r_0}^{r_0 + \de} dr \int_{B_r \setminus B_\eps} \abs{\gr v_1}^2 \abs{y}^{2-N}  \d{y} \\
&\le \frac{2}{\de} \int_{r_0}^{r_0 + \de} r^{1- N}  dr \int_{\del B_r } v_1 \pr{ y \cdot \gr v_1} \d{S}\pr{y} \\
&+ \frac{N- 2}{\de} \int_{r_0}^{r_0 + \de} r^{1-N} dr \int_{ \del B_r } \abs{ v_1}^2 \d{S}\pr{y} 
- \frac{2}{\de} \int_{r_0}^{r_0 + \de} dr \int_{B_r \setminus B_\eps}  v_1 h_1 \abs{y}^{2-N}  \d{y}.
\end{align*}
Letting $\de \to 0$, it follows that for a.e. $r_0$,
\begin{align*}
&2 \int_{B_{r_0} \setminus B_\eps} \abs{\gr v_1}^2 \abs{y}^{2-N}  \d{y}  \\
&\le 2 r_0^{1- N} \int_{\del B_{r_0} } v_1 \, y \cdot \gr v_1 \d{S}\pr{y}
+ \pr{N- 2} r_0^{1-N} \int_{ \del B_{r_0} } \abs{ v_1}^2 \d{S}\pr{y} \\
&- 2 \int_{B_{r_0} \setminus B_\eps}  v_1 h_1 \abs{y}^{2-N}  \d{y}.
\end{align*}
Therefore, for a.e. $r$, we have
\begin{align}
&\int_{B_r} \abs{\gr v_1}^2 \abs{y}^{2-N}  \d{y}
\le r^{1- N} \int_{\del B_r } v_1 \, y \cdot \gr v_1 \d{S}\pr{y}  \nonumber \\
&+ \frac{N- 2}{2} r^{1-N} \int_{ \del B_r } \abs{ v_1}^2 \d{S}\pr{y} 
- \int_{B_r}  v_1 h_1 \abs{y}^{2-N}  \d{y}.
\label{grv+Est}
\end{align}
Moreover, for a.e. $r$,
\begin{align*}
\frac{d}{dr} \int_{B_r} \abs{\gr v_1}^2 \abs{y}^{2 - N} \d{y} 
= r^{2 - N} \int_{\del B_r} \abs{\gr v_1}^2  \d{S}\pr{y}.
\end{align*}
Analogous statements may be made with $v_2$ in place of $v_1$ and therefore, for a.e. r,
\begin{align}
&\phi^\prime\pr{r; v} 
=- \frac{4}{r^5} \pr{ \int_{B_r}  \abs{\gr v_1}^2 \abs{y}^{2-N} \d{y} } \pr{ \int_{B_r} \abs{\gr v_2}^2 \abs{y}^{2-N} \d{y}}  
\nonumber \\
&+ \frac{r^{2-N}}{r^4} \pr{  \int_{\del B_r}  \abs{\gr v_1}^2 \d{S}\pr{y} } \pr{ \int_{B_r} \abs{\gr v_2}^2 \abs{y}^{2-N} \d{y}} \nonumber \\
&+ \frac{r^{2-N}}{r^4} \pr{ \int_{B_r}  \abs{\gr v_1}^2 \abs{y}^{2-N} \d{y} } \pr{ \int_{\del B_r} \abs{\gr v_2}^2 \d{S}\pr{y} }.
\label{phiPrime}
\end{align}

We now want to estimate this derivative above.
Assume first that $r =1$.
Let $\gr_\te w$ denote the gradient of  function $w$ on $S^{N-1}$, the unit sphere.
Let $\Ga_i$ denote the support of $v_i$ on $S^{N-1}$ for $i = 1, 2$. 
By assumption, the measures of ${\Ga_1}$ and ${\Ga_2}$ are non-zero.

For $i = 1,2 $, define
\begin{align*}
\frac{1}{\al_i} = \inf_{w \in H^{1,2}_0\pr{\Ga_i}} \frac{\int_{\Ga_i} \abs{\gr_\te w}^2 }{\int_{\Ga_{i}} w^2}.
\end{align*}
Then for any $\be_1 \in \pr{0,1}$,
\begin{align*}
&\frac{2 \be_1}{\sqrt{\al_1}} \int_{\del B_1} \abs{v_1} \abs{ y \cdot \gr v_1} \d{S}\pr{y} \\
&\le 2 \pr{ \frac{\be_1^2}{\al_1} \int_{\del B_1} \abs{v_1}^2 \d{S}\pr{y}}^{\frac 1 2} \pr{  \int_{\del B_1} \pr{y \cdot \gr v_1}^2 \d{S}\pr{y} }^{\frac 1 2} \\
&\le 2 \pr{ \be_1^2  \int_{\del B_1} \abs{\gr_\te v_1}^2 \d{S}\pr{y}}^{\frac 1 2} \pr{ \int_{\del B_1} \pr{ \gr_r v_1}^2 \d{S}\pr{y} }^{\frac 1 2} \\
&\le \int_{\del B_1} \brac{ \be_1^2  \abs{\gr_\te v_1}^2 +\pr{ \gr_r v_1}^2 }\d{S}\pr{y} 
\end{align*}
and
\begin{align*}
\frac{1 - \be_1^2}{\al_1} \int_{\del B_1} \abs{v_1}^2 \d{S}\pr{y} 
\le \pr{1 - \be_1^2} \int_{\del B_1} \abs{\gr_\te v_1}^2 \d{S}\pr{y} .
\end{align*}
If we set 
\begin{equation}
\frac{1 - \be_{i}^2}{\al_i} = \pr{N-2} \frac{\be_i}{\sqrt{\al_i}}
\label{albeRel}
\end{equation}
for $i = 1,2$, then by combining \eqref{grv+Est} with the last two inequalities, we have
\begin{align*}
&\frac{2\be_1}{\sqrt{\al_1}} \int_{B_1} \abs{\gr v_1}^2 \abs{y}^{2-N}  \d{y}
+ \frac{2\be_1}{\sqrt{\al_1}} \int_{B_1}  v_1 h_1 \abs{y}^{2-N}  \d{y} \\
&\le \frac{2\be_1}{\sqrt{\al_1}} \int_{\del B_1} \abs{v_1 } \abs{ y \cdot \gr v_1} \d{S}\pr{y} 
+ \pr{N-2} \frac{\be_i}{\sqrt{\al_i}} \int_{\del B_1} \abs{v_1}^2 \d{S}\pr{y} \\
&\le \int_{\del B_1} \abs{\gr v_1}^2 \d{S}\pr{y}  .
\end{align*}
The same bound holds with $u_2$, $\al_2$, and $\be_2$ in place of $u_1$, $\al_1$ and $\be_1$, respectively.
Substituting these inequalities into \eqref{phiPrime} gives
\begin{align*}
\phi^\prime\pr{1; v} 
&\ge 2\pr{\frac{\be_1}{\sqrt{\al_1}} + \frac{\be_2}{\sqrt{\al_2}} - 2} \pr{ \int_{B_1}  \abs{\gr v_1}^2 \abs{y}^{2-N} \d{y} } \pr{ \int_{B_1} \abs{\gr v_2}^2 \abs{y}^{2-N} \d{y}}  \\
&+2 \frac{\be_1}{\sqrt{\al_1}} \pr{ \int_{B_1}  v_1 \, h_1 \abs{y}^{2-N} \d{y} } \pr{ \int_{B_1} \abs{\gr v_2}^2 \abs{y}^{2-N} \d{y}} \\
&+ 2\frac{\be_2}{\sqrt{\al_2}} \pr{ \int_{B_1}  \abs{\gr v_1}^2 \abs{y}^{2-N} \d{y} } \pr{ \int_{B_1}  v_2 \,h_2 \abs{y}^{2-N}  \d{y} }.
\end{align*}
The relation \eqref{albeRel} is satisfied when
$$\frac{\be_i}{\sqrt{\al_i}} = \frac 1 2 \set{\brac{\pr{N-2}^2 + \frac{4}{\al_i}}^{\frac 1 2} - \pr{N-2}}.$$
If we define $\ga_i > 0$ so that $\disp \ga_i \pr{\ga_i + N - 2} = \frac 1 {\al_i}$ then $\disp \frac{\be_i}{\sqrt{\al_i}} = \ga_i$ for $i = 1,2$.

As a function that acts on subsets of $S^{N-1}$, $\ga$ was studied in \cite{FH76} and it was shown that $\disp \ga\pr{E} \ge \psi\pr{\frac{\abs{E}}{\abs{S^{N-1}}}}$, where $\psi$ is the decreasing, convex function defined by
\begin{equation}
\psi\pr{s} = \left\{\begin{array}{ll} \frac 1 2 \log\pr{\frac 1 {4s}} + \frac 3 2 & \text{ if } s < \frac 1 4 \\
2 \pr{1 - s} & \text{ if } \frac 1 4 < s < 1.  \end{array}  \right.
\label{psiFuncDef}
\end{equation}
We use the notation $\ga_i = \ga\pr{\Ga_i}$ for $i = 1,2$. 
With $\disp s_i = \frac{\abs{\Ga_i}}{\abs{S^{N-1}}}$, it follows from convexity that
\begin{align*}
\ga_1 + \ga_2
\ge \psi\pr{s_1} + \psi\pr{s_2}
\ge 2\psi\pr{\frac{s_1 + s_2}{2}}
\ge 2\psi\pr{\frac{1}{2}}
= 2.
\end{align*}
Therefore,
\begin{align*}
\phi^\prime\pr{1; v} 
&\ge 2 \psi\pr{s_1} \pr{ \int_{B_1}  v_1 \, h_1 \abs{y}^{2-N} \d{y} } \pr{ \int_{B_1} \abs{\gr v_2}^2 \abs{y}^{2-N} \d{y}} \\
&+ 2 \psi\pr{s_2} \pr{ \int_{B_1}  \abs{\gr v_1}^2 \abs{y}^{2-N} \d{y} } \pr{ \int_{B_1}  v_2 \, h_2 \abs{y}^{2-N}  \d{y} }.
\end{align*}

For values of $r \ne 1$, define $v_{i, r}\pr{y} = r^{-1} v_i\pr{r y}$ for $i = 1, 2$, 
so that
\begin{align*}
\phi\pr{1; v_r}
&=  \pr{ \int_{B_1}  \abs{\gr v_{1,r}\pr{y}}^2 \abs{y}^{2-N} \d{y} } \pr{ \int_{B_1} \abs{\gr v_{2,r}\pr{y}}^2 \abs{y}^{2-N} \d{y}} \\
&= \frac{1}{r^4} \pr{ \int_{B_r}  \abs{\gr v_1}^2 \abs{y}^{2-N} \d{y} } \pr{ \int_{B_r} \abs{\gr v_2}^2 \abs{y}^{2-N} \d{y} }
= \phi\pr{r, v}.
\end{align*}
With $h_{i, r}\pr{y} = r h_i\pr{r y}$
, we have $\LP v_{i, r} \ge h_{i, r}$.
Let $\Ga_{i, r}$ denote the support of  $v_{i, r}$ on $\del B_r$ and set $\disp s_{i, r} = \frac{\abs{\Ga_{i, r}}}{r^{N-1} \abs{S^{N-1}}}$ for $i = 1, 2$.
Applying the derivative estimates above to the pair $v_{1, r}$, $v_{2, r}$ then rescaling leads to the conclusion.
\end{proof}

Motivated by its application to the regularity theory of two-phase free boundary elliptic problems, the parabolic analogue of the monotonicity formula due to Alt-Cafferelli-Friedman was proved by Cafarelli in \cite{Caf93}.
This two-phase monotonicity formula was extended by Cafferelli and Kenig in \cite{CK98} and used to prove uniform Lipschitz estimates for solutions to singular perturbations of variable coefficient parabolic free boundary problems, where the linear parabolic operators are second-order divergence form with Dini top order coefficients.

\begin{thm}[\cite{Caf93}, Theorem 1]
Let $u_1, u_2$ be two non-negative functions that belong to $C^0\pr{\R^d \times \pr{0,T}}\cap H^{1,2}\pr{\R^d \times \pr{0,T}}$.
Assume that $\LP u_1 + \del_t u_1 \ge 0$, $\LP u_2 + \del_t u_2 \ge 0$, $u_1 u_2 \equiv 0$ and $u_1\pr{0,0} = u_2\pr{0,0} = 0$.
Assume also that $u_1$ and $u_2$ have moderate growth at infinity.
Let $G_t\pr{x}$ be as given in \eqref{GDefn}.
Then for all $\tau < T$,
\begin{equation*}
\Phi\pr{\tau; u} = \frac{1}{\tau^2} \pr{\int_0^\tau \int_{\R^d}  \abs{\gr u_1}^2 G_t\pr{x} \d{x} \, \d{t}} \pr{\int_0^\tau \int_{\R^d}  \abs{\gr u_2}^2 G_t\pr{x} \d{x} \, \d{t}},
\end{equation*}
is monotonically non-decreasing in $\tau$.
\label{PFBP}
\end{thm}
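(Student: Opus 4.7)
The plan is to apply the high-dimensional limit strategy used throughout the paper, starting from the non-homogeneous elliptic two-phase monotonicity formula, Corollary~\ref{HFBPCor}. Given $u_1, u_2$ as in the statement, for each $n \in \N$ set
$$v_{i,n}\pr{y} = u_i\pr{F_{n,d}\pr{y}}, \qquad i = 1, 2.$$
By \eqref{Laplace}, $\LP v_{i,n} = n\pr{\LP u_i + \del_t u_i} + \tfrac{2}{d}\pr{x,t} \cdot \gr_{\pr{x,t}}\pr{\del_t u_i}$, and since $\LP u_i + \del_t u_i \ge 0$, we obtain $\LP v_{i,n} \ge h_{i,n}$ where $h_{i,n}\pr{y} := \tfrac{2}{d}\brac{\pr{x,t} \cdot \gr_{\pr{x,t}}\pr{\del_t u_i}}\pr{F_{n,d}\pr{y}}$. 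The conditions $v_{1,n} v_{2,n} \equiv 0$ and $v_{i,n}\pr{0} = u_i\pr{0,0} = 0$ are immediate, so Corollary~\ref{HFBPCor} applies to $(v_{1,n}, v_{2,n})$ on any ball $B_\tau^n \subset \R^{nd}$ with $\tau < T$, after a standard cutoff argument that uses the moderate-growth hypothesis to handle the lack of compact support.

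Next, using \eqref{grSq} together with Lemma~\ref{PFT}, I would translate each elliptic integral into the corresponding parabolic one:
$$\int_{B_\tau^n}\abs{\gr v_{i,n}}^2 \abs{y}^{2 - nd}\,\d{y} = nd\,\abs{S^{nd-1}}\int_0^\tau\!\!\int_{\R^d}\abs{\gr u_i}^2 G_n\,\d{x}\,\d{t} + 2\abs{S^{nd-1}}\int_0^\tau\!\!\int_{\R^d}\brac{\pr{x,t}\cdot \gr_{\pr{x,t}}u_i}\del_t u_i \,G_n\,\d{x}\,\d{t}.$$
Taking $r = \sqrt{2d\tau}$ in the definition of $\phi$, Lemma~\ref{GnLimit}, Lemma~\ref{intLemma2}, and the dominated convergence theorem yield
$$\frac{1}{\pr{nd\,\abs{S^{nd-1}}}^2}\,\phi\pr{\sqrt{2d\tau};\, v_n} \longrightarrow \frac{1}{\pr{2d}^2}\,\Phi\pr{\tau; u} \qquad \text{as } n \to \iny,$$
so the parabolic monotonicity will follow from monotonicity, in the limit, of this rescaled quantity.

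To obtain monotonicity of the limit, integrate Corollary~\ref{HFBPCor}: for $0 < \tau_1 < \tau_2 < T$,
$$\phi\pr{\sqrt{2d\tau_2};\, v_n} - \phi\pr{\sqrt{2d\tau_1};\, v_n} \ge \int_{\tau_1}^{\tau_2} E_n\pr{\tau}\,\d{\tau},$$
where $E_n(\tau)$ collects the two perturbation terms from the corollary. Translating via Lemma~\ref{PFT}, each factor of the form $\int v_{i,n} h_{i,n}\abs{y}^{2-nd}\,\d{y}$ is of order $\abs{S^{nd-1}}$ (no $n$-prefactor, since $h_{i,n}$ does not carry one), whereas each $\int\abs{\gr v_{j,n}}^2\abs{y}^{2-nd}\,\d{y}$ is of order $n\abs{S^{nd-1}}$. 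Hence after dividing through by $\pr{nd\,\abs{S^{nd-1}}}^2$, each perturbation term has size $\psi\pr{s_{i,r}}/n$ times quantities that converge to finite parabolic integrals. Passing $n \to \iny$ then gives $\Phi\pr{\tau_2; u} \ge \Phi\pr{\tau_1; u}$, as required.

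The main obstacle is therefore the control of $\psi\pr{s_{i,r}}$ as $n \to \iny$. Since $\psi\pr{s} \sim \log\pr{1/s}$ near $s = 0$, I must argue that $s_{i,\sqrt{2d\tau}}$, the normalized surface measure of $\set{y \in S_\tau^n : v_{i,n}\pr{y} > 0}$, stays bounded below by a positive constant. By the pushforward identity underlying Lemma~\ref{PFTS}, this quantity equals $\int_{\set{u_i\pr{\cdot,\tau} > 0}} G_{t,n}\,\d{x}$, and Lemma~\ref{GnLimit} with dominated convergence identifies the limit with $\int_{\set{u_i\pr{\cdot,\tau} > 0}} G_\tau\,\d{x} > 0$, provided the phase $u_i\pr{\cdot,\tau}$ is non-trivial on that slice. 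On any degenerate slice the parabolic Dirichlet energy $\int_{\R^d}\abs{\gr u_i}^2 G_\tau\,\d{x}$ vanishes as well, so that slice contributes nothing to $\Phi\pr{\tau; u}$ and can be treated separately.
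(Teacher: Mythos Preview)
Your proposal is essentially the paper's proof: the same lifting $v_{i,n}=u_i\circ F_{n,d}$, the same non-homogeneous elliptic input (Corollary~\ref{HFBPCor}), the same pushforward bookkeeping via Lemmas~\ref{PFTS} and \ref{PFT}, and the same key observation that $s_{i,n,\tau}\to\int_{\{u_i(\cdot,\tau)>0\}}G_\tau\,\d{x}>0$, so that $\psi\pr{s_{i,n,\tau}}/n\to 0$ and the perturbation terms die in the limit.

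The one place where your sketch falls short of the paper is the degenerate case. Saying that the Dirichlet energy on a degenerate slice vanishes, hence ``that slice contributes nothing to $\Phi(\tau;u)$,'' does not address monotonicity through such a slice: what you need is control of the full quantity $\Phi(t;u)$ for $t$ on either side. The paper's argument is different and more substantive. It takes $\tau$ to be the \emph{largest} time with, say, $\abs{\supp u_1(\cdot,\tau)}=0$, notes that then $v_{1,n}$ vanishes on all of $\del B_\tau^n$, applies the boundary-free version of estimate~\eqref{grv+Est} to get
\[
\int_{B_\tau^n}\abs{\gr v_{1,n}}^2\abs{y}^{2-nd}\,\d{y}\le -\int_{B_\tau^n} v_{1,n}h_{1,n}\abs{y}^{2-nd}\,\d{y},
\]
translates both sides by Lemma~\ref{PFT}, and lets $n\to\iny$; the right-hand side is $O(1/n)$, so $\int_0^\tau\!\int_{\R^d}\abs{\gr u_1}^2 G\,\d{x}\,\d{t}=0$ and hence $\Phi(t;u)\equiv 0$ for $t\le\tau$. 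Monotonicity on $(\tau,T)$ then follows from your non-degenerate argument. You should replace your last sentence with this argument.
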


We reprove this theorem using only Corollary \ref{HFBPCor} and the tools developed in the early sections of this article.

\begin{proof}
Let $u_1$, $u_2$ be as in the statement of the theorem.
For each $n$, define $v_{1, n}, v_{2, n} : B_{T}^n \su \R^{n \cdot d} \to \R$ so that
$$v_{1, n}\pr{y} = u_1\pr{F_{n,d}\pr{y}}, \quad v_{2, n}\pr{y} = u_2\pr{F_{n,d}\pr{y}} .$$
For every $n$, the pair $v_{1, n}$, $v_{2, n}$ is continuous and non-negative.
Moreover, $v_{1,n}\pr{0} = u_1\pr{0,0} = 0$, $v_{2,n}\pr{0} = u_2\pr{0,0} = 0$ and $v_{1,n} v_{2,n} \equiv 0$.
Then, by \eqref{Laplace}
\begin{align*}
\LP v_{1, n} 
&= n \pr{\LP u_1 + \der{u_1}{t}} + \frac{2}{d} \pr{x, t} \cdot \gr_{\pr{x,t}} \pr{\del_t{u_1} }
\ge \frac{2}{d} \pr{x, t} \cdot \gr_{\pr{x,t}} \pr{\del_t{u_1} } \\
&=: J_1\pr{x,t} \\
\LP v_{2, n} 
&= n \pr{\LP u_2 + \der{u_2}{t}} + \frac{2}{d} \pr{x, t} \cdot \gr_{\pr{x,t}} \pr{\del_t{u_2} }
\ge \frac{2}{d} \pr{x, t} \cdot \gr_{\pr{x,t}} \pr{\del_t{u_2} } \\
&=: J_2\pr{x,t}.
\end{align*}
Define $h_{1, n}, h_{2, n} : B_{T}^n \to \R$ so that 
$$h_{1, n}\pr{y} = J_1\pr{F_{n,d}\pr{y}}, \quad h_{2, n}\pr{y} = J_2\pr{F_{n,d}\pr{y}}.$$
Then $\LP v_{1, n} \ge h_{1, n}$ and $\LP v_{2, n} \ge h_{2, n}$ so we may apply Corollary \ref{HFBPCor}.

Let $\Ga_{1, n, \tau} = \supp v_{1, n} \cap S_\tau^n$ and let $\Ga_{2, n, \tau} = \supp v_{2, n} \cap S_\tau^n$.
For each $t$, the measure of $\supp u_1\pr{\cdot, t}$ vanishes if and only if the measure of $\Ga_{1, n, t}$ vanishes for every $n$.
Similarly, the measure of $\supp u_2\pr{\cdot, t}$ vanishes if and only if the measure of $\Ga_{2, n, t}$ vanishes for every $n$.
We'll assume first that for every $t$, the measures of $\supp u_1\pr{\cdot, t}$ and $\supp u_2\pr{\cdot, t}$ are non-vanishing.
Therefore, for every $i$, $n$ and $t$, $\Ga_{i, n, t}$ has non-zero measure. 
Thus, we may apply Corollary \ref{HFBPCor} to each pair $v_{1, n}$, $v_{2, n}$.

Define $\disp \Phi_n\pr{\tau} = \frac{4 }{\pr{n \abs{S^{n \cdot d - 1}}}^2} \phi\pr{\sqrt{2 d \tau}; v_n}$.
By Lemma \ref{PFT}
\begin{align*}
& \int_{B_\tau^n} \abs{\gr v_{1,n} }^2\abs{y}^{2-n \cdot d} \d{y} \\
&= nd \abs{S^{n \cdot d - 1 }} \int_0^\tau \int_{\R^d} \brac{ \abs{\gr u_1}^2 + \frac{2}{n d} \brac{ {\pr{x,t} \cdot \gr_{\pr{x,t}} u_1} }\del_t{u_1}} G_n\pr{x,t} \d{x} \, \d{t}
\end{align*}
and a similar equality holds for $u_2$.
Therefore,
\begin{align*}
\Phi_n\pr{\tau}
&=  \frac{4 }{\pr{n \abs{S^{n \cdot d - 1}}}^2} \frac{1}{\pr{2 d \tau}^2} \pr{ \int_{B_\tau^n} \abs{\gr v_{1, n}\pr{y}} \abs{y}^{2 - n \cdot d} \d{y} }\times \\
&\times  \pr{ \int_{B_\tau^n} \abs{\gr v_{2, n}\pr{y}} \abs{y}^{2 - n \cdot d} \d{y} } \\
&= \frac{1}{\tau^2} \int_0^\tau \int_{\R^d} \pr{ \abs{\gr u_1}^2 + \frac 2 {nd} \brac{\pr{x, t} \cdot \gr_{\pr{x,t}} u_1} \del_t{u_1}  } G_n\pr{x,t}\d{x} \, \d{t}  \\
&\times \int_0^\tau \int_{\R^d}  \pr{ \abs{\gr u_2}^2 + \frac 2 {nd} \brac{\pr{x, t} \cdot \gr_{\pr{x,t}} u_2} \del_t{u_2}  } G_n\pr{x,t} \d{x} \, \d{t} .
\end{align*}
It follows from Lemma \ref{GnLimit} that
\begin{align}
&\lim_{n \to \iny} \Phi_n\pr{\tau} \nonumber \\
&= \frac{1}{\tau^2} \pr{\int_0^\tau \int_{\R^d} \abs{\gr u_1}^2 G_t\pr{x} \d{x} \, \d{t} } \pr{\int_0^\tau \int_{\R^d} \abs{\gr u_2}^2 G_t\pr{x} \d{x} \, \d{t} } \nonumber \\
&= \Phi\pr{\tau; u}.
\label{PhinLimit}
\end{align}

By Corollary \ref{HFBPCor}
\begin{align*}
\Phi_n^\prime\pr{\tau}
&= \frac{4 }{\pr{n \abs{S^{n \cdot d - 1}}}^2} \phi^\prime\pr{\sqrt{2 d \tau}; v_n} \sqrt{\frac{d}{2 \tau}} \\
&\ge \sqrt{\frac{2d}{\tau}} \frac{1 }{\pr{n d \tau \abs{S^{n \cdot d - 1}}}^2} \psi\pr{s_{1,n,\tau}} \pr{ \int_{B_\tau^n}  v_{1,n} h_{1,n} \abs{y}^{2-n \cdot d} \d{y} } \times \\
&\times \pr{ \int_{B_\tau^n} \abs{\gr v_{2,n} }^2\abs{y}^{2-n \cdot d} \d{y}}  \\
&+  \sqrt{\frac{2d}{\tau}} \frac{1 }{\pr{n d \tau \abs{S^{n \cdot d - 1}}}^2}  \psi\pr{s_{2,n,\tau}} \pr{ \int_{B_\tau^n} \abs{\gr v_{1,n}}^2 \abs{y}^{2-n \cdot d} \d{y} } \times \\
&\times\pr{ \int_{B_\tau^n}  v_{2,n}  h_{2,n} \abs{y}^{2-n \cdot d}  \d{y} },
\end{align*}
where we have set $\disp s_{i, n, \tau} = \pr{2 d \tau}^{-\frac{n \cdot d -1}{2}} \frac{\abs{\Ga_{i, n, \tau}}}{\abs{S^{n \cdot d -1}}}$.  
Lemma \ref{PFT} implies that
\begin{align*}
&\int_{B_\tau^n}  v_{1,n}  h_{1,n} \abs{y}^{2-n \cdot d}  \d{y} \\
&= 2 \abs{S^{n \cdot d - 1}} \int_0^\tau \int_{\R^d} u_1 \brac{\pr{x, t} \cdot \gr_{\pr{x,t}} \pr{\del_t{u_1} }} G_n\pr{x,t} \d{x} \, \d{t}
\end{align*}
and an analogous equality for $u_2$.
Therefore,
\begin{align}
&\Phi_n^\prime\pr{\tau}
\ge \sqrt{\frac{8}{d \tau^5}} \frac{\psi\pr{s_{1,n,\tau}}}{n} \pr{\int_0^\tau \int_{\R^d} u_1 \brac{\pr{x, t} \cdot \gr_{\pr{x,t}} \pr{\del_t{u_1} }} G_n\pr{x,t} \d{x} \, \d{t}} \times \nonumber \\
&\times \pr{\int_0^\tau \int_{\R^d} \brac{ \abs{\gr u_2}^2 + \frac{2}{n d} \brac{ {\pr{x,t} \cdot \gr_{\pr{x,t}} u_2} }\del_t{u_2}} G_n\pr{x,t} \d{x} \, \d{t} } \nonumber \\
&+   \sqrt{\frac{8}{d \tau^5}} \frac{\psi\pr{s_{2,n,\tau}}}{n} \pr{ \int_0^\tau \int_{\R^d} u_2 \brac{\pr{x, t} \cdot \gr_{\pr{x,t}} \pr{\del_t{u_2} }} G_n\pr{x,t} \d{x} \, \d{t} } \times \nonumber \\
&\times  \pr{ \int_0^\tau \int_{\R^d} \brac{ \abs{\gr u_1}^2 + \frac{2}{n d} \brac{ {\pr{x,t} \cdot \gr_{\pr{x,t}} u_1} }\del_t{u_1}} G_n\pr{x,t} \d{x} \, \d{t} }
\label{PhinPrime}
\end{align}

To proceed, we need to examine $\psi\pr{s_{i,n,t}}$.
Let $\chi_1$ denote the indicator function of the support of $u_1$.
Define $\mu_{1, n}$ so that $\mu_{1, n}\pr{y} = \chi_1\pr{F_{n,d}\pr{y}}$.
Then $\mu_{1, n}$ is the indicator function of the support of $v_{1, n}$.
Consequently, by Lemma \ref{PFTS}
\begin{align*}
\abs{\Ga_{1, n, t}} 
&= \abs{\supp v_{1, n} \cap S_t^n} 
= \int_{S_t^n} \mu_{1, n}\pr{y} \si_{n \cdot d -1}^t
= \int_{S_t^n} \chi_1\pr{f_{n,d}\pr{y}, t} \si_{n \cdot d -1}^t\\
&=  \pr{2 d t}^{\frac{n \cdot d - 1}{2}} \abs{S^{n \cdot d - 1}} \int_{\R^d} \chi_1\pr{x, t}G_{t,n}\pr{x} \d{x}
\end{align*}
so that
\begin{align*}
s_{1, n, t} 
&= \int_{\R^d} \chi_1\pr{x, t}G_{t,n}\pr{x} \d{x}. 
\end{align*}
By Lemma \ref{GnLimit}
\begin{align*}
\lim_{n \to \iny} s_{1, n, t} 
&=  \int_{\R^d} \chi_1\pr{x, t} G_t\pr{x} \d{x}. 
\end{align*}
Since we have assumed that for every $t$, $\chi_1\pr{\cdot, t}$ is always non-trivial, then the integral above is non-zero and for every $t$ and for $n$ sufficiently large, $s_{1, n, t}$ is bounded away from zero. 
Therefore, from \eqref{psiFuncDef}, for $n$ sufficiently large, each $\psi\pr{s_{1,n,t}}$ is bounded above.
By the same reasoning, each $\psi\pr{s_{2,n,t}}$ is also bounded from above for $n$ sufficiently large.
It follows from \eqref{PhinPrime} that
\begin{align*}
\lim_{n \to \iny} \Phi_n^\prime\pr{\tau} \ge 0.
\end{align*}
By \eqref{PhinLimit}, the proof is complete under the assumption that the measures of $\supp u_1\pr{\cdot, t}$ and $\supp u_2\pr{\cdot, t}$ are non-vanishing for every $t$.

Now assume that there exists some values of $t$ such that the measure of $\supp u_1\pr{\cdot, t}$ or the measure of $\supp u_2\pr{\cdot, t}$ vanishes.
Let $\tau$ be the largest such $t$-value.
Assume, without loss of generality, that $\abs{\supp u_1\pr{\cdot, \tau}} = 0$.
Then, for every $n$, $\abs{\Ga_{1, n, \tau}} = 0$ as well.
Therefore, for every $n$, $\LP v_{1,n} \ge h_{1, n}$ in $D_{1, n, \tau} := \supp v_{1,n} \cap B_\tau^n$ with $v_{1, n} = 0$ along $\del D_{1, n, \tau}$.
By estimate \eqref{grv+Est} applied to $v_{1,n}$ on $D_{1,n,\tau}$,
\begin{align*}
\int_{D_{1, n \tau}} \abs{\gr v_{1,n}}^2 \abs{y}^{2 - n \cdot d} \d{y} \le - \int_{D_{1, n, \tau}} v_{1,n} h_{1,n} \abs{y}^{2 - n \cdot d} \d{y}
\end{align*}
so that
\begin{align*}
\int_{B_\tau^n} \abs{\gr v_{1,n}}^2 \abs{y}^{2 - n \cdot d} \d{y} 
\le - \int_{B_\tau^n} v_{1,n} h_{1,n} \abs{y}^{2 - n \cdot d} \d{y}.
\end{align*}
It follows from the computations above (based on applications of Lemma \ref{PFT}) that for every $n$,
\begin{align*}
& \int_0^\tau \int_{\R^d} \abs{\gr u_1}^2  G_n\pr{x,t} \d{x} \, \d{t} \\
&\le -\frac{2}{nd} \int_0^\tau \int_{\R^d} \brac{\pr{x, t} \cdot \gr_{\pr{x,t}} \del_t\pr{u_1}^2 } G_n\pr{x,t} \d{x} \, \d{t}
\end{align*}
By Lemma \ref{GnLimit}
\begin{align*}
&\int_0^\tau \int_{\R^d} \abs{\gr u_1}^2 G\pr{x, t} \d{x} \, \d{t}  \\
&= \lim_{n \to \iny}  \int_0^\tau \int_{\R^d} \abs{\gr u_1}^2  G_n\pr{x,t} \d{x} \, \d{t} \\
&\le - \lim_{n \to \iny} \frac{2}{nd} \int_0^\tau \int_{\R^d} \brac{\pr{x, t} \cdot \gr_{\pr{x,t}} \del_t\pr{u_1}^2 } G_n\pr{x,t} \d{x} \, \d{t}
= 0.
\end{align*}
Then $\Phi\pr{t, u} = 0$ for every $t \le \tau$ so that $\Phi\pr{t,u}$ is monotonically non-decreasing on that time interval.
Since $\abs{\Ga_{1, n, t}} \ne 0$ and $\abs{\Ga_{2, n, t}} \ne 0$ for every $n$ and every $t > \tau$, then by the arguments from the first case, $\Phi\pr{t,u}$ is monotonically non-decreasing whenever $t > \tau$, completing the proof.
\end{proof}

\section{Harmonic Maps into Spheres}
\label{HM}

In this section, we use a monotonicity result for harmonic maps to derive the proof of the parabolic analogue.  
To start, we introduce some notation as it appears in the introduction to \cite{E91}.

Let $m, N \ge 2$.  
Let $U \subset \R^N$ be smooth, and $S^{m-1}$ denote the unit sphere in $\R^m$.
A function $\vec{v} = \pr{v^1, \ldots, v^m}$ in the Sobolev space $H^1\pr{U; \R^{m}}$ belongs to the space $H^1\pr{U; S^{m-1}}$ if $\abs{\vec{v}} = 1$ almost everywhere in $U$.

\begin{defn}
A function $\vec{v} \in H^1\pr{U; S^{m-1}}$ is a weakly harmonic mapping of $U$ into $S^{m-1}$ provided
\begin{equation}
- \LP \vec v = \abs{D\vec v}^2 \vec v
\label{HMEq}
\end{equation} 
holds weakly in $U$.  That is, for every test function $\vec w = \pr{w^1, \ldots, w^m} \in H^1_0\pr{U; \R^m} \cap L^\iny\pr{U; \R^m}$, we have
\begin{equation}
\int_U D\vec v : D\vec w \; \d{y} = \int_U \abs{D\vec v}^2 \vec v \cdot \vec w \; \d{y},
\label{weakHM}
\end{equation}
where we use the notation
$$D\vec v = \pr{\pr{\frac{\del v^i}{\del y_k}}}_{1 \le i \le m, 1 \le k \le N}$$
$$D\vec v : D\vec w = \sum_{i=1}^m \sum_{k=1}^N \frac{\del v^i}{\del y_k} \frac{\del w^i}{\del y_k}, \;\;\;\;\;\;\;
\abs{D\vec v}^2 = D\vec v : D\vec v.$$
\end{defn}

Now let $\vec g: \del U \to S^{m-1}$ be a given smooth function and set 
$$\mathcal{A} = \set{w \in H^1\pr{U; S^{m-1}}: \vec w = \vec g \text{ on $\del U$ in the trace sense}}.$$
Then \eqref{HMEq} is the Euler-Lagrange equation for the variational problem of minimizing the Dirichlet energy
$$I\brac{\vec w} = \int_U \abs{D\vec w}^2 \d{y}$$
over all $\vec w \in \mathcal{A}$.  
If $\vec v$ is a minimizer of $I\brac{\cdot}$ over $\mathcal{A}$, then $\vec v$ satisfies \eqref{weakHM} and for every vector field $\boldsymbol{ \zeta} = \pr{\zeta^1, \ldots, \zeta^d} \in C^1_0\pr{U; \R^N}$ there holds
\begin{equation}
\int_U \brac{\abs{D\vec v}^2 \pr{\dv \zeta} - 2 \frac{\del v^i}{\del y_k} \frac{\del v^i}{\del y_l} \frac{\del \zeta^k}{\del y_l}} \d{y} = 0.
\label{vectorId}
\end{equation} 

\begin{defn}
A function $\vec v \in H^1\pr{U; S^{m-1}}$ is said to be a weakly stationary harmonic map from $U$ into the sphere $S^{m-1}$ if $\vec v$ satisfies \eqref{weakHM} and \eqref{vectorId} for all test functions $\vec w$ and $\zeta$ as above.
\end{defn}

One way to understand this definition is that \eqref{weakHM} states that $\vec v$ is stationary with respect to the variations of the target $S^{m-1}$, while \eqref{vectorId} states that $\vec v$ is stationary with respect to variations of the domain $U$.  Note that if $\vec v$ is smooth, then \eqref{vectorId} is an immediate consequence of \eqref{weakHM} by taking $\vec w = D\vec v \cdot \zeta$ .

The following is the monotonicity property for weakly stationary harmonic maps.
For generalizations and important applications to regularity theory, see \cite{SU82}, \cite{S84}.
The presentation here is from \cite{E91}.

\begin{thm}[\cite{E91}]
Suppose $y_0 \in U \subset \R^N$ and $R > 0$ is such that $B\pr{y_0, R} \subset U$. For all $r \in \pr{0,R}$, if $\vec v$ is a weakly stationary harmonic map from $U$ into $S^{m-1}$, then the quantity
\begin{equation}
\phi\pr{r; \vec v} = \frac{1}{r^{N-2}} \int_{B\pr{y_0,r}} \abs{D\vec v\pr{y}}^2 \d{y}
\label{phiHMDef}
\end{equation}
is monotonically non-decreasing in $r$.
\label{HMMono}
\end{thm}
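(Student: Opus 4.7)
The plan is to extract monotonicity of $\phi(r; \vec v)$ directly from the inner variation identity \eqref{vectorId} by feeding it a radial test vector field that will concentrate on $\del B(y_0, r)$. After translating so that $y_0 = 0$, I take $\zeta(y) = \chi(|y|)\, y$, where $\chi$ is a smooth non-increasing cutoff with $\chi \equiv 1$ on $[0, r]$ and $\supp \chi \subset [0, r + \eps]$ for $r + \eps < R$, so that $\zeta \in C^1_0(U; \R^N)$. I will then pass to the limit $\chi \to \chi_{[0, r]}$.

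A routine chain-rule computation gives
\begin{align*}
\dv \zeta &= N \chi(|y|) + |y|\chi'(|y|), \\
\frac{\del v^i}{\del y_k}\frac{\del v^i}{\del y_l}\frac{\del \zeta^k}{\del y_l} &= \chi(|y|)\abs{D\vec v}^2 + |y|\chi'(|y|) \abs{\del_r \vec v}^2,
\end{align*}
where $\del_r \vec v$ denotes the radial derivative. Substituting into \eqref{vectorId} and taking the limit $\chi \to \chi_{[0, r]}$ (justified for a.e.\ $r \in (0, R)$ via the coarea formula applied to $\abs{D\vec v}^2 \in L^1(U)$) produces the fundamental identity
\begin{equation*}
(N - 2)\int_{B(0, r)} \abs{D\vec v}^2 \d{y} - r\int_{\del B(0, r)} \abs{D\vec v}^2 \d{S} + 2r\int_{\del B(0, r)} \abs{\del_r \vec v}^2 \d{S} = 0.
\end{equation*}

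Differentiating $\phi(r; \vec v)$ directly (valid for a.e.\ $r$ since the inner integral is absolutely continuous in $r$) yields
\begin{equation*}
\phi'(r; \vec v) = -(N - 2) r^{-(N - 1)} \int_{B(0, r)} \abs{D\vec v}^2 \d{y} + r^{-(N - 2)}\int_{\del B(0, r)} \abs{D\vec v}^2 \d{S}.
\end{equation*}
Dividing the fundamental identity by $r^{N - 1}$ and substituting, the $\abs{D\vec v}^2$ contributions cancel and I obtain $\phi'(r; \vec v) = 2 r^{-(N - 2)} \int_{\del B(0, r)} \abs{\del_r \vec v}^2 \d{S} \ge 0$. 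Since $\phi(\,\cdot\,; \vec v)$ is absolutely continuous on $(0, R)$, the monotonicity conclusion follows by integration.

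The principal technical obstacle is the limiting step $\chi \to \chi_{[0, r]}$: because $\vec v$ is only assumed to lie in $H^1$, the trace of $\abs{D\vec v}^2$ on the individual sphere $\del B(0, r)$ is not defined a priori for every $r$. This is exactly what the coarea/Fubini argument handles, producing the identity on a full measure set of radii, which suffices because $\phi$ is absolutely continuous. I note that only the inner variation identity \eqref{vectorId} enters; the target equation \eqref{weakHM} plays no role in this particular monotonicity — a reflection of the fact that the quantity $\phi$ is insensitive to variations of $\vec v$ that preserve its image in $S^{m - 1}$.
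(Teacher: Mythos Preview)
Your proof is correct and follows essentially the same route as the paper. The paper does not prove Theorem~\ref{HMMono} directly but instead proves the non-homogeneous Corollary~\ref{HMMonoiH}, whose specialization $\vec H=0$ is exactly your argument: test \eqref{vectorId} with a radial cutoff vector field $\zeta(y)=\mu(|y|)\,y$ (the paper uses a piecewise linear $\mu_h$, you use a smooth $\chi$), pass to the limit to obtain the Pohozaev-type identity on $\partial B_r$, and combine it with the direct differentiation of $\phi$ to get $\phi'(r)=2r^{2-N}\int_{\partial B_r}|\partial_r\vec v|^2\ge 0$. Your coarea/absolute continuity justification for the a.e.\ limiting step is a welcome detail that the paper leaves implicit.
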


To prove the parabolic theorem below, we apply a non-homogeneous version of this result.
We begin with the analogous definitions.

\begin{defn}
A function $\vec{v} \in H^1\pr{U; S^{m-1}}$ is a weak solution to
\begin{equation}
- \LP \vec v = \abs{D\vec v}^2 \vec v + \vec H + h \, \vec v
\label{HMEqiH}
\end{equation} 
if for every test function $\vec w = \pr{w^1, \ldots, w^m} \in H^1_0\pr{U; \R^m} \cap L^\iny\pr{U; \R^m}$, we have
\begin{equation}
\int_U D\vec v : D\vec w \; \d{y} 
= \int_U \pr{\abs{D\vec v}^2 \vec v \cdot \vec w + \vec H \cdot \vec w+ h \, \vec v \cdot \vec w } \d{y}.
\label{weakHMiH}
\end{equation}
\end{defn}

If $\vec v$ is sufficiently smooth, then by taking $\vec w = D\vec v \cdot \zeta$ in \eqref{HMEqiH} we have
\begin{equation}
\int_U \brac{\abs{D\vec v}^2 \pr{\dv \zeta} - 2 \frac{\del v^i}{\del y_k} \frac{\del v^i}{\del y_l} \frac{\del \zeta^k}{\del y_l}} \d{y} 
= \int_U H^i \der{v^i}{y_k} \zeta^k
\label{vectorIdiH}
\end{equation} 
for every vector field $\boldsymbol{ \zeta} = \pr{\zeta^1, \ldots, \zeta^d} \in C^1_0\pr{U; \R^N}$.

Using a variation of the proof presented in \cite{E91}, we show the following version of the monotonicity formula.

\begin{cor}
Suppose $y_0 \in U \subset \R^N$ and $R > 0$ is such that $B\pr{y_0, R} \subset U$. 
Assume that $\vec v: U \to S^{m-1}$ satisfies \eqref{HMEqiH} and \eqref{vectorIdiH}.
Define $\phi\pr{r; \vec v}$ as in \eqref{phiHMDef}.
Then for almost every $r \in \pr{0, R}$,
\begin{align*}
\phi^\prime\pr{r; \vec v} 
&\ge - \frac{1}{r^{N-1}} \int_{B\pr{y_0, r}} \vec H \cdot \brac{\pr{y - y_0} \cdot D \vec v} \, \d{y}.
\end{align*}
\label{HMMonoiH}
\end{cor}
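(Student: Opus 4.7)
The plan is to mimic the proof of Theorem \ref{HMMono} from \cite{E91} while carefully tracking the contribution of the inhomogeneity $\vec H$. I note first that the $h\,\vec v$ term from \eqref{HMEqiH} drops out automatically in the monotonicity computation: the identity $\abs{\vec v}^2 \equiv 1$ forces $\vec v \cdot D\vec v = 0$, which is precisely why $h$ never appears on the right-hand side of \eqref{vectorIdiH}. After translating so that $y_0 = 0$, my first step would be to insert the test vector field $\zeta^k\pr{y} = \eta\pr{\abs{y}} y_k$ into the inhomogeneous stationarity identity \eqref{vectorIdiH}, where $\eta \in C^1_c\pr{[0, R)}$ is a scalar cutoff. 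Direct differentiation yields $\dv \zeta = N\eta + \abs{y}\eta^\pri$ and
$$\der{v^i}{y_k}\der{v^i}{y_l}\der{\zeta^k}{y_l} = \eta \abs{D\vec v}^2 + \frac{\eta^\pri}{\abs{y}}\abs{y \cdot D\vec v}^2,$$
where $\pr{y \cdot D\vec v}^i := y_k\, \del_k v^i$. Substituting into \eqref{vectorIdiH} produces the integrated identity
$$\int_U \brac{\pr{N-2}\eta\abs{D\vec v}^2 + \abs{y}\eta^\pri \abs{D\vec v}^2 - 2\frac{\eta^\pri}{\abs{y}}\abs{y \cdot D\vec v}^2}\d{y} = \int_U \eta\, \vec H \cdot \pr{y \cdot D\vec v}\d{y}.$$

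Next, I would approximate $\chi_{[0, r]}$ by a sequence of such cutoffs, so that $\eta^\pri$ concentrates as a negative delta at $\abs{y} = r$. For almost every $r \in \pr{0, R}$, Lebesgue differentiation applied to $\abs{D\vec v}^2 \in L^1$ together with the coarea formula ensures that the limiting surface integrals on $\del B_r$ are finite and give
$$\pr{N-2}\int_{B_r} \abs{D\vec v}^2 \d{y} = r\int_{\del B_r} \abs{D\vec v}^2 \d{S} - \frac{2}{r}\int_{\del B_r}\abs{y \cdot D\vec v}^2 \d{S} + \int_{B_r} \vec H \cdot \pr{y \cdot D\vec v}\d{y}.$$
Pairing this with the elementary derivative
$$\phi^\pri\pr{r; \vec v} = \frac{1}{r^{N-2}}\int_{\del B_r}\abs{D\vec v}^2 \d{S} - \frac{N-2}{r^{N-1}}\int_{B_r}\abs{D\vec v}^2 \d{y}$$
causes the $r^{1-N}\int_{\del B_r}\abs{D\vec v}^2 \d{S}$ contributions to cancel exactly, leaving
$$\phi^\pri\pr{r; \vec v} = \frac{2}{r^N}\int_{\del B_r}\abs{y \cdot D\vec v}^2 \d{S} - \frac{1}{r^{N-1}}\int_{B_r}\vec H \cdot \pr{y \cdot D\vec v}\d{y}.$$
Dropping the manifestly non-negative first term and restoring general $y_0$ yields the claimed lower bound.

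The main (mild) obstacle is justifying the limit $\eta \to \chi_{[0, r]}$. Because $\vec v$ is only weakly stationary, pointwise traces of $\abs{D\vec v}^2$ and $\abs{y \cdot D\vec v}^2$ on $\del B_r$ need not exist for every $r$; however, standard Fubini/coarea arguments guarantee that both surface integrals are finite and that the limiting identity above holds at almost every radius, which is exactly the qualification stated in the conclusion of the corollary.
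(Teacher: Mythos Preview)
Your proof is correct and follows essentially the same route as the paper: translate to $y_0=0$, insert the radial test field $\zeta\pr{y}=\eta\pr{\abs{y}}y$ into the stationarity identity \eqref{vectorIdiH}, pass to the limit $\eta\to\chi_{\brac{0,r}}$ to obtain the identity relating $\int_{B_r}\abs{D\vec v}^2$ to the boundary terms and the $\vec H$-integral, and then substitute into the differentiated $\phi$. The paper uses the explicit piecewise-linear cutoff $\mu_h$ in place of your generic $C^1$ cutoff, but the computation and the a.e.-in-$r$ justification are otherwise identical.
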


\begin{proof}
Assume without loss of generality that $y_0 = 0$.
Differentiating \eqref{phiHMDef} shows that for a.e. $r$
\begin{align*}
\phi^\prime\pr{r; \vec v} 
= \pr{2 - N} \frac{1}{r^{N-1}} \int_{B\pr{0, r}} \abs{D \vec v}^2 \d{y} 
+ \frac{1}{r^{N-2}} \int_{\del B\pr{0, r}} \abs{D \vec v}^2 \d{S}\pr{y}.
\end{align*}
Let $\zeta_h\pr{y} = \mu_h\pr{\abs{y}} y$, where
$$\mu_h\pr{s} = \left\{\begin{array}{ll}
1 & s \le r \\
1 + \frac{r-s}{h} & r \le s \le r + h \\
0 & r \ge r+ h
\end{array}  \right..$$
Plugging $\zeta_h$ into \eqref{vectorIdiH} with $\vec v$ and letting $h \to 0^+$, we see that for a.e. $r$
\begin{align}
&\pr{2-N}\int_{B\pr{0,r}} \abs{D \vec v}^2 \d{y} 
= \frac{2}{r } \int_{\del B\pr{0,r}} \abs{y \cdot D \vec v }^2 \d{S}\pr{y} \nonumber \\
&- r \int_{\del B\pr{r,0}} \abs{D \vec v}^2 \d{S}\pr{y}
- \int_{B\pr{0, r}} \vec H \cdot \pr{y \cdot D \vec v} \, \d{y}. 
\label{DvBall}
\end{align}
Substituting this equality into the expression for $\phi^\prime\pr{r; \vec v} $ gives
\begin{align*}
\phi^\prime\pr{r; \vec v} 
&= \frac{1}{r^{N-1}} \brac{\frac{2}{r } \int_{\del B\pr{0,r}} \abs{y \cdot D \vec v }^2 \d{S}\pr{y}
- r \int_{\del B\pr{r,0}} \abs{D \vec v}^2 \d{S}\pr{y}} \\
&+ \frac{1}{r^{N-1}} \brac{
- \int_{B\pr{0, r}} \vec H \cdot \pr{y \cdot D \vec v} \, \d{y}
+ r \int_{\del B\pr{0, r}} \abs{D \vec v}^2 \d{S}\pr{y}} \\
&= \frac{2}{r^N } \int_{\del B\pr{0,r}} \abs{y \cdot D \vec v }^2 \d{S}\pr{y}
- \frac{1}{r^{N-1}} \int_{B\pr{0, r}} \vec H \cdot \pr{y \cdot D \vec v} \, \d{y},
\end{align*}
which proves the corollary since the first term is non-negative.
\end{proof}

To understand that parabolic analogue, we introduce the evolution of harmonic maps.  
The presentation is based on Struwe's paper \cite{St88}; however, it is simplified since we only focus on targets that are spheres.

Let $\vec u : \R^d \times \R \to S^{m-1}$ be a solution to
\begin{equation}
\del_t \vec u + \LP \vec u + \abs{D \vec u}^2 \vec u = 0.
\label{HMEvo}
\end{equation}
We say that a map $\vec u : \R^d \times \R \to \R^m$ is regular iff $\vec u$ and $D\vec u$ are uniformly bounded, and $\disp \der{\vec u}{t}$, $D^2 \vec u$ belong to $L^p_{loc}$ for all $p < \iny$.  
We now have enough notation to state a version of the parabolic analogue of Theorem \ref{HMMono}.
The statement has been reformulated for our purposes.
This result was applied to regularity theory of geometric flows.

\begin{thm}[\cite{St88}, Lemma 3.2]
Let $\vec u : \R^d \times \brac{0, T} \to S^{m-1}$ be a regular solution to \eqref{HMEvo} with $\abs{D\vec u\pr{x,t}} \le c < \iny$ uniformly.  
Let $G_{t}\pr{x}$ be as given in \eqref{GDefn}.
Then the function
$$\Phi\pr{t; \vec u} = t \int_{\R^d} \abs{D\vec u\pr{x}}^2 G_{t}\pr{x} \d{x}$$
is monotonically non-decreasing in $t$.
\end{thm}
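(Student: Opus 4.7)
The plan is to follow the high-dimensional limit template used throughout the article: define $\vec v_n(y) := \vec u(F_{n,d}(y))$ on $B_T^n \subset \R^{n\cdot d}$, show that each $\vec v_n$ is a sphere-valued map satisfying a non-homogeneous harmonic map equation, apply Corollary \ref{HMMonoiH}, and pass to the limit as $n\to\infty$ using Lemma \ref{GnLimit}. Since $|\vec u|\equiv 1$, the composition $\vec v_n$ also takes values in $S^{m-1}$. Using \eqref{Laplace} together with the evolution equation $\del_t\vec u + \LP\vec u + |D\vec u|^2\vec u = 0$ gives
\[
\LP\vec v_n = -n|D\vec u|^2\vec u + \tfrac{2}{d}(x,t)\cdot\gr_{(x,t)}(\del_t\vec u),
\]
while the componentwise version of \eqref{grSq} yields $|D\vec v_n|^2 = n|D\vec u|^2 + \tfrac{2}{d}B$ with $B(x,t) := \sum_{i=1}^m \brac{(x,t)\cdot\gr_{(x,t)} u^i}\,\del_t u^i$. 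These identities let one put $\vec v_n$ into the form \eqref{HMEqiH} with forcing $\vec H_n = -\tfrac{2}{d}(x,t)\cdot\gr_{(x,t)}(\del_t\vec u) - \tfrac{2}{d}B\vec u$ and $h_n = 0$, and the regularity hypothesis on $\vec u$ upgrades this to the stationarity identity \eqref{vectorIdiH} required by Corollary \ref{HMMonoiH}.

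The second step is the analogue of the conversion formulas from Sections \ref{FF} and \ref{FBP}, complicated by the fact that the ball integral defining $\phi(r;\vec v)$ carries no $|y|^{2-N}$ weight, so Lemma \ref{PFT} does not apply directly. The fix is to slice the ball into concentric spheres: with $r = \sqrt{2ds}$,
\[
\int_{B_t^n} F(y)\,\d{y} = \int_0^t \sqrt{\tfrac{d}{2s}}\, \int_{S_s^n} F\,\sigma_{n\cdot d-1}^s\, \d{s},
\]
which is amenable to Lemma \ref{PFTS} on each sphere. Setting $\tilde\Phi_n(t) := |S^{n\cdot d-1}|^{-1}\phi(\sqrt{2dt}; \vec v_n)$, the bookkeeping yields
\[
\tilde\Phi_n(t) = d\,n\,t\int_0^1 u^{(n\cdot d-2)/2} \int_{\R^d}\Big(|D\vec u(x,ut)|^2 + \tfrac{2}{nd} B(x,ut)\Big) G_{ut,n}(x)\,\d{x}\,\d{u}.
\]
Because the kernel $n\,u^{(n\cdot d-2)/2}\,\d{u}$ concentrates at $u=1$ with total mass tending to $2/d$, $G_{ut,n}\to G_{ut}$ by Lemma \ref{GnLimit}, and the $\tfrac{2}{nd}B$ contribution is dominated via Lemma \ref{intLemma2} and the regularity of $\vec u$, one obtains $\lim_{n\to\infty}\tilde\Phi_n(t) = 2\,\Phi(t;\vec u)$.

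To extract the monotonicity, apply Corollary \ref{HMMonoiH} to $\vec v_n$ on $B_t^n$. Using \eqref{vngr} componentwise to write $y\cdot D\vec v_n = x\cdot D\vec u + 2t\,\del_t\vec u$, and noting that the $\vec u$-parallel part of $\vec H_n$ is annihilated by $\vec v_n\cdot D\vec v_n = 0$, the same radial-slicing calculation converts the elliptic derivative bound into
\[
\tilde\Phi_n'(t) \ge \frac{1}{t}\int_0^t \Big(\frac{s}{t}\Big)^{(n\cdot d-2)/2}\int_{\R^d} K(x,s)\,G_{s,n}(x)\,\d{x}\,\d{s},
\]
where $K := -\tfrac{2}{d}\brac{(x,s)\cdot\gr_{(x,s)}(\del_s\vec u)}\cdot\brac{x\cdot D\vec u + 2s\,\del_s\vec u}$. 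Crucially this bound carries \emph{no} factor of $n$, so the exponential decay of $(s/t)^{(n\cdot d-2)/2}$ for $s < t$, combined with the regularity estimates on $\vec u$, forces the right-hand side to zero by dominated convergence. Integrating the inequality in $t$ before taking the limit then produces $2\brac{\Phi(t_2;\vec u) - \Phi(t_1;\vec u)} \ge 0$ for every $t_1 < t_2$. The main obstacle is precisely this scale bookkeeping: one must verify that the leading $n|D\vec u|^2$ term in $|D\vec v_n|^2$ receives the factor of $n$ which exactly balances the concentration kernel and produces the correct weight $t$ in $\Phi(t;\vec u)$, while the error-producing terms from Corollary \ref{HMMonoiH} escape that amplification and therefore vanish in the limit.
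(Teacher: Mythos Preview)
Your argument is correct and follows the same high-dimensional-limit template as the paper, but you handle the central technical obstacle---converting the unweighted ball integral in $\phi(r;\vec v_n)$ into an $x$-integral---by a different device. The paper does not slice $B_t^n$ radially; instead it invokes the identity \eqref{DvBall} (derived inside the proof of Corollary~\ref{HMMonoiH}) to rewrite $(n\cdot d-2)\phi(\sqrt{2dt};\vec v_n)$ as a combination of two integrals over the sphere $S_t^n$ and one integral over $B_t^n$ which \emph{does} carry the weight $\abs{y}^{2-n\cdot d}$. Lemma~\ref{PFTS} then applies directly to the sphere terms, Lemma~\ref{PFT} to the remaining ball term, and the prefactor $1/(n\cdot d-2)$ automatically kills every contribution except the leading $\int_{\R^d}\abs{D\vec u}^2 G_{t,n}\,\d{x}$. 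Your radial-slicing plus the concentration of the kernel $n\,u^{(n\cdot d-2)/2}\,\d{u}$ at $u=1$ achieves the same end more elementarily, at the cost of an extra approximate-identity argument; it has the virtue of not reaching back into the proof of the corollary. A smaller cosmetic difference: the paper splits the forcing as $\vec H_n + h_n\vec v_n$ with $h_n\ne 0$, whereas you set $h_n=0$ and absorb the tangential piece into $\vec H_n$, then discard it via $\vec v_n\cdot D\vec v_n=0$---either bookkeeping works since only the component of $\vec H_n$ orthogonal to $\vec v_n$ survives in the bound of Corollary~\ref{HMMonoiH}. Finally, your remark that one should integrate in $t$ before passing to the limit is in fact more careful than the paper's formal ``$\lim_{n\to\infty}\Phi_n'(t)\ge 0$''.
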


\begin{proof}
By assumption, $\vec u : \R^{d} \times \brac{0,T} \to S^{m-1}$. 
For every $n \in \N$, let $\vec v_{n} : B_{T}^n \su \R^{n \cdot d} \to S^{m-1}$ be given by 
$$\vec v_{n}\pr{y} = \vec u\pr{F_{n,d}\pr{y}}.$$
Then by \eqref{Laplace} and \eqref{grSq} from Lemma \ref{ChainR}, we see that
\begin{align*}
&\LP \vec v_n + \abs{D\vec v_n}^2 \vec v_n  \\
&= n\pr{ \LP \vec u + \del_t \vec u + \abs{D \vec u}^2 \vec u } 
+\frac{2}{ d}\brac{\pr{x,t} \cdot \gr_{\pr{x,t}} \pr{\del_t \vec u }  +  {\pr{\pr{x,t} \cdot \gr_{\pr{x,t}} \vec u} \cdot \del_t \vec u \, \vec u}} \\
&= \frac{2}{ d}\brac{\pr{x,t} \cdot \gr_{\pr{x,t}} \pr{\del_t \vec u }  +  {\pr{\pr{x,t} \cdot \gr_{\pr{x,t}} \vec u} \cdot \del_t \vec u \, \vec u}}
=: \vec J
+ \pr{\del_t \vec u \cdot \vec K} \vec u .
\end{align*}
For every $n$, define $\vec H_{n} : B_{T}^n \to \R^{m}$ and $h_{n} : B_{T}^n \to \R$ so that 
\begin{align*}
& \vec H_n\pr{y} = \vec J \pr{F_{n,d}\pr{y}} \\
& h_n\pr{y} = \pr{\del_t \vec u \cdot \vec K} \pr{F_{n,d}\pr{y}}.
\end{align*}
Then, in $B_{T}^n \su \R^{n \cdot d}$
\begin{align}
\LP \vec v_n + \abs{D\vec v_n}^2 \vec v_n 
&= \vec H_n
+h_n \vec v_n .
\label{vnHMiHEq}
\end{align}

For each $n \in \N$, let 
$$\Phi_n\pr{t} = \frac{1}{2 \abs{S^{n \cdot d-1}}} \phi\pr{\sqrt{2dt}; \vec v_n}.$$
Using \eqref{DvBall}, we have
\begin{align*}
&\pr{n \cdot d -2}\phi\pr{\sqrt{2dt}; \vec v_n} 
= 2dt  \int_{S_t^n} \abs{D\vec v_n}^2 \pr{2dt}^{-\frac{n\cdot d -1}{2}} \si_{n\cdot d -1}^t \\
& - 2 \int_{S_t^n} \abs{ y \cdot D \vec v_n }^2 \pr{2dt}^{-\frac{n\cdot d -1}{2}} \si_{n\cdot d -1}^t
+ \pr{2dt}^{-\frac{n\cdot d - 2}{2}}\int_{B_t^n} \vec H_n \cdot \pr{y \cdot D \vec v_n} \, \d{y}.
\end{align*}
Since $\vec v_n\pr{y}\rvert_{S_t^n} = \vec u\pr{f_{n,d}\pr{y}}$, then we may apply Lemma \ref{PFTS} to the first two terms above.  
By \eqref{grSq} from Lemma \ref{ChainR},
\begin{align*}
&\int_{S_t^n} \abs{D \vec v_n}^2 \pr{2dt}^{-\frac{n\cdot d -1}{2}} \si_{n\cdot d -1}^t \\
&= \abs{S^{n \cdot d-1}}  \int_{\R^{d}} \brac{n \abs{D\vec u}^2 
+ \frac{2}{d} \pr{\pr{x,t} \cdot \gr_{\pr{x,t}}\vec u} \cdot  \frac{\del \vec u}{\del t} } G_{t,n}\pr{x} \d{x}.
\end{align*}
And using \eqref{vngr} from Lemma \ref{ChainR},
\begin{align*}
&\int_{S_t^n} \abs{y \cdot D \vec v_n}^2 \pr{2dt}^{-\frac{n\cdot d -1}{2}} \si_{n\cdot d -1}^t 
=  \abs{S^{n \cdot d-1}}  \int_{\R^{d}} \abs{x \cdot \gr \vec u + 2t \frac{\del \vec u }{\del t}}^2 G_{t,n}\pr{x} \d{x}.
\end{align*}
For the third term, we apply Lemma \ref{PFT} and use \eqref{vngr} from Lemma \ref{ChainR} to get
\begin{align}
& \frac{1}{2 \abs{S^{n \cdot d -1}} }\int_{B_t^n} \vec H_n \cdot \pr{y \cdot D \vec v_n} \abs{y}^{\pr{n \cdot d -2}} \abs{y}^{-\pr{n \cdot d -2}} \, \d{y}
\label{HnTerm} \\
&= \int_0^t \int_{\R^d} \pr{\pr{x,\tau} \cdot \gr_{\pr{x,\tau}} \del_\tau \vec u} \cdot \pr{x \cdot \gr \vec u + 2\tau \frac{\del \vec u }{\del \tau}} \pr{2 d \tau}^{\frac{n \cdot d -2}{2}} G_{\tau, n}\pr{x} \, \d{x} \, \d{\tau}.
\nonumber
\end{align}
Therefore,
\begin{align*}
&\pr{nd -2} \Phi_n\pr{t}
=n d t \int_{\R^{d}} \abs{D\vec u}^2 G_{t,n}\pr{x} \d{x} \\
&+\int_{\R^{d}} \brac{2t\pr{\pr{x,t} \cdot \gr_{\pr{x,t}}\vec u} \cdot  \frac{\del \vec u}{\del t} - \abs{x \cdot \gr \vec u + 2t \frac{\del \vec u }{\del t}}^2} G_{t,n}\pr{x} \d{x} \\
&+ \int_0^t \int_{\R^d} \pr{\pr{x,\tau} \cdot \gr_{\pr{x,\tau}} \del_\tau \vec u} \cdot \pr{x \cdot \gr \vec u + 2\tau \frac{\del \vec u }{\del \tau}} \pr{\frac \tau t}^{\frac{n \cdot d -2}{2}} G_{\tau, n}\pr{x} \, \d{x} \, \d{\tau}.
\end{align*}
Taking the limit as $n \to \iny$ and applying Lemma \ref{GnLimit}, we see that
\begin{align}
\lim_{n \to \iny} \Phi_n\pr{t}
&=  t \int_{\R^{d}} \abs{D\vec u}^2G_t\pr{x} \d{x}
=: \Phi\pr{t; u}.
\label{psinLimit}
\end{align}
Since each $\vec v_n$ satisfies \eqref{vnHMiHEq}, then we may apply Corollary \ref{HMMonoiH}.
That is,
\begin{align*}
&\Phi_n^\prime \pr{t} 
\ge \frac{\sqrt{d} }{ \sqrt {8t} \abs{S^{n \cdot d-1}}}  \pr{2 d t}^{- \frac{n \cdot d-1}{2}} \int_{B_t^n} \vec H_n \cdot \pr{y \cdot D \vec v_n}  \, \d{y}  \\
&= \frac{ 1 }{ 2 t }  \int_0^t  \int_{\R^d} \pr{\pr{x,\tau} \cdot \gr_{\pr{x,\tau}} \del_\tau \vec u} \cdot \pr{x \cdot \gr \vec u + 2\tau \frac{\del \vec u }{\del \tau}} \pr{\frac \tau t}^{\frac{n \cdot d -2}{2}}  G_{\tau, n}\pr{x}\, \d{x} \, \d{\tau},
\end{align*}
where we have used \eqref{HnTerm}.
With the use of Theorem \ref{GnLimit}, and that $\disp \lim_{n \to \iny} \pr{\frac \tau t}^{\frac{n \cdot d -2}{2}} = 0$ pointwise for every $\tau \in \pr{0, t}$, the dominated convergence theorem implies that
\begin{align*}
\lim_{n \to \iny } \Phi_n^\prime \pr{t} &\ge 0.
\end{align*}
It follows from this inequality and \eqref{psinLimit} that $\Phi\pr{t; u}$ is monotonically non-decreasing in $t$, proving the theorem.
\end{proof}

\section{Minimal Surfaces and Mean Curvature Flow}
\label{MM}

The theory of minimal surfaces is vast.
We use the following proposition from \cite{CM04} as our definition of a minimal surface.
There are a number of alternate ways to define minimal surfaces, as described in \cite{CM11}, for example.

\begin{prop}
$\Sigma^N \subset \R^k$ is a minimal surface iff the restrictions of the coordinate functions of $\R^k$ to  $\Sigma$ are harmonic functions.
\end{prop}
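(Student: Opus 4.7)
The plan is to reduce the proposition to the classical identity $\Delta_{\Sigma} X = \vec{H}$, where $X = (x^1,\ldots,x^k)$ is the position vector (equivalently, the restrictions of the Euclidean coordinate functions to $\Sigma$), $\Delta_{\Sigma}$ is the Laplace--Beltrami operator on $\Sigma$ (with its induced metric), and $\vec{H}$ is the mean curvature vector. Once this identity is in hand, the equivalence is immediate: $\Sigma$ is minimal by definition means $\vec{H} \equiv 0$, which by the identity is exactly the condition that each $x^j\rvert_\Sigma$ is harmonic on $\Sigma$.

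To establish $\Delta_\Sigma x^j = H^j$, I would fix a point $p \in \Sigma$, choose a local orthonormal frame $\{e_1,\ldots,e_N\}$ for $T\Sigma$ near $p$, and use the Gauss formula decomposing the ambient connection as
\begin{equation*}
\nabla^{\R^k}_{e_i} e_i = \nabla^{\Sigma}_{e_i} e_i + \mathrm{II}(e_i,e_i),
\end{equation*}
where $\mathrm{II}$ is the second fundamental form and $\nabla^{\Sigma}$ the induced Levi--Civita connection. Then for any coordinate function $x^j$,
\begin{equation*}
\Delta_{\Sigma} x^j \;=\; \sum_{i=1}^{N} \bigl[e_i(e_i(x^j)) - (\nabla^{\Sigma}_{e_i} e_i)(x^j)\bigr]
\;=\; \sum_{i=1}^{N} \langle \nabla^{\R^k}_{e_i} e_i - \nabla^{\Sigma}_{e_i} e_i,\, \partial_{x^j}\rangle
\;=\; \Bigl\langle \sum_{i=1}^{N} \mathrm{II}(e_i,e_i),\, \partial_{x^j}\Bigr\rangle
\;=\; H^j,
\end{equation*}
using that $e_i(x^j) = \langle e_i,\partial_{x^j}\rangle$ and that the Euclidean connection is flat so $\nabla^{\R^k}_{e_i}\partial_{x^j} = 0$.

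With the identity in place, the proof finishes in one line: $\Sigma$ is minimal iff $\vec H \equiv 0$ iff $H^j \equiv 0$ for all $j=1,\ldots,k$ iff $\Delta_\Sigma x^j \equiv 0$ for all $j$, i.e., each coordinate function restricted to $\Sigma$ is harmonic. The only real content is the identity $\Delta_\Sigma X = \vec H$; everything else is bookkeeping. The main obstacle, such as it is, is being precise about which connection and which Laplacian are being used at each step, since the chain rule/flatness argument above implicitly relies on extending the tangential frame $\{e_i\}$ to a neighborhood in $\R^k$ in order to apply $\nabla^{\R^k}$, and one should verify that the trace $\sum_i \mathrm{II}(e_i,e_i)$ is independent of the choice of frame (which it is, since the mean curvature vector is an intrinsically defined trace of the normal-valued symmetric bilinear form $\mathrm{II}$).
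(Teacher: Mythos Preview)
Your argument is correct and is the standard proof of this classical characterization, via the identity $\Delta_\Sigma X = \vec{H}$. Note, however, that the paper does not actually prove this proposition: it is quoted from \cite{CM04} and explicitly adopted \emph{as the definition} of a minimal surface for the purposes of the article, so there is no proof in the paper to compare against. Your derivation supplies exactly the justification that the paper outsources to the reference; the only minor point worth tightening is that in the line
\[
e_i(e_i(x^j)) = \langle \nabla^{\R^k}_{e_i} e_i, \partial_{x^j}\rangle
\]
you are implicitly using $e_i(\langle e_i,\partial_{x^j}\rangle) = \langle \nabla^{\R^k}_{e_i} e_i,\partial_{x^j}\rangle + \langle e_i,\nabla^{\R^k}_{e_i}\partial_{x^j}\rangle$ together with $\nabla^{\R^k}_{e_i}\partial_{x^j}=0$, which you mention but could make explicit at that step.
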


The next theorem is a monotonicity result for minimal surfaces.
The statement appears in \cite{CM04}, Proposition 4.1 and Lemma 4.2.
This formula is useful in the regularity theory of minimal surfaces.

\begin{thm}
Suppose that $\Sigma^N \subset \R^k$ is a minimal surface and let $w_0 \in \R^k$.  Then the function
\begin{align*}
&\Theta_{w_0}\pr{r; \Sigma} = \frac{\Vol\pr{B_r\pr{w_0} \cap \Sigma}}{\Vol\pr{B_r \subset \R^N}}
\end{align*}
is monotonically non-decreasing in $r$. 
Furthermore,
$$\frac{d}{dr} \Theta_{w_0}\pr{r; \Si} = \frac{N}{\abs{S^{N-1}} r^{N+1}} \int_{\del B_r \cap \Sigma} \frac{\abs{\pr{w-w_0}^\perp}^2}{\abs{\pr{w-w_0}^T}}.$$
\label{MSThm}
\end{thm}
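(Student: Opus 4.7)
The plan is to derive the stated identity for $\frac{d}{dr}\Theta_{w_0}(r;\Si)$ directly from two ingredients: the preceding proposition, which asserts that the coordinate functions of $\R^k$ restrict to harmonic functions on $\Si$, and the standard intrinsic calculus (divergence theorem, coarea formula) available on any smooth submanifold. Monotonicity then follows immediately, since the integrand on the right-hand side is manifestly non-negative. In contrast with the other theorems of the paper, no high-dimensional limit is needed here, because this theorem is itself the \emph{elliptic} starting point that the parabolic (mean curvature flow) analogue will ultimately be reduced to.

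Assume without loss of generality that $w_0 = 0$, and set $\rho(w) = |w|$ and $V(r) = \Vol(B_r \cap \Si)$. Since each $\LP_\Si w^i = 0$ and the standard identity $\sum_i |\nabla_\Si w^i|^2 = \tr P_{T\Si} = \dim\Si = N$ holds, I first obtain
$$\LP_\Si |w|^2 = 2\sum_{i=1}^k |\nabla_\Si w^i|^2 = 2N.$$
Next I would integrate this over $B_r \cap \Si$ and apply the intrinsic divergence theorem. A short calculation identifies the outward conormal along $\del B_r \cap \Si$ as $\nu = w^T/|w^T|$, where $w^T$ denotes the tangential part of the position vector, so $\del_\nu |w|^2 = 2|w^T|$ and
$$N\, V(r) = \int_{\del B_r \cap \Si} |w^T|. \qquad (\ast)$$
Separately, the coarea formula on $\Si$ applied to $\rho$, combined with $|\nabla_\Si \rho| = |w^T|/\rho$, yields the companion identity
$$V'(r) = \int_{\del B_r \cap \Si} \frac{r}{|w^T|}. \qquad (\ast\ast)$$

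Writing $\omega_N = |S^{N-1}|/N$ for the volume of the Euclidean unit $N$-ball, I would then differentiate $\Theta_{w_0}(r;\Si) = V(r)/(\omega_N r^N)$ and combine $(\ast)$ with $(\ast\ast)$:
$$\Theta'(r) = \frac{rV'(r) - N V(r)}{\omega_N r^{N+1}} = \frac{1}{\omega_N r^{N+1}} \int_{\del B_r \cap \Si} \frac{r^2 - |w^T|^2}{|w^T|}.$$
On $\del B_r$ the constraint $|w|^2 = r^2$ gives $r^2 - |w^T|^2 = |w^\perp|^2$, and $1/\omega_N = N/|S^{N-1}|$, so this matches the claimed formula; non-negativity of the integrand yields monotonicity.

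The main technical point is not a deep analytic difficulty but rather justifying the boundary manipulations when $\del B_r \cap \Si$ fails to be a smooth submanifold of $\Si$. In practice I would bypass this by running the classical first-variation argument instead: since $\Si$ is minimal, $\int_\Si \di_\Si X = 0$ for every compactly supported vector field $X$, and choosing $X(w) = w\,\eta(|w|)$ for a smooth cutoff $\eta$ approximating $\chi_{[0,r]}$ produces both $(\ast)$ and $(\ast\ast)$ in the limit without ever invoking regularity of $\del B_r \cap \Si$, while Sard's theorem takes care of the remaining exceptional values of $r$.
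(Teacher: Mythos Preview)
Your proposal is correct. The paper does not give a standalone proof of this theorem (it is cited from \cite{CM04}), but your argument coincides with the paper's proof of the non-homogeneous Corollary~\ref{MSCor} specialized to $h=0$: both compute $\LP_\Si |w|^2 = 2N$, integrate via the divergence theorem to obtain $N\,V(r) = \int_{\del B_r \cap \Si} |w^T|$, differentiate via the coarea formula, and combine. The only substantive difference is that the paper carries out these steps explicitly in graph coordinates $w = (y, v(y))$, whereas you work intrinsically on $\Si$; your version is cleaner and also addresses the regularity of $\del B_r \cap \Si$ (via the cutoff/first-variation route and Sard's theorem), a point the paper's corollary proof leaves implicit.
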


To prove the parabolic analogue, we use the following non-homogeneous version of this result.

\begin{cor}
Let $\Sigma^N \subset \R^{k}$ be a surface with mean curvature denoted by $H$ and outward normal $\bf \nu$.
Assume that $H = h$, for some bounded integrable function $h$.
For the function
\begin{align*}
&\tilde \Theta_{w_0}\pr{r; \Sigma} := \frac{\Vol\pr{B_r\pr{w_0} \cap \Sigma} - \frac 1 N \int_{B_r\pr{w_0} \cap \Si} h \, w \cdot {\bf \nu}}{\Vol\pr{B_r \subset \R^N}}
\end{align*}
we have that
\begin{align*}
&\frac{d}{dr} \tilde \Theta_{w_0}\pr{r; \Si} \\
&= \frac{N}{\abs{S^{N-1}} r^{N+1}}  \int_{\del B_r\pr{w_0} \cap \Sigma} \pr{\frac{\abs{\pr{w-w_0}^\perp}^2}{\abs{\pr{w-w_0}^T}}
- \frac {1}{N} h w \cdot {\bf \nu} \frac{\abs{w - w_0}^2}{\abs{\pr{w - w_0}^T}} }.
\end{align*}
\label{MSCor}
\end{cor}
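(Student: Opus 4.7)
The plan is to follow the classical computation that yields Theorem~\ref{MSThm}, keeping careful track of the extra contributions that arise when $H \ne 0$. By a harmless translation I first reduce to $w_0 = 0$, so that $w - w_0 = w$ throughout.

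The proof then rests on two standard tools on $\Sigma$. The first is the tangential divergence identity
$$\di_\Sigma(w) = N + H (w \cdot \nu),$$
which follows from $\LP_\Sigma(|w|^2/2) = w \cdot \vec H + |\gr_\Sigma w|^2 = w \cdot \vec H + N$ using the convention $\vec H = H \nu$. Applying the divergence theorem to $B_r \cap \Sigma$, and observing that the outward conormal to $\partial B_r \cap \Sigma$ inside $\Sigma$ is $w^T/|w^T|$ (so that $w \cdot \eta = |w^T|$), the hypothesis $H = h$ produces the key identity
$$N \, \Vol(B_r \cap \Sigma) + \int_{B_r \cap \Sigma} h (w \cdot \nu) = \int_{\partial B_r \cap \Sigma} |w^T|.$$
In other words, if $\tilde f(r)$ denotes the numerator of $\tilde \Theta_0(r; \Sigma)$, then $N \tilde f(r) = \int_{\partial B_r \cap \Sigma} |w^T|$.

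The second tool is the coarea formula on $\Sigma$ applied to the function $|w|$, whose tangential gradient has norm $|w^T|/|w|$. It gives
$$\tilde f'(r) = r \int_{\partial B_r \cap \Sigma} \frac{1 + h (w \cdot \nu)/N}{|w^T|}.$$
Substituting the previous two identities into $\frac{d}{dr}\bigl(\tilde f(r)/r^N\bigr) = \tilde f'(r)/r^N - N \tilde f(r)/r^{N+1}$ and using $r^2 = |w|^2 = |w^T|^2 + |w^\perp|^2$ on $\partial B_r$, the $|w^T|$ and $1/|w^T|$ pieces combine into
$$\frac{1}{r^{N+1}} \int_{\partial B_r \cap \Sigma} \left( \frac{|w^\perp|^2}{|w^T|} + \frac{h (w \cdot \nu)}{N} \cdot \frac{|w|^2}{|w^T|} \right).$$
Dividing by $\Vol(B_r \subset \R^N) = |S^{N-1}| r^N / N$ then yields the claimed formula.

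The main subtlety I expect is pinning down the sign and normalization conventions for $\vec H$, $\nu$, and $h$ so that the correction $\tfrac{1}{N}\int h \, w \cdot \nu$ in the definition of $\tilde \Theta$ exactly absorbs the non-minimality term in the divergence identity; a model check on a round sphere suffices to fix the signs. A secondary technical point is to verify that $\partial \Sigma$ contributes nothing to the divergence theorem on $B_r \cap \Sigma$, which is tacitly assumed already in the minimal-surface version.
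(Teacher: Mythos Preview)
Your proof is correct and follows essentially the same route as the paper: the identity $\LP_\Sigma |w|^2 = 2N + 2h\, w\cdot\nu$ (equivalently your tangential divergence identity), the divergence theorem to obtain $N\,\Vol(B_r\cap\Sigma) + \int_{B_r\cap\Sigma} h\, w\cdot\nu = \int_{\partial B_r\cap\Sigma} |w^T|$, the coarea formula, and then the differentiation of $r^{-N}\tilde f(r)$. The only difference is presentational: the paper restricts to the case where $\Sigma$ is a graph $w = (y, v(y))$ and carries out all of these steps in explicit coordinates, whereas your argument is intrinsic and hence slightly cleaner and more general.
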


\begin{proof}
We give the proof in the case where $\Si$ is given by the graph of $v : \R^N \to \R$ so that the coordinates are $w = \pr{y_1, \ldots, y_N, v\pr{y}}$.
Without loss of generality, we may assume that $v\pr{0} = 0$ and take $w_0 = 0$.
We write $B_r$ in place of $B_r\pr{0}$ throughout this proof.
The unit outward normal to $\Si$ is given by
\begin{align}
& {\bf \nu} = \frac{\pr{\gr v, -1}}{\sqrt{1 + \abs{\gr v}^2}},
\label{norVec}
\end{align}
and the first and second fundamental forms are
\begin{align}
 g_{ij} = \de_{ij} + \der{v}{y_i} \der{v}{y_j}, \;\;\;
& g^{ij} = \de_{ij} - \frac{1}{1 + \abs{\gr v}^2} \der{v}{y_i} \der{v}{y_j}, \;\;\;
 \abs{g} = 1 + \abs{\gr v}^2, \nonumber \\
&h_{ij} = \frac{ 1}{\sqrt{1 + \abs{\gr v}^2}}\derm{v}{y_i}{y_j}.
\label{forms}
\end{align}
Therefore, the mean curvature is given by
\begin{equation}
H = g^{ij} h_{ij} = \frac{\LP v}{\sqrt{1 + \abs{\gr v}^2}} - \frac{1}{\pr{1 + \abs{\gr v}^2}^{3/2}} \sum_{i,j =1}^N \der{v}{y_i} \der{v}{y_j} \derm{v}{y_i}{y_j}.
\label{meanCurv}
\end{equation}
The Laplace-Beltrami operator on $\Si$ is 
\begin{align*}
\LP_{\Si} 
&= \frac 1 {\sqrt{\abs{g}}} \sum_{i,j = 1}^N \frac{\del}{\del y_i}\pr{g^{ij} \sqrt{\abs{g}} \der{}{y_j}} \\
&= -\frac{H}{\sqrt{1 + \abs{\gr v}^2}} \sum_{j = 1}^N \der{v}{y_j} \der{}{y_j} 
+\LP
- \frac 1 {{1 + \abs{\gr v}^2}} \sum_{i,j = 1}^N \der{v}{y_i} \der{v}{y_j} \frac{\del^2}{\del y_i\del y_j}.
\end{align*}
Since $H = h$, a computation shows that
\begin{align*}
 \LP_\Si \abs{w}^2
&=2 N - 2 h \, w \cdot {\bf \nu}
\end{align*}
and therefore
\begin{align*}
 N \Vol\pr{B_r \cap \Si} - \int_{B_r \cap \Si} h \, w \cdot {\bf \nu}
&= \frac 1 2 \int_{B_r \cap \Si}  \LP_\Si \abs{w}^2 \\
&= \int_{ B_{v,r}^{N} } \sum_{i,j = 1}^N \frac{\del}{\del y_i}\brac{g^{ij} \sqrt{\abs{g}} \pr{y_j + v \der{v}{y_j}}} \d{y} ,
\end{align*}
where $B_{v,r}^{N} = \set{ y \in \R^N : \abs{y}^2 + \abs{v\pr{y}}^2 \le r^2}$.
Set $S_{v,r}^{N-1} = \set{ y \in \R^N : \abs{y}^2 + \abs{v\pr{y}}^2 = r^2}$, the boundary of $B_{v,r}^{N}$ and let ${\bf{n}} = \frac{y + v \gr v}{\sqrt{\abs{y}^2 + 2 v y \cdot \gr v + v^2 \abs{\gr v}^2}}$, the normal vector to $S_{v,r}^{N-1}$.
An application of the divergence theorem gives
\begin{align*}
& N \Vol\pr{B_r \cap \Si} - \int_{B_r \cap \Si} h \, w \cdot {\bf \nu} \\
&= \int_{ S_{v,r}^{N-1} } \sum_{i,j = 1}^N \pr{ \de_{ij} - \frac{1}{1 + \abs{\gr v}^2} \der{v}{y_i} \der{v}{y_j} } \pr{y_j + v \der{v}{y_j}} \pr{y_i + v \der{v}{y_i}} \times \\
&\times \sqrt{\frac{ 1 + \abs{\gr v}^2 }{\abs{y}^2 + 2 v y \cdot \gr v + v^2 \abs{\gr v}^2}} \d{S}\pr{y} \\
&= \int_{ S_{v,r}^{N-1} } \frac{\abs{y}^2 + 2 v y \cdot \gr v + v^2 \abs{\gr v}^2 + \abs{y}^2 \abs{\gr v}^2 - \pr{y \cdot \gr v}^2}{\sqrt{\pr{1 + \abs{\gr v}^2}\pr{\abs{y}^2 + 2 v y \cdot \gr v + v^2 \abs{\gr v}^2}}}  \d{S}\pr{y} \\
&= \int_{\del B_r \cap \Si} \abs{w^T}.
\end{align*}
By the coarea formula,
\begin{align*}
&\Vol\pr{B_r \cap \Si} - \frac{1}{N} \int_{B_r \cap \Si} h \, w \cdot {\bf \nu} 
=  \int_{B_{v,r}^{N} }\pr{1 - \frac{1}{N} h \, w \cdot {\bf \nu}} \sqrt{1 + \abs{\gr v}^2} \d{y} \\
&=  \int_{-\iny}^{r} \int_{S_{v,\rho}^{N-1} }\pr{1 - \frac{1}{N} h \, w \cdot {\bf \nu}} \sqrt{\frac{\abs{y}^2 + v^2}{\abs{y}^2 + 2 v y \cdot \gr v + v^2 \abs{\gr v}^2 }} \sqrt{1 + \abs{\gr v}^2} \d{S}\pr{y} \d{\rho} ,
\end{align*}
so that
\begin{align*}
&\frac{\d{}}{\d{r}} \brac{r^{-N}\pr{\Vol\pr{B_r \cap \Si} - \frac 1 N \int_{B_r \cap \Si} h \, w \cdot {\bf \nu}} } \\
&= - r^{-N-1 }\brac{ N \Vol\pr{B_r \cap \Si} - \int_{B_r \cap \Si} h \, w \cdot {\bf \nu}}  \\
&+ r^{-N} \frac{\d{}}{\d{r}}\pr{\Vol\pr{B_r \cap \Si} - \frac{1}{N} \int_{B_r \cap \Si} h \, w \cdot {\bf \nu} } \\
&=  r^{-N-1} \int_{S_{v,r}^{N-1}}  \frac{  v^2  - 2 v y \cdot \gr v  + \pr{y \cdot \gr v}^2 }{\sqrt{\pr{1 + \abs{\gr v}^2}\pr{\abs{y}^2 + 2 v y \cdot \gr v + v^2 \abs{\gr v}^2 }}} \d{S}\pr{y} \\
&- \frac 1 N r^{-N+1} \int_{S_{v,r}^{N-1} } \frac{ h\pr{y \cdot \gr v - v} }{ \sqrt{\abs{y}^2 + 2 v y \cdot \gr v + v^2 \abs{\gr v}^2} } \d{S}\pr{y} \\
&=  r^{-N-1} \int_{\del B_r \cap \Si} \pr{ \frac{ \abs{w^\perp}^2 }{ \abs{w^T}} -  \frac 1 N h w \cdot {\bf \nu} \frac{\abs{w}^2}{\abs{w^T}}} ,
\end{align*}
proving the theorem.
\end{proof}

The parabolic analogue of the minimal surface equation is mean curvature flow.  Let $\set{M_t} \subset \R^{d+1}$ be a $1$-parameter family of smooth hypersurfaces.  Then $\set{M_t}$ flows by mean curvature if
\begin{equation}
z_t = \mathbf{H}\pr{z} = \LP_{M_t} z,
\label{MCF}
\end{equation}
where $z$ are the coordinates on $\R^{d+1}$ and $\mathbf{H} = - H \mathbf{\nu}$ denotes the mean curvature vector.  The following theorem is the monotonicity formula due to Huisken \cite{Hu90}.  

\begin{thm}[\cite{Hu90}, Theorem 3.1]
If a smooth $1$-parameter family of hypersurfaces $M_t$ satisfies \eqref{MCF} for $t < T$, then the density ratio 
$$ \vartheta\pr{t; M_t} = \int_{M_t} \pr{T -t}^{-d/2} \exp\pr{-\frac{\abs{z}^2}{4\pr{T-t}}}$$ 
is monotonically non-increasing in $t$.
Furthermore,
\begin{align*}
&\frac{\d{}}{\d{t}}\vartheta\pr{t; M_t} = -\int_{M_t} \abs{\mathbf{H} + \frac{z^\perp}{2\pr{T-t}}}^2 \pr{T-t}^{-d/2} \exp\pr{-\frac{\abs{z}^2}{4\pr{T-t}}}.
\end{align*}
\end{thm}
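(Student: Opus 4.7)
The plan follows the template of Sections \ref{CE}--\ref{HM}: lift to a high-dimensional elliptic object via $F_{n,d}$, apply the non-homogeneous elliptic monotonicity (here Corollary \ref{MSCor}), and pass to the limit using Lemmas \ref{PFT} and \ref{GnLimit}. Since Corollary \ref{MSCor} is phrased for graphs, I first localize by writing $M_t$ as a graph $z_{d+1} = u(x,t)$ over $\R^d$; the flow $z_t + \LP_{M_t} z = 0$ then becomes the quasilinear graphical equation
\begin{align*}
u_t = \LP u - \frac{u_i u_j u_{ij}}{1 + |\gr u|^2}.
\end{align*}
Globalization from graph charts to a general $M_t$ would be carried out at the end by centering the chart at each point of $M_t$, since Huisken's density is intrinsic.

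For each $n \in \N$, set $v_n(y) = u(F_{n,d}(y))$ on $B_T^n \su \R^{n \cdot d}$ and let $\Si_n \su \R^{nd + 1}$ be the graph of $v_n$. Using \eqref{meanCurv} together with the chain rule identities \eqref{firstDer}--\eqref{grSq}, I would compute the mean curvature $H_n$ of $\Si_n$ and, after substituting the graphical MCF equation for $u_t$, record it as $H_n = h_n$ for an explicit function $h_n(x,t)$. Applying Corollary \ref{MSCor} to $\Si_n$ at $w_0 = 0$ with radius $r = \sqrt{2 n d t}$ then yields monotonicity of
$$\tilde \Theta_n(t) := \tilde \Theta_0(\sqrt{2 n d t}; \Si_n) = \frac{\Vol(B_r(0) \cap \Si_n) + \frac{1}{nd} \int_{B_r(0) \cap \Si_n} h_n \, w \cdot {\bf \nu}}{\Vol(B_r \su \R^{nd})},$$
together with an explicit lower bound on its derivative.

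The numerator is evaluated by writing $\Vol(B_r \cap \Si_n) = \int_{B_{v_n, r}^{nd}} \sqrt{1 + |\gr v_n|^2} \, \d{y}$ and applying Lemma \ref{PFT} (or its sphere analogue from Lemma \ref{PFTS}, after a coarea decomposition in $r$) to transform it into a space-time integral weighted by $G_n(x, t)$, while the denominator $\Vol(B_r \su \R^{nd})$ is absorbed by the Stirling constant appearing in Lemma \ref{GnLimit}. Passing to the limit via Lemmas \ref{intLemma2} and \ref{GnLimit} converts $G_n \to G_t$ and, together with the induced surface element on $M_t$, gives $\tilde\Theta_n(t) \to \vartheta(t; M_t)$. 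The same procedure applied to the derivative formula should identify the non-negative $|y \cdot D\vec v|^2/r^{N+1}$ term in Corollary \ref{MSCor} with Huisken's $|\mathbf{H} + z^\perp/(2t)|^2$ integrand, with the correction involving $h_n$ supplying the cross-term responsible for the $z^\perp/(2t)$ piece.

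The main obstacle is the mean curvature computation. After substituting graphical MCF, both $\LP v_n$ and the Hessian contraction $\sum (v_n)_i (v_n)_j (v_n)_{ij}$ in \eqref{meanCurv} contain leading-in-$n$ contributions (of orders $n$ and $n^2$ respectively), and the compensating $\sqrt{1+|\gr v_n|^2} \sim \sqrt{n} |\gr u|$ factor from the graph area form must be tracked carefully so that the product $h_n \sqrt{1+|\gr v_n|^2}$ integrated against $G_n$ produces the finite Huisken quantity in the limit. Verifying this rebalancing, and checking that the limiting derivative matches $\int_{M_t} |\mathbf{H} + z^\perp/(2t)|^2 t^{-d/2} e^{-|z|^2/(4t)}$ exactly, constitutes the bulk of the technical work.
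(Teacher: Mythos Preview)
Your outline follows the right template, but two choices prevent it from working as written.

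First, the radius: in every previous section the elliptic result is applied on the ball $B_t^n\subset\R^{nd}$ of radius $\sqrt{2dt}$, and then Lemmas \ref{PFTS}/\ref{PFT} convert integrals over $S_t^n$ or $B_t^n$ into $G_n$-weighted integrals on $\R^d$. Your choice $r=\sqrt{2ndt}$ is the radius of $B_{nt}\subset\R^d$, not of any natural object in $\R^{nd+1}$; in particular for large $n$ it lies outside the domain $B_T^n$ of $v_n$. The correct scale is $r=\sqrt{2dt}$.

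Second, and more importantly, taking $\Si_n$ to be the ordinary graph $\set{(y,v_n(y))}$ with $v_n=u\circ F_{n,d}$ will not produce Huisken's integrand in the limit. The set $\del B_r\cap\Si_n$ is $\set{\abs{y}^2+v_n(y)^2=r^2}$, which is \emph{not} a coordinate sphere $\set{\abs{y}=\text{const}}$, so Lemmas \ref{PFTS} and \ref{PFT} do not apply to it. Even formally, the standard pushforward would hand you $G_t(x)\,\d x\propto e^{-\abs{x}^2/(4t)}\,\d x$, whereas Huisken's density in graph coordinates is $t^{-d/2}e^{-(\abs{x}^2+u^2)/(4t)}\sqrt{1+\abs{\gr u}^2}\,\d x$; the $u^2$ in the exponent is invisible to the unmodified $F_{n,d}$. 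The paper resolves this by using the \emph{scaled} graph $w=(y,\,n^{-1/2}v_n(y))$ and redefining the time parameter as $t=\frac{\abs{y}^2}{2d}+\frac{v_n(y)^2}{2nd}$, so that $\abs{w}^2=2dt$ holds \emph{exactly} on $\del B_{\sqrt{2dt}}\cap\Si_n$. This forces $v_n$ to be defined implicitly through a modified map $F_{n,d}^{v_n}$, means Lemma \ref{ChainR} no longer applies (the chain rule identities must be redone with extra $(1-\tfrac{u}{nd}\,\del_t u)$ factors), and requires a new pushforward formula with a weight $G_{t,n}^u$ whose limit is $C_d^{-1}t^{-d/2}e^{-(\abs{x}^2+u^2)/(4t)}$. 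Only after these modifications does the volume ratio $\tilde\Theta_0(\sqrt{2dt};\Si_n)$ converge to $\vartheta(t;M_t)$ and the derivative term $\abs{w^\perp}^2/\abs{w^T}$ converge to $\abs{\mathbf H+z^\perp/(2t)}^2$; the $h_n$ correction then carries a factor $n^{-1}$ and drops out. Your proposal misses this geometric rescaling, which is precisely the new idea needed beyond Sections \ref{CE}--\ref{HM}.
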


\begin{proof}
Let $M_t$ be a smooth $1$-parameter family of $d$-dimensional hypersurfaces that flows by mean curvature. 
Assume that each $M_t$ is given by a graph.
Then there exists a function $u : \R^{d} \times \brac{0,T} \to \R$ such that $M_t$ is given in $\R^{d+1}$ by the coordinates
$$\pr{x_1\pr{t}, \ldots, x_d\pr{t}, u\pr{x_1\pr{t}, \ldots, x_d\pr{t}, t}}.$$
Thus, the coordinates of $M_t$ are $z = \pr{x, u\pr{x,t}}$.
The unit outward normal is given by a formula analogous to \eqref{norVec}, while the first and fundamental forms resemble those given in \eqref{forms}.
Therefore, the mean curvature is given by \eqref{meanCurv} with each $v$ replaced by a $u$.
By the assumption that $M_t$ flows by mean curvature, we see that
\begin{align*}
& \pr{{x_1}^\prime\pr{t}, \ldots, {x_d}^\prime\pr{t}, \gr u\pr{x_1\pr{t}, \ldots, x_d\pr{t}, t} \cdot \pr{{x_1}^\prime\pr{t}, \ldots, {x_d}^\prime\pr{t}} +\del_t u} \\
&= -\pr{\frac{\LP u}{\sqrt{1 + \abs{\gr u}^2}}  - \frac{1}{\pr{1 + \abs{\gr u}^2}^{3/2}} \sum_{i,j =1}^d\der{u}{x_i} \der{u}{x_j} \derm{u}{x_i}{x_j} } \frac{\pr{\gr u, -1}}{\sqrt{1 + \abs{\gr u}^2}}.
\end{align*}
Looking at the first $d$ components, we have
\begin{align*}
\pr{{x_1}^\prime\pr{t}, \ldots, {x_d}^\prime\pr{t}} 
&= -\pr{\frac{\LP u}{{1 + \abs{\gr u}^2}} - \frac{1}{\pr{1 + \abs{\gr u}^2}^{2}} \sum_{i,j =1}^d\der{u}{x_i} \der{u}{x_j} \derm{u}{x_i}{x_j}  } \gr u.
\end{align*}
And therefore, from the last component,
\begin{align}
&\der{ u}{t} 
= \LP u - \frac{1}{{1 + \abs{\gr u}^2}} \sum_{i,j =1}^d\der{u}{x_i} \der{u}{x_j} \derm{u}{x_i}{x_j}
= H \sqrt{1 + \abs{\gr u}^2} .
\label{MCFEq}
\end{align}

For each $n \in \N$, let $\Si_n$ be an $n \cdot d$-dimensional hypersurface with coordinates in $\R^{n\cdot d + 1}$ 
$$\pr{y_{1,1}, \ldots, y_{d,n}, \frac{1}{\sqrt{n}}v_n\pr{y_{1,1}, \ldots, y_{d,n}}},$$
where 
\begin{align}
&T- t = \frac{\abs{y}^2}{2d} + \frac{\abs{v_n\pr{y}}^2}{2nd},
\label{tnDefn} \\
& y_{i,1} +  \ldots + y_{i,n} = x_i\pr{t}
\label{xiDefn}
\end{align}
for each $i = 1, \ldots, d$, and each $v_n : B_{v_n/\sqrt{n}, \sqrt{2 d \pr{T-t}}}^{n\cdot d} \su \R^{n\cdot d} \to \R$ is given by
\begin{align*}
v_n\pr{y} &= u\pr{F_{n,d}^{v_n}\pr{y}} \\
&:= u\pr{y_{1,1} +  \ldots + y_{1,n}, \ldots, y_{d,1} +  \ldots + y_{d, n}, T - \frac{\abs{y}^2}{2d} - \frac{\abs{v_n\pr{y}}^2}{2nd}}.
\end{align*}
By analogy with the computations above, the unit outward normal on each $\Si_n$ is
$\disp {\bf \nu}_n = \frac{\pr{\gr v_n, -\sqrt{n}}}{\sqrt{n + \abs{\gr v_n}^2}},$
and the first and second fundamental forms are
\begin{align*}
  g_{n, ij \, kl} = \de_{ik}\de_{jl} + \frac{1}{n}\der{v_n}{y_{i,j}}\der{v_n}{y_{k,l}} \;\;\;\;\;\
& g_n^{ij \, kl} = \de_{ik}\de_{jl} - \frac{1}{n + \abs{\gr v_n}^2}\der{v_n}{y_{i,j}}\der{v_n}{y_{k,l}} \\
\abs{ g_n } = 1 + \frac{1}{n} \abs{\gr v_n}^2 \;\;\;\;\;\
& h_{n, ij \, kl} = \frac{1 }{\sqrt{n + \abs{\gr v_n}^2}}\derm{v_n}{y_{i,j}}{y_{k,l}}.
\end{align*}
Thus,
\begin{align*}
H_n 
&= g_n^{ij \, kl} h_{n, {ij \, kl}} \\
&= \frac{\LP v_n}{\sqrt{n + \abs{\gr v_n}^2}} 
- \frac{1}{\pr{n + \abs{\gr v_n}^2}^{3/2}}\sum_{i,k =1}^d\sum_{j,l=1}^n \der{v_n}{y_{i,j}}\der{v_n}{y_{k,l}} \derm{v_n}{y_{i,j}}{y_{k,l}}.
\end{align*}

As we have changed the definition of $t$, Lemma \ref{ChainR} is not applicable in this setting, so we collect some computations here.
Since
\begin{align*}
\der{v_n}{y_{i,j}}
&= \der{u}{x_i} + \der{u}{t}\pr{-\frac{y_{i,j}}{d} - \frac {v_n}{nd}{\der{v_n}{y_{i,j}}}} 
\end{align*}
then
\begin{align*}
\der{v_n}{y_{i,j}}\pr{1 + \frac{u}{nd}\der{ u}{t} } 
&= { \der{u}{x_i} - \der{u}{t}\frac{y_{i,j}}{d} }.
\end{align*}
Differentiating the first expression with respect to $y_{k, \ell}$ and simplifying gives
\begin{align*}
\derm{v_n}{y_{i,j}}{y_{k,\ell}}\pr{1 + \frac{u}{nd}\der{ u}{t} } 
&= \derm{u}{x_i}{x_k} 
- \derm{u}{x_i}{t} \frac{y_{k,\ell}}{d} 
- \derm{u}{x_k}{t} \frac{y_{i,j}}{d} 
- \der{u}{t}\frac{\de_{i,k} \de_{k,\ell}}{d} 
+ \dert{u}{t} \frac{y_{i,j} y_{k,\ell}}{d^2}\\
&+ \pr{\frac{v_n y_{k,\ell}}{nd^2} \dert{u}{t} - \frac {v_n}{nd} \derm{u}{x_k}{t} }\der{v_n}{y_{i,j}} 
+ \pr{\frac {v_n y_{i,j}}{nd^2} \dert{u}{t} - \frac {v_n}{nd} \derm{u}{x_i}{t}} \der{v_n}{y_{k,\ell}}  \\
&+ \pr{\frac {\abs{v_n}^2}{n^2d^2}\dert{u}{t} -\frac {1}{nd} \der{u}{t}}\der{v_n}{y_{i,j}}\der{v_n}{y_{k,\ell}} .
\end{align*}
Now we multiply through by $\pr{1 + \frac{u}{nd}\der{ u}{t} } ^2$ and use the expression for the first order derivatives from above to simplify the righthandside,
%
\begin{align*}
\derm{v_n}{y_{i,j}}{y_{k,l}} & \pr{1 + \frac{u}{nd}\der{u}{t} }^3 
= \derm{u}{x_i}{x_k}
- \der{u}{t} \frac{\de_{ik} \de_{jl}}{d} 
- \derm{u}{x_k}{t} \frac{y_{i,j}}{d} 
- \derm{u}{x_i}{t} \frac{y_{k,l}}{d}  
+ \dert{u}{t}\frac{y_{i,j} y_{k,l}}{d^2} \\
&+ \frac{1}{nd} A_{ik}
+ \frac{1}{nd^2} B_k y_{i,j}
+ \frac{1}{nd^2} C_i y_{k,l} 
+\frac{1}{nd^3} D y_{i,j} y_{k,l} 
+ \frac{1}{nd^2} E \de_{ik} \de_{jl},
\end{align*}
where each of the terms introduced above depends on $u$ and its derivatives, but is bounded with respect to $n$.
Using the notation $\mathcal{O}_u\pr{1}$ to refer to a term that depends on $u$ and its derivatives, but is bounded with respect to $n$ as $n \to \iny$, we have
\begin{align*}
&\LP v_n \pr{1 + \frac{u}{nd}\del_t u }^3
= n \pr{\LP u - \der{u}{t}}
+ \frac{1}{d} \mathcal{O}_u\pr{1} \\
&\pr{y \cdot \gr v_n }\pr{1 + \frac{u}{nd}\del_t u }
=\pr{x, 2\pr{t-T}} \cdot \gr_{\pr{x,t}} u + \frac{u^2}{n d} \der{u}{t} \\
&\pr{ n + \abs{ \gr v_n}^2 }\pr{1 + \frac{u}{nd}\der{ u}{t} }^{2}
=  n \pr{1 + \abs{\gr u}^2} - \frac{2}{d} \brac{\pr{x,t-T} \cdot \gr_{\pr{x,t}} u - u} \der{u}{t}
\end{align*}
so that
\begin{align*}
&\LP v_n \pr{n + \abs{\gr v_n}^2} \pr{1 + \frac{u}{nd} \der{u}{t}}^5 \\
&= \brac{n\pr{\LP u - \der{u}{t}} + \frac{1}{d}\mathcal{O}_u\pr{1}}
\brac{n\pr{1 + \abs{\gr u}^2} - \frac{2}{d} \brac{ {\pr{x,t-T} \cdot \gr_{\pr{x,t}} u} -u }\der{u}{t}}  \\
&= n^2 \pr{\LP u - \der{u}{t}}\pr{1 + \abs{\gr u}^2 }
+ \frac{n}{d} \mathcal{O}_u\pr{1}, 
\end{align*}
and 
\begin{align*}
&\sum_{i,k =1}^d \sum_{j,l=1}^n \der{v_n}{y_{i,j}}\der{v_n}{y_{k,l}} \derm{v_n}{y_{i,j}}{y_{k,l}} \pr{1 + \frac{u}{nd} \der{u}{t}}^5 \\
&= \sum_{i,k =1}^d \sum_{j,l=1}^n 
\pr{\der{u}{x_i} - \der{u}{t} \frac{y_{i,j}}{d} } 
\pr{\der{u}{x_k} - \der{u}{t} \frac{y_{k,l}}{d} } \times \\
&\times \pr{ \derm{u}{x_i}{x_k}
- \der{u}{t} \frac{\de_{ik} \de_{jl}}{d} 
- \derm{u}{x_k}{t} \frac{y_{i,j}}{d} 
- \derm{u}{x_i}{t} \frac{y_{k,l}}{d}  
+ \dert{u}{t}\frac{y_{i,j} y_{k,l}}{d^2}} \\
&+ \frac{1}{nd} \sum_{i,k =1}^d \sum_{j,l=1}^n 
\pr{\der{u}{x_i} - \frac{y_{i,j}}{d} \der{u}{t} } 
\pr{\der{u}{x_k} - \frac{y_{k,l}}{d} \der{u}{t} } \times \\
& \times \pr{ A_{ik} + \frac{1}{d} B_k y_{i,j} + \frac{1}{d} C_i y_{k,l} + \frac{1}{d^2} D y_{i,j} y_{k,l} + + \frac{1}{d} E \de_{ik} \de_{jl}} \\
&= n^2 \sum_{i,k =1}^d \der{u}{x_i}\der{u}{x_k} \derm{u}{x_i}{x_k} 
+ \frac{n}{d}\mathcal{O}_u\pr{1}.
\end{align*}
Since $u$ satisfies \eqref{MCFEq}, then
\begin{align*}
&\brac{\LP v_n\pr{n + \abs{\gr v_n}^2}  - \sum_{i,k =1}^d\sum_{j,l=1}^n \der{v_n}{y_{i,j}}\der{v_n}{y_{k,l}} \derm{v_n}{y_{i,j}}{y_{k,l}} } \pr{1 + \frac{u}{nd} \der{u}{t}}^5 \\
&=  n^2 \brac{\pr{\LP u - \der{u}{t}}\pr{1 + \abs{\gr u}^2 } - \sum_{i,k =1}^d \der{u}{x_i}\der{u}{x_k} \derm{u}{x_i}{x_k} }+ \frac{n}{d} \mathcal{O}_u\pr{1} \\
&= \frac{n}{d} \mathcal{O}_u\pr{1}.
\end{align*}
Therefore, $H_n = h_n$ where $H_n$ is the mean curvature of $\Si_n$ and $h_n : B_{v_n/\sqrt{n}, \sqrt{2 d \pr{T-t}}}^{n\cdot d} \to \R$ satisfies
\begin{align*}
h_n\pr{F_{n,d}\pr{y}} 
&= \frac{ \La_n }{ \sqrt n \, d \pr{1 + \abs{\gr u}^2 - \frac{2}{n d} \brac{ {\pr{x,t-T} \cdot \gr_{\pr{x,t}} u - u} }\der{u}{t}}^{3/2}\pr{1 + \frac{u}{nd} \der{u}{t}}^2},
\end{align*}
where $\La_n = \mathcal{O}_u\pr{1}$.
We may apply Corollary \ref{MSCor} to $\Si_n$ with $r = \sqrt{2 d \pr{T-t}}$ for $t < T$.
As shown in the proof of Corollary \ref{MSCor},
\begin{align*}
& \abs{S^{n \cdot d -1}} \pr{2 d \pr{T-t}}^{\frac{n \cdot d}{2}} \tilde \Theta_0\pr{\sqrt{2 d \pr{T-t}}; \Si_n} \\
&= { n \cdot d \Vol\pr{B_{\sqrt{2 d \pr{T-t}}} \cap \Si_n}
- \int_{\Si_n \cap \set{\abs{w} \le \sqrt{2 d \pr{T-t}}} } h_n w \cdot {\bf \nu}_n} \\
&= \int_{S^{n \cdot d -1}_{v_n, t}} \frac{\abs{y}^2 + \frac{2}{n} v_n y \cdot \gr v_n + \frac{1}{n^2} v_n^2 \abs{\gr v_n}^2 + \frac{1}n \abs{y}^2 \abs{\gr v_n}^2 - \frac 1 n \pr{y \cdot \gr v_n}^2}{\sqrt{\pr{1 + \frac 1 n \abs{\gr v_n}^2}\pr{\abs{y}^2 + \frac 2 n v_n y \cdot \gr v_n + \frac 1 {n^2}v_n^2 \abs{\gr v_n}^2}}} \si_{n \cdot d}^{v_n, t},
\end{align*}
where $\si_{n \cdot d}^{v_n, t}$ is the surface measure of $S^{n \cdot d -1}_{v_n, t} := \set{y \in \R^{n \cdot d} : \abs{y}^2 + \frac{1}{n} v_n\pr{y}^2 = 2 d \pr{T-t} }$.

As $n \to \iny$, $S^{n \cdot d -1}_{v_n, t}$ will approach the sphere is radius $\sqrt{2d\pr{T-t}}$ in $\R^{n \cdot d}$.
However, since it is not in fact a sphere, and the time direction is different from the previous examples, we must repeat the arguments from Section \ref{MT} for this new setting.

We define
$$f_{n,d}^{v_n} : \R^{n \cdot d} \to \R^d$$
so that \eqref{xiDefn} is satisfied for each $i = 1, \ldots, d$.
We can then use $f_{n,d}^{v_n}$ to map the coordinates of $S^{n \cdot d -1}_{v_n, t}$ to coordinates in $M_t$.
In fact, $f_{n,d}^{v_n}$ maps $S^{n \cdot d -1}_{v_n, t}$ to the set $B_{u, nt} := \set{x \in \R^d : \abs{x\pr{t}}^2 + \abs{u\pr{x, t}}^2 \le 2 n d \pr{T-t}}$.

By analogy with the computations given in Section \ref{MT}, if $\vp: \R^d \to \R$ is integrable with respect to $G_{t,n}^u\pr{x} \d{x}$, then
\begin{align}
& \frac{1}{\abs{S_{v_n, t}^{n \cdot d -1}}} \int_{S^{n \cdot d -1}_{v_n, t}} \vp\pr{f_{n,d}^{v_n}\pr{y}} \si_{n \cdot d-1}^{v_n, t}
= \int_{\R^d} \vp\pr{x} G_{t,n}^u\pr{x} \d{x}.
\label{PFTSvn}
\end{align}
where $G_{t, n}^u\pr{x}$ is a measure supported on $B_{u, nt}$ with the property that
\begin{equation}
\lim_{n \to \iny} G_{t, n}^u\pr{x} = \frac{1}{C_{d}} \pr{T-t}^{-\frac{d}{2}} \exp\pr{- \frac{\abs{x}^2 + u^2}{4\pr{T-t}}}.
\label{GnuDefn}
\end{equation}

Set
$$ \Phi_n\pr{t} = C_d \tilde \Theta_0\pr{\sqrt{2d\pr{T-t}}; \Si_n}$$
so that
\begin{align*}
& \frac{\abs{S^{n \cdot d -1}} \pr{2 d \pr{T-t}}^{\frac{n \cdot d}{2}} }{C_d}  \Phi_n\pr{t} 
= \int_{S^{n \cdot d -1}_{v_n, t}} \abs{y}^2 \sqrt{\frac{{1 + \frac{1}n \abs{\gr v_n}^2} }{{\abs{y}^2 + \frac 2 n v_n y \cdot \gr v_n + \frac 1 {n^2}v_n^2 \abs{\gr v_n}^2}}} \si_{n \cdot d}^{v_n, t} \\
&+ \int_{S^{n \cdot d -1}_{v_n, t}} \frac{\frac{1}{n}\brac{2 v_n y \cdot \gr v_n + \frac{1}{n} v_n^2  \abs{\gr v_n}^2  - \pr{y \cdot \gr v_n}^2}}{\sqrt{\pr{1 + \frac 1 n \abs{\gr v_n}^2}\pr{\abs{y}^2 + \frac 2 n v_n y \cdot \gr v_n + \frac 1 {n^2}v_n^2 \abs{\gr v_n}^2}}} \si_{n \cdot d}^{v_n, t}.
\end{align*}
It follows from \eqref{PFTSvn} that
\begin{align*}
 \frac{\abs{S^{n \cdot d -1}} \pr{2 d \pr{T-t}}^{\frac{n \cdot d -1}{2}}} {C_d \abs{S_{v_n, t}^{n \cdot d -1}}}\Phi_n\pr{t} 
 &= \int_{\R^d} \sqrt{1 + \abs{\gr u}^2}\pr{1 + \mathcal{O}_u\pr{\frac 1 n}} G_{t,n}^u\pr{x} \d{x}
\end{align*}
and therefore,
\begin{align}
\lim_{n \to \iny} \Phi_n\pr{t} 
 &= \int_{\R^d} \pr{T- t}^{-\frac{d}{2}} \exp\pr{- \frac{\abs{x}^2 + u^2}{4\pr{T-t}}} \sqrt{1 + \abs{\gr u}^2} \d{x} \nonumber \\
& = \int_{M_t} \pr{T-t}^{-\frac{d}{2}} \exp\pr{- \frac{\abs{z}^2}{4\pr{T-t}}}  
 =  \vartheta\pr{t; M_t}.
 \label{ThetaLim}
\end{align}
By the computations from Corollary \ref{MSCor},
\begin{align*}
&\frac{\abs{S^{n \cdot d -1}} \pr{2 d \pr{T-t}}^{\frac{n \cdot d }{2}}}{d} \Phi_n^\prime\pr{t}
= \frac{C_d}{\sqrt n}\int_{S^{n \cdot d -1}_{v_n, t}}  \frac{ h_n \pr{v_n - y \cdot \gr v_n} }{ \sqrt{\abs{y}^2 + \frac 2 n v_n y \cdot \gr v_n + \frac 1 n v_n^2 \frac 1 n \abs{\gr v_n}^2} } \si_{n \cdot d}^{v_n, t} \\
&- \frac{C_d}{2 \pr{T-t} } \int_{S^{n \cdot d -1}_{v_n, t}} \frac{ \pr{v_n- y \cdot \gr v_n}^2 }{\sqrt{\pr{1 + \frac 1 n \abs{\gr v_n}^2}\pr{\abs{y}^2 + \frac 2 n v_n y \cdot \gr v_n + \frac 1 n v_n^2 \frac 1 n \abs{\gr v_n}^2 }}} \si_{n \cdot d}^{v_n, t} .
\end{align*}
Using the expressions from above along with equation \eqref{PFTSvn}, we have
\begin{align*}
& \frac{\abs{S^{n \cdot d -1}} \pr{2 d \pr{T-t}}^{\frac{n \cdot d -1}{2}}}{C_d \abs{S_{v_n, t}^{n \cdot d -1}}} \Phi_n^\prime\pr{t} \\
&=  \frac{1}{2 n d \pr{T-t} } \int_{\R^d} \frac{ \La_n \brac{u- \pr{x, 2\pr{t-T}} \cdot \gr_{\pr{x,t}} u}  }{ \pr{1 + \abs{\gr u}^2}^{\frac 3 2}} \pr{1 + \mathcal{O}_u\pr{\frac 1 n}}  G_{t,n}^u\pr{x} \d{x} \\
&- \frac{1}{4 \pr{T-t}^2} \int_{\R^d} \frac{ \brac{u- \pr{x, 2\pr{t-T}} \cdot \gr_{\pr{x,t}} u}^2 }{ 1 +  \abs{\gr u}^2 } \sqrt{ 1 +  \abs{\gr u}^2 } \pr{1 + \mathcal{O}_u\pr{\frac 1 n}} G_{t,n}^u\pr{x} \d{x}
\end{align*}
so that
\begin{align*}
\lim_{n \to \iny} \Phi_n^\prime\pr{t}
&= - \int_{\R^d} \frac{ \brac{\frac{x \cdot \gr u - u}{2\pr{T-t}} - \der{u}{t} }^2 }{ 1 +  \abs{\gr u}^2 } \sqrt{ 1 +  \abs{\gr u}^2 } \pr{T- t}^{-\frac{d}{2}} \exp\pr{- \frac{\abs{x}^2 + u^2}{4\pr{T-t}}} \d{x}.
\end{align*}
Since ${\bf H} = - H {\bf \nu}$ and $z^\perp = \pr{z \cdot {\bf \nu}} \nu$, then
\begin{align*}
{\bf H} + \frac{z^\perp}{2\pr{T-t}}
&= \pr{\frac{ z \cdot {\bf \nu} }{2\pr{T-t}} - H } {\bf \nu}
= \pr{ \frac{ x \cdot \gr u - u }{2\pr{T-t}} -  \der{u}{t}} \frac{\bf \nu}{\sqrt{1 + \abs{\gr u}^2}},
\end{align*}
where we have used \eqref{MCFEq} and \eqref{norVec}.
Since $\bf \nu$ has unit length, then
\begin{align*}
\lim_{n \to \iny} \Phi_n^\prime\pr{t}
&= \int_{M_t} \abs{{\bf H} + \frac{z^\perp}{2\pr{T-t}}}^2 \pr{T-t}^{-\frac{d}{2}} \exp\pr{- \frac{\abs{z}^2}{4\pr{T-t}}}.
\end{align*}
In combination with \eqref{ThetaLim}, the conclusion of the theorem follows.
\end{proof}

\noindent
{\small {\bf Acknowledgements}
This project was started at the University of Minnesota where the author worked as a postdoc.
The author is extremely grateful to her postdoctoral mentor, Vladimir Sverak, for suggesting this project and for his support and guidance.
The author would also like to express her gratitude to Luis Escauriaza for his helpful comments and for suggesting a section devoted to the two-phase monotonicity formulas.
This project was supported in part by PSC-CUNY Award \#69078-00 47.}

\noindent
{\small {\bf Conflict of Interest}
 The author declares that they have no conflict of interest.}


\begin{thebibliography}{10}
\providecommand{\url}[1]{{#1}}
\providecommand{\urlprefix}{URL }
\expandafter\ifx\csname urlstyle\endcsname\relax
  \providecommand{\doi}[1]{DOI~\discretionary{}{}{}#1}\else
  \providecommand{\doi}{DOI~\discretionary{}{}{}\begingroup
  \urlstyle{rm}\Url}\fi

\bibitem{Al79}
Almgren Jr., F.J.: Dirichlet's problem for multiple valued functions and the
  regularity of mass minimizing integral currents.
\newblock In: Minimal submanifolds and geodesics ({P}roc. {J}apan-{U}nited
  {S}tates {S}em., {T}okyo, 1977), pp. 1--6. North-Holland, Amsterdam-New York
  (1979)

\bibitem{ACF84}
Alt, H.W., Caffarelli, L.A., Friedman, A.: Variational problems with two phases
  and their free boundaries.
\newblock Trans. Amer. Math. Soc. \textbf{282}(2), 431--461 (1984).
\newblock \doi{10.2307/1999245}.
\newblock \urlprefix\url{http://dx.doi.org/10.2307/1999245}

\bibitem{ABG81}
Amrein, W.O., Berthier, A.M., Georgescu, V.: {$L^{p}$}-inequalities for the
  {L}aplacian and unique continuation.
\newblock Ann. Inst. Fourier (Grenoble) \textbf{31}(3), vii, 153--168 (1981).
\newblock \urlprefix\url{http://www.numdam.org/item?id=AIF_1981__31_3_153_0}

\bibitem{Caf93}
Caffarelli, L.A.: A monotonicity formula for heat functions in disjoint
  domains.
\newblock In: Boundary value problems for partial differential equations and
  applications, \emph{RMA Res. Notes Appl. Math.}, vol.~29, pp. 53--60. Masson,
  Paris (1993)

\bibitem{CK98}
Caffarelli, L.A., Kenig, C.E.: Gradient estimates for variable coefficient
  parabolic equations and singular perturbation problems.
\newblock Amer. J. Math. \textbf{120}(2), 391--439 (1998).
\newblock
  \urlprefix\url{http://muse.jhu.edu/journals/american_journal_of_mathematics/v120/120.2caffarelli.pdf}

\bibitem{CM04}
Colding, T.H., Minicozzi II, W.P.: An excursion into geometric analysis.
\newblock In: Surveys in differential geometry. {V}ol. {IX}, Surv. Differ.
  Geom., IX, pp. 83--146. Int. Press, Somerville, MA (2004).
\newblock \doi{10.4310/SDG.2004.v9.n1.a4}.
\newblock \urlprefix\url{http://dx.doi.org/10.4310/SDG.2004.v9.n1.a4}

\bibitem{CM11}
Colding, T.H., Minicozzi II, W.P.: A course in minimal surfaces, \emph{Graduate
  Studies in Mathematics}, vol. 121.
\newblock American Mathematical Society, Providence, RI (2011)

\bibitem{E00}
Escauriaza, L.: Carleman inequalities and the heat operator.
\newblock Duke Math. J. \textbf{104}(1), 113--127 (2000).
\newblock \doi{10.1215/S0012-7094-00-10415-2}.
\newblock \urlprefix\url{http://dx.doi.org/10.1215/S0012-7094-00-10415-2}

\bibitem{E91}
Evans, L.C.: Partial regularity for stationary harmonic maps into spheres.
\newblock Arch. Rational Mech. Anal. \textbf{116}(2), 101--113 (1991).
\newblock \doi{10.1007/BF00375587}.
\newblock \urlprefix\url{http://dx.doi.org/10.1007/BF00375587}

\bibitem{FH76}
Friedland, S., Hayman, W.K.: Eigenvalue inequalities for the {D}irichlet
  problem on spheres and the growth of subharmonic functions.
\newblock Comment. Math. Helv. \textbf{51}(2), 133--161 (1976)

\bibitem{GL86}
Garofalo, N., Lin, F.H.: Monotonicity properties of variational integrals,
  {$A_p$} weights and unique continuation.
\newblock Indiana Univ. Math. J. \textbf{35}(2), 245--268 (1986).
\newblock \doi{10.1512/iumj.1986.35.35015}.
\newblock \urlprefix\url{http://dx.doi.org/10.1512/iumj.1986.35.35015}

\bibitem{GL87}
Garofalo, N., Lin, F.H.: Unique continuation for elliptic operators: a
  geometric-variational approach.
\newblock Comm. Pure Appl. Math. \textbf{40}(3), 347--366 (1987).
\newblock \doi{10.1002/cpa.3160400305}.
\newblock \urlprefix\url{http://dx.doi.org/10.1002/cpa.3160400305}

\bibitem{Hu90}
Huisken, G.: Asymptotic behavior for singularities of the mean curvature flow.
\newblock J. Differential Geom. \textbf{31}(1), 285--299 (1990).
\newblock \urlprefix\url{http://projecteuclid.org/euclid.jdg/1214444099}

\bibitem{KT01}
Koch, H., Tataru, D.: Carleman estimates and unique continuation for
  second-order elliptic equations with nonsmooth coefficients.
\newblock Comm. Pure Appl. Math. \textbf{54}(3), 339--360 (2001).
\newblock \doi{10.1002/1097-0312(200103)54:3<339::AID-CPA3>3.0.CO;2-D}.
\newblock
  \urlprefix\url{http://dx.doi.org/10.1002/1097-0312(200103)54:3<339::AID-CPA3>3.0.CO;2-D}

\bibitem{P02}
Perelman, G.: The entropy formula for the {R}icci flow and its geometric
  applications (2002).
\newblock ArXiv:math.DG/0211159

\bibitem{P96}
Poon, C.C.: Unique continuation for parabolic equations.
\newblock Comm. Partial Differential Equations \textbf{21}(3-4), 521--539
  (1996).
\newblock \doi{10.1080/03605309608821195}.
\newblock \urlprefix\url{http://dx.doi.org/10.1080/03605309608821195}

\bibitem{SU82}
Schoen, R., Uhlenbeck, K.: A regularity theory for harmonic maps.
\newblock J. Differential Geom. \textbf{17}(2), 307--335 (1982).
\newblock \urlprefix\url{http://projecteuclid.org/euclid.jdg/1214436923}

\bibitem{S84}
Schoen, R.M.: Analytic aspects of the harmonic map problem.
\newblock In: Seminar on nonlinear partial differential equations ({B}erkeley,
  {C}alif., 1983), \emph{Math. Sci. Res. Inst. Publ.}, vol.~2, pp. 321--358.
  Springer, New York (1984).
\newblock \doi{10.1007/978-1-4612-1110-5_17}.
\newblock \urlprefix\url{http://dx.doi.org/10.1007/978-1-4612-1110-5_17}

\bibitem{St88}
Struwe, M.: On the evolution of harmonic maps in higher dimensions.
\newblock J. Differential Geom. \textbf{28}(3), 485--502 (1988).
\newblock \urlprefix\url{http://projecteuclid.org/euclid.jdg/1214442475}

\bibitem{Sv11}
Sverak, V.: {T}heory of {PDE} {C}ourse {N}otes, {N}o. 55 (2011).
\newblock Http://www.math.umn.edu/~sverak/course-notes.pdf

\bibitem{TaoB}
Tao, T.: 285{G}, {L}ecture 9: {C}omparison geometry, the high-dimensional
  limit, and {P}erelman reduced volume (2008).
\newblock
  Https://terrytao.wordpress.com/2008/04/27/285g-lecture-9-comparison-geometry-the-high-dimensional-limit-and-perelman-reduced-volume/

\bibitem{Ta99}
Tataru, D.: Carleman estimates, unique continuation and applications (1999).
\newblock Http://math.berkeley.edu/~tataru/papers/ucpnotes.ps

\bibitem{W23}
Wiener, N.: Differential space.
\newblock J. Math. and Phys. \textbf{2}, 132--174 (1923)

\end{thebibliography}
\end{document}